\def\@cite#1#2{{\m@th\upshape\bfseries%
[{#1\if@tempswa{\m@th\upshape\mdseries, #2}\fi}]}}
\theoremstyle{plain}
\newtheorem{theorem}{Theorem}[section]
\newtheorem{corollary}[theorem]{Corollary}
\newtheorem{proposition}[theorem]{Proposition}
\newtheorem{lemma}[theorem]{Lemma}
\theoremstyle{definition}
\newtheorem{definition}[theorem]{Definition}
\newtheorem{example}[theorem]{Example}
\newtheorem{remark}[theorem]{Remark}
\theoremstyle{remark}
\newcommand{\bbC}{{\mathbb{C}}}
\newcommand{\bbI}{{\mathbb{I}}}
\newcommand{\bbN}{{\mathbb{N}}}
\newcommand{\bbQ}{{\mathbb{Q}}}
\newcommand{\bbR}{{\mathbb{R}}}
\newcommand{\bbT}{{\mathbb{T}}}
\newcommand{\bbZ}{{\mathbb{Z}}}
\newcommand{\A}{{\mathcal{A}}}
\newcommand{\B}{{\mathcal{B}}}
\newcommand{\C}{{\mathcal{C}}}
\newcommand{\G}{{\mathcal{G}}}
\renewcommand{\H}{{\mathcal{H}}}
\newcommand{\I}{{\mathcal{I}}}
\newcommand{\J}{{\mathcal{J}}}
\newcommand{\K}{{\mathcal{K}}}
\renewcommand{\L}{{\mathcal{L}}}
\newcommand{\T}{{\mathcal{T}}}
\newcommand{\X}{{\mathcal{X}}}
\newcommand{\fA}{{\mathfrak{A}}}
\newcommand{\fG}{{\mathfrak{G}}}
\renewcommand{\phi}{\varphi}
\newcommand{\upchi}{{\raise.35ex\hbox{\ensuremath{\chi}}}}
\newcommand{\Alg}{\operatorname{Alg}}
\newcommand{\Ad}{\operatorname{Ad}}
\newcommand{\Aut}{\operatorname{Aut}}
\newcommand{\dg}{\operatorname{diag}}
\newcommand{\id}{{\operatorname{id}}}
\newcommand{\Rad}{\operatorname{Rad}}
\newcommand{\spn}{\operatorname{span}}
\newcommand{\rt}{\operatorname{rt}}
\newcommand\diag{\mathop{\rm diag}}
\newcommand{\lt}{\operatorname{lt}}
\newcommand{\ca}{\mathrm{C}^*}
\newcommand{\cenv}{\mathrm{C}^*_{\text{env}}}
\newcommand{\cmax}{\mathrm{C}^*_{\text{max}}}
\newcommand{\sot}{\textsc{sot}}
\newenvironment{sbmatrix}{\left[\begin{smallmatrix}}
{\end{smallmatrix}\right]}
\newcommand\cpf{\rtimes_{\alpha}\, {\mathcal{G}}}
\newcommand\cps{\rtimes_{\alpha}\, {\Sigma}}
\begin{document}

\title[Takai duality]{Crossed Products of operator algebras: applications of Takai Duality}

\author[E.G. Katsoulis]{Elias~G.~Katsoulis}
\address {Department of Mathematics
\\East Carolina University\\ Greenville, NC 27858\\USA}
\email{katsoulise@ecu.edu}

\author[C. Ramsey]{Christopher~Ramsey}
\address {Department of Mathematics
\\University of Manitoba\\ Winnipeg, MB \\Canada}
\email{Christopher.Ramsey@umanitoba.ca}

\thanks{2010 {\it  Mathematics Subject Classification.}
46L07, 46L08, 46L55, 47B49, 47L40, 47L65}
\thanks{{\it Key words and phrases:} crossed product, Jacobson Radical, operator algebra, semicrossed product, Takai duality}

\maketitle

%%%%%%%%%%%%%%%%
\begin{abstract}
Let $(\G, \Sigma)$ be an ordered abelian group with Haar measure $\mu$, let  $(\A, \G, \alpha)$ be a dynamical system and let $\A\rtimes_{\alpha} \Sigma $ be the associated semicrossed product. Using Takai duality we establish a stable isomorphism
\[
\A\rtimes_{\alpha} \Sigma \sim_{s} \big(\A \otimes \K(\G, \Sigma, \mu)\big)\rtimes_{\alpha\otimes \Ad \rho} \G,
\]
where $\K(\G, \Sigma, \mu)$ denotes the compact operators in the CSL algebra $\Alg\L(\G, \Sigma, \mu)$ and $\rho$ denotes the right regular representation of $\G$.
We also show that there exists a complete lattice isomorphism between the $\hat{\alpha}$-invariant ideals of $\A\rtimes_{\alpha} \Sigma$ and the $(\alpha\otimes \Ad \rho)$-invariant ideals of $\A \otimes \K(\G, \Sigma, \mu)$. 

Using Takai duality we also continue our study of the Radical for the crossed product of an operator algebra and we solve open problems stemming from the earlier work of the authors. Among others we show that the crossed product of a radical operator algebra by a compact abelian group is a radical operator algebra. We also show that the permanence of semisimplicity fails for crossed products by $\bbR$. A final section of the paper is devoted to the study of radically tight dynamical systems, i.e., dynamical systems $(\A, \G, \alpha)$ for which the identity $\Rad(\A\cpf)=(\Rad\A) \cpf$ persists. A broad class of such dynamical systems is identified.

\end{abstract}

\section{introduction} 

In this paper we continue our study of crossed products of (mostly non-selfadjoint) operator algebras begun with our monograph \cite{KR}. The main objectives of the paper is to strengthen ties and establish new connections between our theory of crossed products and the well-established theory of semicrossed products. Using these connections we broaden our understanding for various topics of investigation in both theories, including semisimplicity and the structure of invariant ideals by the dual action.

Starting with work of Arveson in the sixties \cite{Arv3, ArvJ}, the study of semicrossed products by $\bbZ^+$ (or other discrete semigroups) has been a central topic of investigation in operator algebra theory and has produced a steady stream of important results \cite{DFK,  DavKatCr, DavKatAn, DavKatMem,  KakKatJFA1, Pet, Pet2}. Quite recently, the authors of the present paper started studying crossed products of operator algebras \cite{KR}, in connection with several problems in both selfadjoint and non-selfadjoint operator algebra theory. Throughout \cite{KR}, it was pointed out that certain semicrossed product algebras of the form $\A \rtimes_{\alpha} \bbZ^+$ were actually isomorphic to crossed products of operator algebras by the action of the full group $\bbZ$; see for instance \cite[Problem 6]{KR}.  This raises the natural question as to how and to what extend these two classes of operator algebras are related to each other. The non-selfadjoint Takai duality of \cite[Theorem 4.4]{KR} certainly guarantees that any operator algebra $\A$ admitting an action $\alpha$ by $\bbT$ is stably isomorphic to a crossed product of the form $\B\rtimes_{\beta}\bbZ$, with $\B=\A \rtimes_{\alpha} \bbT$ and $\beta=\hat{\alpha}$. However, identifying a familiar representative of $\A \rtimes_{\alpha} \bbT$ is a formidable task and it is an even more difficult task to identify how the dual action manifests on that representative of $\A \rtimes_{\alpha} \bbT$. Nevertheless, here we are able to establish that up to stable isomorphism any semicrossed product $\A \rtimes_{\alpha} \bbZ^+$ can be thought of as a crossed product of a very concrete operator algebra by a very explicit action of $\bbZ$. (Actually our results work in much greater generality and include crossed products by other natural semigroups, including $\bbR^+$.) Indeed, in Theorem~\ref{stable isom} we show that for any ordered abelian group $(\G, \Sigma)$ (see below for definitions) and dynamical system  $(\A, \G, \alpha)$ there exists a stable isomorphism
\[
\A\rtimes_{\alpha} \Sigma \sim_{s} \big(\A \otimes \K(\G, \Sigma, \mu)\big)\rtimes_{\alpha\otimes \Ad \rho} \G.
\]
Here $\rho$ denotes the right regular representation of $\G$ and $\K(\G, \Sigma, \mu)$ the compact operators in the CSL algebra $\Alg\L(\G, \Sigma, \mu)$.

Using Theorem~\ref{stable isom} we now revisit the concept of semisimplicity for the crossed product of an operator algebra and answer a problem that was left open in \cite{KR}. In \cite[Problem 4]{KR} we asked whether the semisimplicity of $\A$ implies the semisimplicity of $\A \rtimes_{\alpha} \bbR$ and vice versa. This problem was motivated by the fact that if $\A$ is a semisimple operator algebra, then $\A \rtimes_{\alpha} \G$ is semisimple, for any \textit{discrete} dynamical system $(\A, \alpha, \G)$. As it turns out, Example~\ref{Problem 5} shows that the answer is negative not only for crossed products by $\bbR$ but also for any non-discrete ordered abelian group $\G$. 

Section~\ref{invariant section} contains several applications. First in Theorem~\ref{Peters param} we give a complete lattice parametrization of all ideals of the semicrossed product $\A \cps$ which are invariant by the dual action. We do this in an indirect way using Theorem~\ref{stable isom} and the dynamical system $( \A\otimes  \K ( \G , \Sigma, \mu ), \G, \alpha \otimes\Ad \rho)$ appearing in that theorem. This works for \textit{any} abelian group admitting an ordered structure, not just discrete ones, and sheds new light on an old result of Peters \cite[III.5. Theorem]{Pet2}. A key step in the proof of Theorem~\ref{Peters param} is a complete characterization of all ideals of the crossed product $\A \cps$ which are invariant by the dual action; this generalizes an old result of Gootman and Lazar \cite{GL1}. In Section~\ref{invariant section} we also (partially) solve another problem from \cite{KR} by showing that the diagonal of a crossed product is what it oughts to be, i.e., $\dg\big(\A\cpf \big)=\dg(\A) \cpf$, provided that $\G$ is compact and abelian, (see Theorem~\ref{compact radical}.) In Section~\ref{invariant section} we also show that the crossed product of a radical operator algebra by a compact group is again a radical algebra. This leads naturally to the concept of radical tightness which is explored in the last section.

Section~\ref{Section tight} is occupied with a detailed study of the Jacobson radical for a crossed product. A dynamical system $(\A, \G, \alpha)$ is called radically tight if $\Rad(\A \rtimes_\alpha \G) = (\Rad \A) \rtimes_\alpha \G$. This condition need not happen as was shown in \cite[Example 6.8]{KR} and will be further shown in the semisimplicity discussion of Section~\ref{Section stable}. In Theorem~\ref{tight} we establish that every dynamical system consisting of a strongly maximal TUHF algebra and a discrete abelian group is radically tight. The proof of Theorem~\ref{tight} is technical and among others, it requires the resolution of an old problem stemming from the work of Hudson.  In \cite{Hudson} Hudson studied various notions of a radical radical for a TAF algebra with the linkless ideal being contained in all of them and Jacobson Radical being the largest. In Theorem~\ref{TUHFRadical} we show that for any strongly maximal TUHF algebra the linkless ideal and the Jacobson Radical coincide. This validates Hudson's intuition who remarked in \cite[pg. 228]{Hudson} that for all familiar examples of TAF algebras the two ideals seemed to coincide and yet he was not able to provide any specific results in that direction.

\section{Stable isomorphisms of semicrossed products} \label{Section stable}

Let $\G$ be a \textit{second countable} locally compact abelian group with Haar measure $\mu$ (all groups appearing in this paper are abelian).  Let $\Sigma \subseteq \G$ be a \textit{cone} in $\G$ satisfying
\begin{itemize} 
\item[(i)] $\Sigma\cap \Sigma^{-1} = \{1\}$
\item[(ii)] $\Sigma  \cdot \Sigma \subseteq \Sigma$
\item[(iii)] $\Sigma$ equals the closure of its interior. 
\end{itemize}
We say that the pair $(\G ,\Sigma)$ forms an  \textit{ordered abelian group}. Using the ordered group structure $(\G, \Sigma)$ and a $\sigma$-finite Borel measure $m$ on $\G$, we define a commutative subspace lattice $\L(\G, \Sigma, m)$ as follows. 

Consider the partial order $\leq$ in $\G$ to mean $s \leq t$ only when $s^{-1}t \in \Sigma$. A Borel set $ E\subseteq \G$ is said to be increasing if for any $ s \in E$ and $t \in \G$, $s \leq t$ implies $t \in E$. This is a standard order in the terminology of \cite{Arv1}. 

Let $P_E$ be the projection on $L^2(\G, m)$ given by multiplication by the characteristic function of $E$. Then
\[
\L(\G, \Sigma, m) \equiv \{ P_E\mid E \subseteq \G \mbox{ increasing}\}
\]
is a commutative subspace lattice, which up to unitary equivalence depends only on the equivalence class of the measure $m$. 

\begin{example}  \label{Z-ordered}
$\G = \bbZ$, $\Sigma=\bbZ^+$, $m$ the counting measure. 
By identifying closed subspaces with their orthogonal projections, it is easy to see that $\L(\G, \Sigma, m)$ is unitarily equivalent to the \textit{discrete $\bbZ$-ordered nest}
\[
\big\{  P_n \mid P_n = [\{e_i,  n \leq i <\infty\}], n \in \bbZ \big\},
\]
where $\{e_n\}_{n\in \bbZ}$ is the standard orthonormal basis of $l^2(\bbZ)$.
\end{example} 

\begin{example} \label{Voltera nest} $\G = \bbR$, $\Sigma=\bbR^+$, $m$ the Lebesgue measure. 
In this case $\L(\G, \Sigma, m)$ is unitarily equivalent to the \textit{Volterra nest}
\[
\{L^2([t , +\infty))\mid t \in \bbR\}.
\]
\end{example} 

\begin{example}
$\G = \bbR^2$, $\Sigma=\bbR^+\times \bbR^+$, $m$ the Lebesgue area measure. 
In this case $\L(\G, \Sigma, m)$ is the commutative subspace lattice generated by the nests
\[
\{L^2([t , +\infty)\times \bbR) \mid t \in \bbR\} \mbox{ and } \{ L^2(\bbR \times [s , +\infty) )\mid s \in \bbR\}.
\]
\end{example}

 In order to utilize results on finite measures, we find it convenient to consider the finite measure $m \equiv hd\mu$, where $h \in L^1(\G)\cap L^2(\G)$ is a suitable positive continuous function so that $m\equiv \mu$. The invariance of the Haar measure under translations implies that the function 
\[
\G \ni s \longmapsto m(s E), \quad E \subseteq \G\  \mbox{ increasing}
\] is continuous. Hence the measure $m$ is $\Sigma$-continuous  in the terminology of \cite[Definition 6.1]{Arv2}  and so $\mu(\vartheta E) = m(\vartheta E)=0$, for all increasing sets $E$ (and their complements) by \cite[Proposition 6.2]{Arv2}. 

If $\L$ is a commutative subspace lattice, then $\Alg \L$ will denote its CSL algebra, i.e., the collection of all operators leaving invariant (the range of) each element of $\L$. The study of CSL algebras was initiated by Arveson in his seminal paper \cite{Arv1}. One of the important topics of investigation in that theory revolves around the density of the compact operators. It is known that the $\sot$-density (or $w^*$-density) of the span of rank-one operators in $\Alg \L$ (rank-one density) is equivalent to $\L$ being completely distributive~\cite{LL}. The complete distributivity of $\L$ is also equivalent to the density of the Hilbert-Schmidt operators in $\Alg \L$ \cite{HLM}. In both cases one can choose an approximate unit consisting of contractions~\cite{DP}. Deciding when the compact operators are dense in a CSL algebra remains to date an open problem. 

If $(\G ,\Sigma)$ is an ordered group and $m$ a $\sigma$-finite measure on $\G$, then the collection of all compact operators in $\Alg  \L(\G, \Sigma, m)$ will be denoted as $\K(\G, \Sigma, m)$. In the case where $m$ is finite and $\Sigma$-continuous, Laurie ~\cite{Laurie} has shown that the lattice $ \L(\G, \Sigma, m) $ is completely distributive and so the span of rank-one operators is dense in $\K(\G, \Sigma, m)$ (and strongly dense in $\Alg  \L(\G, \Sigma, \mu)$). By considering the measure $ m \equiv hd\mu \sim \mu$ discussed earlier, the same is true for  $\L(\G, \Sigma, \mu)$ and $\K(\G, \Sigma, \mu)$.

\begin{example} \label{example alg} Let $\G = \bbZ$, $\Sigma=\bbZ^+$ and $m$ be the counting measure (Example \ref{Z-ordered}). In that case $\Alg \L(\G, \Sigma, m)$ consists of all (bounded) lower triangular infinite matrices 
$A = [\{a_{i j}\}]_{i, j \in \bbZ}$ with entries in $\bbC$. It is easy to see that $$A = [\{a_{i j}\}] \in \K(\G, \Sigma, m)$$ provided that the finite truncations $[\{a_{i j}\}]_{i, j =-n}^{n}$, $n \in \bbZ^+$, approximate $A$ in norm.

In the case where $\G = \bbR$, $\Sigma=\bbR^+$ and $m$ is the Lebesgue measure (Example~\ref{Voltera nest}), there is no matricial description for either $\Alg \L(\G, \Sigma, m)$ or $\K (\G, \Sigma, m)$. Notice however that the Hilbert-Schmidt operators in\break $\Alg \L(\G, \Sigma, m)$ can be described as the integral operators on $L^2(\bbR, m)$ whose kernel functions are square integrable and supported on 
\[
\{(s, t) \mid s\geq t, \, \, s, t \in\bbR\}.
\]
Our next result elaborates on that theme. 
\end{example}

\begin{lemma} \label{integral op}
The collection of all Hilbert-Schmidt integral operators of the form
\[
Tf(s)  = \int K(s, t)f(t ) d \mu(t), \,\, s \in \G, f \in L^2(\G),
\]
with $ K \in C_c(\G \times \G)$ supported on the graph 
\[
(\G, \leq)\equiv \{(s, t)\mid s\geq t, \, \, s, t \in \\G\},
\]
forms a dense subset of $\K(\G, \Sigma, \mu)$.
\end{lemma}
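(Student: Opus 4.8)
The plan is to reduce the statement, via the norm-density of the span of rank-one operators in $\K(\G,\Sigma,\mu)$ recalled above (a consequence of the complete distributivity of $\L(\G,\Sigma,\mu)$), to approximating a single rank-one operator $\xi\otimes\eta^{*}\in\Alg\L(\G,\Sigma,\mu)$ by integral operators of the stated form. Throughout write $D\equiv(\G,\leq)=\{(s,t)\mid s\geq t\}$. First I would check that the set $\I$ of all integral operators with kernel $K\in C_{c}(\G\times\G)$ supported on $D$ is a linear subspace of $\K(\G,\Sigma,\mu)$: each such operator is Hilbert--Schmidt, hence compact, and it leaves invariant every $P_{E}\in\L(\G,\Sigma,\mu)$, because for an increasing Borel set $E$ the kernel $\chi_{\G\setminus E}(s)K(s,t)\chi_{E}(t)$ of $P_{E}^{\perp}TP_{E}$ vanishes identically (if $K(s,t)\neq0$ then $t\leq s$, and $t\in E$ with $E$ increasing forces $s\in E$). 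Since $\overline{\I}$ is then a closed subspace of $\K(\G,\Sigma,\mu)$, it suffices to prove that every rank-one operator in $\Alg\L(\G,\Sigma,\mu)$ lies in $\overline{\I}$.

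So fix $R=\xi\otimes\eta^{*}\in\Alg\L(\G,\Sigma,\mu)$ and let $F_{\xi},F_{\eta}\subseteq\G$ be the essential supports of $\xi,\eta$. The first key point is that membership of $R$ in $\Alg\L(\G,\Sigma,\mu)$ forces $F_{\xi}\times F_{\eta}\subseteq D$. Indeed, if some $(s_{0},t_{0})\in F_{\xi}\times F_{\eta}$ had $t_{0}^{-1}s_{0}\notin\Sigma$, then since $\Sigma$ is closed and multiplication is continuous one can pick neighbourhoods $V_{s_{0}}\ni s_{0}$ and $V_{t_{0}}\ni t_{0}$ with $V_{t_{0}}^{-1}V_{s_{0}}\cap\Sigma=\varnothing$; the set $E=V_{t_{0}}\Sigma$ is increasing, contains $V_{t_{0}}$ (so $P_{E}\eta\neq0$) and is disjoint from $V_{s_{0}}$ (so $P_{E}^{\perp}\xi\neq0$), whence $P_{E}^{\perp}RP_{E}=(P_{E}^{\perp}\xi)\otimes(P_{E}\eta)^{*}\neq0$, a contradiction. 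Next, replacing $\xi,\eta$ by $\chi_{K_{n}}\xi,\chi_{K_{n}}\eta$ for an exhausting sequence of compact sets $K_{n}\uparrow\G$ and noting that the resulting operators converge to $R$ in norm while still satisfying the support condition, I may assume $F_{\xi}$ and $F_{\eta}$ are compact.

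The central device is then a translation that pushes the configuration into the \emph{interior} of $D$. For $\sigma\in\operatorname{int}\Sigma$ close to the identity set $\xi_{\sigma}(\cdot)=\xi(\cdot\,\sigma^{-1})$; by strong continuity of translation on $L^{2}(\G)$ one has $\xi_{\sigma}\otimes\eta^{*}\to R$ in norm as $\sigma\to1$, and $\operatorname{ess\,supp}\xi_{\sigma}=F_{\xi}\sigma$. Crucially $(F_{\xi}\sigma)\times F_{\eta}\subseteq\operatorname{int}D$, since for $s\in F_{\xi}$, $t\in F_{\eta}$ we have $t^{-1}(s\sigma)=(t^{-1}s)\sigma\in\Sigma\cdot\operatorname{int}\Sigma\subseteq\operatorname{int}\Sigma$ (a translate $x\operatorname{int}\Sigma$, $x\in\Sigma$, is an open subset of $\Sigma$). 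Now $(F_{\xi}\sigma)\times F_{\eta}$ is a compact subset of the open set $\operatorname{int}D$, so the generalized tube lemma together with local compactness of $\G$ produces open, relatively compact sets $G_{\xi}\supseteq F_{\xi}\sigma$ and $G_{\eta}\supseteq F_{\eta}$ with $\overline{G_{\xi}}\times\overline{G_{\eta}}\subseteq D$. Approximating $\xi_{\sigma}$ in $L^{2}$ by some $\xi'\in C_{c}(G_{\xi})$ and $\eta$ by some $\eta'\in C_{c}(G_{\eta})$ (possible since $C_{c}$ of an open set is $L^{2}$-dense), the operator $\xi'\otimes(\eta')^{*}$ has kernel $(s,t)\mapsto\xi'(s)\overline{\eta'(t)}\in C_{c}(\G\times\G)$ supported on $\overline{G_{\xi}}\times\overline{G_{\eta}}\subseteq D$, hence lies in $\I$, and it approximates $\xi_{\sigma}\otimes\eta^{*}$, and therefore $R$, in norm. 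This establishes the claimed density.

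The step I expect to be the main obstacle is precisely this passage to $\operatorname{int}D$: the product $F_{\xi}\times F_{\eta}$ of essential supports of a rank-one operator in $\Alg\L(\G,\Sigma,\mu)$ generically meets the ``diagonal'' boundary of $D$ (take $F_{\xi}=F_{\eta}=\{a\}$ in the Volterra case of Example~\ref{Voltera nest}), so no product neighbourhood of it lies inside $D$ and one cannot simply smooth the kernel $\xi(s)\overline{\eta(t)}$ in place; translating by an interior element of $\Sigma$ is exactly what creates the room needed for the tube lemma, and one must check — as the support bookkeeping above ensures — that this does not destroy membership in $\Alg\L(\G,\Sigma,\mu)$. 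A minor additional point is to keep the identification between the Haar measure $\mu$ and the equivalent finite measure $m\equiv h\,d\mu$ used to realize $\L(\G,\Sigma,\mu)$ straight; this is routine, since the relevant multiplication projections $P_{E}$ are unaffected by it.
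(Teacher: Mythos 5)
Your proof is correct, and after the shared first move (reducing, via the rank-one density in $\K(\G,\Sigma,\mu)$ that comes from complete distributivity, to approximating a single rank-one operator of $\Alg\L(\G,\Sigma,\mu)$) it takes a genuinely different route from the paper. The paper disposes of the rank-one step by quoting the structure theory of CSL algebras from Davidson's book: Lemma 23.3 there expresses the vectors of a rank-one $e\otimes f\in\Alg\L(\G,\Sigma,m)$ through increasing sets $E,F$ (together with its converse), the $\Sigma$-continuity fact $m(\vartheta E)=0$ is used to replace $e,f$ by functions in $C_c(\G)$ without leaving the algebra, and Theorem 22.5 is invoked to conclude that the resulting kernels are continuous and supported on the graph $(\G,\leq)$. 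You instead argue by hand: you derive the support condition $F_\xi\times F_\eta\subseteq(\G,\leq)$ directly from invariance under the projections $P_{V\Sigma}$, truncate to compact essential supports, and then translate $\xi$ by $\sigma\in\mathring{\Sigma}$ near the identity so that the compact product of supports lands in the open set $\{(s,t)\mid t^{-1}s\in\mathring{\Sigma}\}$, after which the tube lemma and the $L^2$-density of $C_c$ of an open set yield a kernel in $C_c(\G\times\G)$ supported on the graph; this translation is exactly what handles the generic situation where the supports touch the diagonal. Your route avoids both the citations to the nest-algebra literature and the boundary-null property of increasing sets (you use only that $\Sigma$ is closed, that $\Sigma\cdot\mathring{\Sigma}\subseteq\mathring{\Sigma}$, and that $1\in\overline{\mathring{\Sigma}}$, i.e. condition (iii)), at the cost of the translation and tube-lemma bookkeeping; the paper's proof is shorter but leans on the quoted results and on the $\Sigma$-continuity of $m$. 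Both arguments rest on the same external input in the end, namely Laurie's complete distributivity giving density of the span of rank-one operators in the compacts, which the paper records just before the lemma.
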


\begin{proof}
It is enough to show that the span of the rank-one operators  in $\Alg  \L(\G, \Sigma, m)$ satisfying the requirements of the lemma forms a dense set. Then by unitary equivalence and the special form of $h$, where $m = h d \mu$, we also have the result for $\K(\G, \Sigma, \mu)$. 

Let $e\otimes f$ denote the rank-one operator on $L^2(\G)$ defined as $(e\otimes f)(h)= \langle h\mid e\rangle f $, where $e,f, h \in L^2(\G)$. By \cite[Lemma 23.3]{DavNest} if $e \otimes f \in \Alg  \L(\G, \Sigma, m)$, then there exist increasing Borel sets $E, F$ so that $e =e \chi_{E^c} $ and $f = f \chi_{F}$; furthermore if $e',f' \in L^2(\G, m)$ satisfy  $e' = e' \chi_{E^c}$ and $f' = f' \chi_{F}$, then $e' \otimes f' \in \Alg  \L(\G, \Sigma, m)$ as well. Since $m(\vartheta E^c)= m(\vartheta F) =0$, we conclude that the rank-one operators of the form $e\otimes f \in \Alg  \L(\G, \Sigma, m)$, $e,f \in C_c(\G)$, are dense in the set of all rank-one operators. By \cite[Theorem 22.5]{DavNest} such rank-one operators have continuous kernel supported on $(\G, \leq)$ and so the closure of all integral operators described in the statement of the Lemma contains all rank-one operators in $\Alg  \L(\G, \Sigma, m)$. By \cite[Theorem 23.7 (ii)]{DavNest}, the rank-one operators in $\Alg  \L(\G, \Sigma, m)$ are dense in $\K(\G, \Sigma, \mu)$ and the conclusion follows.
\end{proof}

Let $(\A , \G,\Sigma, \alpha)$ be an \textit{ordered dynamical system}, i.e., a dynamical system $(\A, \G, \alpha)$ with $(\G , \Sigma )$ an ordered abelian group. In addition to the crossed products $\A\cpf\subseteq \cenv(\A) \cpf$, the presence of the ordered structure allows us to introduce the \textit{semicrossed product} $\A \cps$ as the norm closed subalgebra of $\cenv(\A) \cpf$ generated by $ C_c(\Sigma, \A)$, i.e., all continuous $\A$-valued functions defined on $\G$ with compact support contained in $\Sigma$

\begin{remark} \label{alternative} 
(i) We make the following convention throughout this section. If $X \subseteq \G$ (resp. $X \subseteq \G \times \G)$, then $C_c(X)$ will denote the continuous functions on $G$ (resp. $\G \times \G$) with compact support contained in $X$. A similar convention applies to the $\A$-valued functions comprising $C_c(X, \A)$.

Note that in the case where $X$ is a open subset, $C_c(X)$ can be identified naturally with the continuous functions with compact support which are defined only on $X$.

(ii) Alternatively one can define $\A \cps$ as the norm closed subalgebra of $\cenv(\A) \cpf$ generated by $C_c(\mathring{\Sigma}, \A)$. 

Indeed, if $f \in C_c(\Sigma)$, then its restriction on $\mathring{\Sigma}$ can be approximated in the $L^1$-norm by elements of $C_c(\mathring{\Sigma})$. Since $\mu(\vartheta\Sigma)=0$, the same $L^1$-approximation applies to $f $ itself. From this we conclude that elementary tensors in $C_c(\Sigma, \A)$ can be approximated in the operator norm by elements of $C_c(\mathring{\Sigma}, \A)$ and the conclusion follows.
\end{remark}

In the case where $\G$ is discrete, our definition of $\A \cps$ coincides with the one commonly appearing in the literature \cite{DavKatAn, DavKatCr, Pet, Pet2}. The same is true in the non-discrete case as well but that requires some explanation. McAsey and Muhly \cite[pg 129]{MM1} define the semicrossed product as the norm closed subalgebra of $\cenv(\A) \cpf$ generated by the set of functions in $L^1(\G, \A)$ which are supported on $\Sigma$. Note however that $\mu(\vartheta \Sigma)= 0$ and so we need only to consider functions in $L^1(\mathring{\Sigma}, \A)$. Such functions can be approximated in the operator norm by $C_c(\mathring{\Sigma}, \A)$ and so by the remark above, both definitions describe the same object.

In the sequel we utilize various actions of $\G$ (and $\Sigma$) on certain $\ca$-algebras. For instance $\G$ acts on $C_0(\G)$ be left and right translation, denoted as $\lt$ and $\rt$ respectively, i.e., $\lt_s(f)(t) = f(s^{-1}t)$ and $\rt_s(f)(t)=f(ts)$, $s, t \in G$. The trivial action of $\G$ on any $\ca$-algebra is denoted as $\id$.

\begin{proposition}Let $(\G, \Sigma)$ be an ordered group. Then there exists an equivariant isomorphism $\psi$ from
\[
\big(C_0(\G)\rtimes_{\lt}\Sigma, \G, \rt\otimes \id\big) \mbox{ onto } \big(\K(\G , \Sigma, \mu), \G, \Ad\rho\big)
\]
defined on $f \in C_c(\Sigma \times \G)\subseteq C_0(\G)\rtimes_{\lt}\Sigma$ and $ g \in C_c(\G)$by 
\[
\psi(f)(g)(s) = \int f(r, s) g(r^{-1}s)d\mu(r).
\]
\end{proposition}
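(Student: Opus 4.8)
The plan is to recognize $\psi$ as the restriction to $C_0(\G)\rtimes_{\lt}\Sigma$ of the integrated form of an explicit covariant representation of $(C_0(\G),\G,\lt)$; the algebra and isometry properties then come for free, and it remains only to match ranges and intertwine the actions by direct computation. Let $\pi\colon C_0(\G)\to B(L^2(\G))$ be the multiplication representation and $\lambda$ the left regular representation, $(\lambda_s g)(t)=g(s^{-1}t)$. One checks directly that $\lambda_s\pi(a)\lambda_s^{*}=\pi(\lt_s(a))$, so $(\pi,\lambda)$ is a covariant pair for $(C_0(\G),\G,\lt)$, and its integrated form satisfies $\big((\pi\rtimes\lambda)(f)g\big)(s)=\int f(r,s)g(r^{-1}s)\,d\mu(r)$ for $f\in C_c(\G\times\G)$, $g\in C_c(\G)$; thus $\pi\rtimes\lambda$ restricts to $\psi$ on $C_c(\Sigma\times\G)$. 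It is well known that $\pi\rtimes\lambda$ is a faithful representation of $C_0(\G)\rtimes_{\lt}\G$ with range $\K(L^2(\G))$ — this is the identification of the $\ca$-algebra of the translation action of $\G$ on itself, a form of the Stone--von Neumann theorem. Since $C_0(\G)$ is already a $\ca$-algebra, $\cenv(C_0(\G))\rtimes_{\lt}\G=C_0(\G)\rtimes_{\lt}\G$, and by definition $C_0(\G)\rtimes_{\lt}\Sigma$ is the closed subalgebra of this $\ca$-algebra generated by $C_c(\Sigma\times\G)$. Restricting the completely isometric representation $\pi\rtimes\lambda$ to this subalgebra exhibits $\psi$ as a completely isometric (hence injective, multiplicative) homomorphism on $C_0(\G)\rtimes_{\lt}\Sigma$, whose range is the closed algebra generated by $\psi\big(C_c(\Sigma\times\G)\big)$.

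Next I would identify this range with $\K(\G,\Sigma,\mu)$. The substitution $t=r^{-1}s$ rewrites $\psi(f)$ as the Hilbert--Schmidt integral operator with kernel $K_f(s,t)=f(st^{-1},s)$, i.e.\ $K_f=f\circ\Phi$ where $\Phi(s,t)=(st^{-1},s)$ is a homeomorphism of $\G\times\G$ carrying $(\G,\leq)$ onto $\Sigma\times\G$. Hence $f\mapsto K_f$ is a bijection of $C_c(\Sigma\times\G)$ onto the family of functions in $C_c(\G\times\G)$ supported on $(\G,\leq)$. By Lemma~\ref{integral op} the associated integral operators are dense in $\K(\G,\Sigma,\mu)$; since they also lie in the closed algebra $\psi\big(C_0(\G)\rtimes_{\lt}\Sigma\big)$ and, conversely, $\psi\big(C_c(\Sigma\times\G)\big)\subseteq\K(\G,\Sigma,\mu)$, we conclude that $\psi$ is an isometric isomorphism of $C_0(\G)\rtimes_{\lt}\Sigma$ onto $\K(\G,\Sigma,\mu)$.

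It remains to check equivariance. Both actions are genuine: $\rt_s$ commutes with every $\lt_r$, so $\rt\otimes\id$ is an automorphic action of $\G$ on $C_0(\G)\rtimes_{\lt}\G$ preserving $C_c(\Sigma\times\G)$, hence restricting to $C_0(\G)\rtimes_{\lt}\Sigma$; and a short computation gives $\rho_s P_E\rho_s^{*}=P_{Es^{-1}}$, with $Es^{-1}$ increasing whenever $E$ is, so $\Ad\rho$ preserves $\L(\G,\Sigma,\mu)$ and therefore $\Alg\L(\G,\Sigma,\mu)$ and $\K(\G,\Sigma,\mu)$. Finally, $\psi\circ(\rt_s\otimes\id)=\Ad\rho_s\circ\psi$ is verified on $C_c(\Sigma\times\G)$ by unwinding definitions: for $f\in C_c(\Sigma\times\G)$, $g\in C_c(\G)$, $u\in\G$, using $\rt_s(a)(t)=a(ts)$ and $(\rho_s g)(t)=g(ts)$ (the modular factor being trivial), both $\psi\big((\rt_s\otimes\id)f\big)g$ and $\rho_s\psi(f)\rho_s^{*}g$ equal the function $u\mapsto\int f(r,us)\,g(r^{-1}u)\,d\mu(r)$.

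The main obstacle is conceptual rather than computational: the key move is to see $\psi$ as the integrated form of $(\pi,\lambda)$, which is exactly what makes it automatic that $\psi$ is an injective homomorphism and an isometry. Once that is in place, the range is pinned down by the change of variables feeding into Lemma~\ref{integral op}, and equivariance is a one-line substitution. One should only be careful that $C_0(\G)\rtimes_{\lt}\Sigma$ is formed inside $\cenv(C_0(\G))\rtimes_{\lt}\G=C_0(\G)\rtimes_{\lt}\G$ and is generated by $C_c(\Sigma\times\G)$, so that it indeed suffices to verify the homomorphism property, the range, and the intertwining relation on that dense set.
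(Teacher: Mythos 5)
Your proposal is correct and follows essentially the same route as the paper: both invoke the Stone--von Neumann identification of $C_0(\G)\rtimes_{\lt}\G$ with $\K\big(L^2(\G)\big)$ via the same map $\psi$ (you merely make the covariant pair $(\pi,\lambda)$ and its integrated form explicit), both pin down the image of $C_0(\G)\rtimes_{\lt}\Sigma$ by the change of variables $K(s,t)=f(st^{-1},s)$ together with Lemma~\ref{integral op}, and both verify equivariance by the same direct computation. Your extra check that $\Ad\rho$ preserves $\L(\G,\Sigma,\mu)$, hence $\K(\G,\Sigma,\mu)$, is a harmless addition not spelled out in the paper.
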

\begin{proof}
In the case $\Sigma = \G$ the statement of the Proposition is the content of the Stone-Von Neumann Theorem, which is actually verified using the same equivariant map $\psi$. What we need to additionally verify here is that $$\psi\big( C_c(\Sigma \times \G) \big) \subseteq \K(\G , \Sigma, \mu)$$ as a dense subset.

First notice that if $f \in C_c(\Sigma \times \G)$, then 
\[
\psi(f)(g)(s) =   \int f(r, s) g(r^{-1}s)d\mu(r) = \int K(s, r)g(r ) d \mu(r),
\]
with $K(s, r )\equiv f(r^{-1}s,s)$. If $K(s, r)\neq 0$, then clearly $r^{-1}s \in \Sigma$ and so $s \geq r$. Hence the support of $K$ is contained in the graph of $(\G , \leq)$ and so  $\psi\big( C_c(\Sigma \times \G) \big) \subseteq \K(\G , \Sigma, \mu)$.

To prove density, let $K \in C_c(\G \times \G)$ supported on $(\G, \leq)$. Then 
\[
f_K(r, s) = K(s, r^{-1} s ), \, r, s \in \G
\]
defines an element of  $C_c(\G \times \G)$ that satisfies 
\[
\psi(f_K)(g)(s) = \int K(s, r) g(r)d\mu(r).
\]
Note that if $r \notin \Sigma$, then $s \ngeq r^{-1}s$ and so since $K$ supported on $(\G, \leq)$, we have
\[
f_K(r, s) = K(s, r^{-1} s )=0,
\]
i.e., $f_K \in C_c(\Sigma \times \G)$. Hence the range $\psi$ contains all integral operators appearing in the statement of Lemma~\ref{integral op} and density follows.

For the equivariance note that 
\begin{align*}
\rho(v)\psi(f)(g)(s) &= \psi(f)(g)(sv) \\
			    &= \int f(r, sv) g(r^{-1}sv)d\mu(r) \\
			    &=\int (\rt\otimes\id)_v (f)(r, s) \rho(v)(g)g(r^{-1}s)d\mu(r)\\
			    &=\psi\big((\rt\otimes\id)_v (f)\big)\rho(v)(g)(s)
\end{align*}
and so $\rho(v)\psi(f)\rho(v^{-1})=\psi\big((\rt\otimes\id)_v (f)\big)$, as desired.
 \end{proof}
 
Under the additional assumption that $\Sigma$ generates $\G$ as a group, one can deduce the surjectivity of $\psi$ from \cite[Theorem 5.1]{MM2}. Note however that the proof of \cite[Theorem 5.1]{MM2} relies heavily on the theory of $\ca$-dynamical systems and makes good use of the Arveson spectrum. Our proof of course is of a very different flavor as it avoids $\ca$-dynamics and instead uses the theory of CSL algebras.
 
 \begin{corollary} \label{newStonecor}
 Let $(\A , \G,\Sigma, \alpha)$ be an ordered dynamical system. Then there exists a completely isometric equivariant isomorphism $\Psi \equiv \psi\otimes \id$ from 
 \[\big( C_0(\G, \A)\rtimes_{\lt\otimes \id}\Sigma, \G, (\rt\otimes \alpha) \otimes \id\big) \mbox{ onto }
\big(  \A \otimes \K ( \G , \Sigma, \mu ), \alpha \otimes \Ad \rho\big).\]
 \end{corollary}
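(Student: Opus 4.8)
The plan is to build $\Psi$ simply by tensoring the map $\psi$ of the preceding Proposition with the identity on $\A$, and then to rewrite its domain and codomain in the form appearing in the statement. First I would record that $\psi$ is not merely an isomorphism but a \emph{completely isometric} one: by the Stone--von Neumann theorem it is the restriction to $C_0(\G)\rtimes_{\lt}\Sigma$ of a $\ast$-isomorphism of $C_0(\G)\rtimes_{\lt}\G$ onto $\K(L^2(\G))$, and $C_0(\G)\rtimes_{\lt}\Sigma$ is by definition a closed subalgebra of $C_0(\G)\rtimes_{\lt}\G$. Since the minimal (spatial) tensor product of operator algebras is functorial for complete isometries, it follows at once that $\psi\otimes\id_{\A}$ is a completely isometric isomorphism of $\big(C_0(\G)\rtimes_{\lt}\Sigma\big)\otimes\A$ onto $\K(\G,\Sigma,\mu)\otimes\A$, which we then identify with $\A\otimes\K(\G,\Sigma,\mu)$ via the flip.

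The next step is to identify the domain. Since the action $\lt\otimes\id$ on $C_0(\G,\A)=C_0(\G)\otimes\A$ is trivial on the second leg, the twisted convolution on $C_c(\Sigma, C_0(\G,\A))$ decouples on elementary functions as $(f_1\otimes a_1)\ast(f_2\otimes a_2)=(f_1\ast f_2)\otimes a_1 a_2$, where $\ast$ on the right is the twisted convolution of $C_0(\G)\rtimes_{\lt}\Sigma$; this already places a dense copy of the algebraic tensor product of $C_0(\G)\rtimes_{\lt}\Sigma$ with $\A$ inside $C_0(\G,\A)\rtimes_{\lt\otimes\id}\Sigma$ carrying the correct algebra structure. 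To see that the ambient operator--algebra norm is the minimal tensor norm, I would pass to the $\ca$-envelope: $\cenv\big(C_0(\G,\A)\big)=C_0(\G,\cenv(\A))$, and the classical fact that a full crossed product commutes with tensoring by a $\ca$-algebra carrying the trivial action gives $C_0(\G,\cenv(\A))\rtimes_{\lt\otimes\id}\G\cong\big(C_0(\G)\rtimes_{\lt}\G\big)\otimes\cenv(\A)$ (all tensor norms here being unambiguous as $C_0(\G)$ is nuclear). Restricting this isomorphism to the semicrossed product then yields a completely isometric identification
\[
C_0(\G,\A)\rtimes_{\lt\otimes\id}\Sigma\;\cong\;\big(C_0(\G)\rtimes_{\lt}\Sigma\big)\otimes\A .
\]
The hard part will be exactly this displayed identification of the domain; everything else is bookkeeping.

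Finally I would verify equivariance, and it suffices to do so on the dense set of elementary tensors $f\otimes a$ with $f\in C_c(\Sigma\times\G)\subseteq C_0(\G)\rtimes_{\lt}\Sigma$ and $a\in\A$. Unwinding the identification of the previous paragraph, the action $(\rt\otimes\alpha)\otimes\id$ carries $f\otimes a$ to $(\rt\otimes\id)_v(f)\otimes\alpha_v(a)$; applying $\Psi=\psi\otimes\id_{\A}$ and invoking the equivariance $\Ad\rho(v)\circ\psi=\psi\circ(\rt\otimes\id)_v$ established in the Proposition gives $\Psi\big((\rt\otimes\id)_v(f)\otimes\alpha_v(a)\big)=\Ad\rho(v)(\psi(f))\otimes\alpha_v(a)$, which after the flip is precisely $(\alpha\otimes\Ad\rho)_v\big(\Psi(f\otimes a)\big)$. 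Hence $\Psi$ intertwines the two actions, and combining this with the first two paragraphs completes the proof.
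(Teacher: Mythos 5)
Your proposal is correct and is essentially the argument the paper has in mind: the paper states the corollary without proof, defining $\Psi\equiv\psi\otimes\id$, which is exactly the tensoring of the Proposition's equivariant isomorphism with $\id_{\A}$ that you carry out. Your added details — complete isometry of $\psi$ via Stone--von Neumann, the identification $C_0(\G,\A)\rtimes_{\lt\otimes\id}\Sigma\cong\big(C_0(\G)\rtimes_{\lt}\Sigma\big)\otimes\A$ through the $\ca$-envelope and nuclearity of $C_0(\G)$, and the equivariance check on elementary tensors — simply supply the bookkeeping the paper leaves implicit.
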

 
 Given a dynamical system $(\A, \G, \alpha)$, we have a homomorphism
 \[
 \hat{\alpha}\colon \hat{\G} \longrightarrow \Aut (\A \cpf)
 \]
 which is given by $\hat{\alpha}(f)(s) = \overline{\gamma(s)}f(s)$, for $\gamma \in \hat{\G}$ and $f \in C_c(\G, \A)$. The dynamical system $(\A \cpf, \hat{\G}, \hat{\alpha})$ is called the \textit{dual system} and $\hat{\alpha}$ the \textit{dual action}. The well-known Takai duality~\cite{KR, Takai} explains what happen when we take the crossed product of the dual system. Proposition\ref{Takai duality} below explains what happens when we take the crossed product of a \textit{semicrossed product} by the dual action. But first we need the following
 
 \begin{lemma} \label{phi id}
 Let $(\G,\Sigma)$ be an ordered abelian group. Let $(\A , \G,\alpha)$ and $(\B , \G,\beta)$ be dynamical systems and let $\phi : \A \rightarrow \B$ be an equivariant completely isometric isomorphism. Then the integrated form of $\phi$,
 \[
 \phi\rtimes \id: \A\cpf \longrightarrow \B\rtimes_{\beta} \G,
 \]
 where $\phi\rtimes\id(f)(s)\equiv \phi(f(s))$, $f \in C_c(\G, \A)$, $s \in \G$, establishes a completely isometric isomorphism whose restriction on $\A \rtimes_{\alpha}\Sigma$ maps onto $\B\rtimes_{\beta} \Sigma$.
 \end{lemma}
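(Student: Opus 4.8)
The plan is to verify that $\phi \rtimes \id$ is a well-defined, completely isometric algebra isomorphism and then track how it interacts with the semicrossed product structure. First I would check that the formula $(\phi\rtimes\id)(f)(s) = \phi(f(s))$ gives a well-defined $*$-homomorphism at the level of $C_c(\G,\A)$: since $\phi$ is equivariant, $\phi(\alpha_s(a)) = \beta_s(\phi(a))$, and a direct computation shows that convolution in $C_c(\G,\A)$ (twisted by $\alpha$) is carried to convolution in $C_c(\G,\B)$ (twisted by $\beta$). Because $\phi$ extends to a completely isometric isomorphism $\cenv(\A) \to \cenv(\B)$ which is still equivariant (uniqueness of the $\ca$-envelope, together with the fact that an equivariant completely isometric map of operator algebras extends equivariantly to the envelopes — this is standard and appears in \cite{KR}), the integrated form extends to a $*$-isomorphism $\cenv(\A)\rtimes_\alpha\G \to \cenv(\B)\rtimes_\beta\G$. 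Restricting this $*$-isomorphism to the subalgebra $\A\rtimes_\alpha\G$ of $\cenv(\A)\rtimes_\alpha\G$ gives the desired completely isometric isomorphism onto $\B\rtimes_\beta\G$, since the restriction of a completely isometric map is completely isometric and the restricted map is clearly onto $\B\rtimes_\beta\G$ (it maps a generating set of $\A\rtimes_\alpha\G$, namely $C_c(\G,\A)$, onto a generating set of $\B\rtimes_\beta\G$).

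Next I would address the claim about the semicrossed product. By definition, $\A\rtimes_\alpha\Sigma$ is the norm-closed subalgebra of $\cenv(\A)\rtimes_\alpha\G$ generated by $C_c(\Sigma,\A)$. Since $(\phi\rtimes\id)(C_c(\Sigma,\A)) = C_c(\Sigma,\B)$ — the support condition is preserved because $\phi$ acts pointwise and does not move the variable $s\in\G$ — and since $\phi\rtimes\id$ (extended to the envelopes) is an isometric homomorphism, it maps the closed algebra generated by $C_c(\Sigma,\A)$ isometrically onto the closed algebra generated by $C_c(\Sigma,\B)$, which is precisely $\B\rtimes_\beta\Sigma$. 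The same argument applied to matrix amplifications $\phi^{(n)}\rtimes\id$ on $M_n(\A)$-valued functions shows the map is completely isometric on $\A\rtimes_\alpha\Sigma$ as well.

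The main obstacle, and the only genuinely non-routine point, is justifying that an equivariant completely isometric isomorphism $\phi\colon\A\to\B$ extends to an equivariant $*$-isomorphism of the $\ca$-envelopes, so that the integrated form makes sense on $\cenv(\A)\rtimes_\alpha\G$. This is exactly the mechanism used throughout \cite{KR} to define crossed products of non-selfadjoint algebras and to establish functoriality, so I would simply invoke it: the action $\alpha$ on $\A$ extends canonically to $\cenv(\A)$, likewise $\beta$ on $\cenv(\B)$, and the universal property of the $\ca$-envelope forces the unique extension $\tilde\phi\colon\cenv(\A)\to\cenv(\B)$ of $\phi$ to satisfy $\tilde\phi\circ\alpha_s = \beta_s\circ\tilde\phi$ for all $s\in\G$. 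Everything else is a bookkeeping exercise with convolution formulas, compact supports, and the fact that restrictions of completely isometric maps remain completely isometric.
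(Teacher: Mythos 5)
Your proof is correct and follows essentially the same route as the paper: extend $\phi$ equivariantly to a $*$-isomorphism $\tilde{\phi}$ of the $\ca$-envelopes, integrate (the paper cites \cite[Corollary 2.48]{Will} for this) to obtain a $*$-isomorphism $\cenv(\A)\cpf \to \cenv(\B)\rtimes_{\beta}\G$, and restrict to the non-selfadjoint crossed and semicrossed products. The only cosmetic difference is in finishing the surjectivity onto $\B\rtimes_{\beta}\Sigma$: you note directly that the pointwise formula carries $C_c(\Sigma,\A)$ exactly onto $C_c(\Sigma,\B)$, whereas the paper checks elementary tensors supported in $\mathring{\Sigma}$ and then invokes Remark~\ref{alternative}(ii).
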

 
 \begin{proof}
 Consider the completely isometric extension $\tilde{\phi}: \cenv(\A) \rightarrow \cenv (\\B)$ of $\phi$ and apply \cite[Corollary 2.48]{Will} to obtain a $*$-isomorphism 
 \[
 \tilde{\phi}\rtimes \id: \cenv(\A) \cpf \longrightarrow  \cenv(\B) \rtimes_{\beta} \G,
 \]
 where $\tilde{\phi}\rtimes\id(f)(s)\equiv \tilde{\phi}(f(s))$, $f \in C_c(\G, \cenv(\A))$, $s \in \G$. The fact that the restriction of $\tilde{\phi}\rtimes\id$ on $\A \rtimes \G$ produces the desired map $\phi \rtimes \id$ with the desired surjectivity follows by examining elementary tensors.
 
 Indeed, if $a \in \A$ and $f \in C_c(\mathring{\Sigma})$ then $\phi \rtimes \id (af) =\phi(a)f$. Since elementary tensors of the form $af$, $a \in \A$, $f \in C_c(\mathring{\Sigma})$, span a dense subset of $C_c(\mathring{\Sigma}, \A)$, we obtain that
 \[
 \phi \rtimes \id \big(C_c(\mathring{\Sigma}, \A)\big) = C_c(\mathring{\Sigma}, \B).
 \]
 Therefore Remark~\ref{alternative}(ii) implies $\phi \rtimes \id (\A \rtimes_{\alpha}\Sigma)= B\rtimes_{\beta} \Sigma$, as desired
 \end{proof}

\begin{proposition}[Takai duality for semicrossed products] \label{Takai duality}
Let $(\A , \G,\Sigma, \alpha)$ be an ordered dynamical system. Then there exists complete isomorphism 
\[
\Phi \colon \
\big( \A \rtimes_{\alpha} \Sigma\big)\rtimes_{\hat{\alpha}}\hat{\G} \longrightarrow \A \otimes \K ( \G , \Sigma, \mu ),
\]
which is equivariant for the double dual action $\hat{\hat{\alpha}} \colon \G \rightarrow \Aut \big((\A \rtimes_{\alpha} \Sigma \big)\rtimes_{\hat{\alpha}}\hat{\G}\big) $ and the action $\alpha \otimes \Ad \rho \colon\G \rightarrow \Aut \big( \A \otimes  \K ( \G , \Sigma, \mu )\big)$.
\end{proposition}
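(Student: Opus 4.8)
The plan is to deduce the result from the classical ($C^*$-algebraic) Takai duality for $\cenv(\A)$ together with the twisted Stone--von Neumann identification of Corollary~\ref{newStonecor}, the only real work being to track the (non-selfadjoint) semicrossed-product subalgebra through these identifications.

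First I set up the ambient picture. Since $\A\rtimes_\alpha\Sigma$ is a norm-closed subalgebra of $\cenv(\A)\rtimes_\alpha\G$ on which the dual action $\hat\alpha$ restricts (each $\hat\alpha(\gamma)$ preserves supports in $\Sigma$), an argument with $\cenv$ of this equivariant completely isometric inclusion, exactly along the lines of the proof of Lemma~\ref{phi id}, shows that $\big(\A\rtimes_\alpha\Sigma\big)\rtimes_{\hat\alpha}\hat\G$ embeds in $\big(\cenv(\A)\rtimes_\alpha\G\big)\rtimes_{\hat\alpha}\hat\G$ as the closed subalgebra generated by $C_c(\hat\G\times\Sigma,\A)$, the double dual action $\hat{\hat{\alpha}}$ being the restriction of the one on this $C^*$-algebra. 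Classical Takai duality~\cite[Theorem 4.4]{KR} (see also \cite{Takai}) applied to $(\cenv(\A),\G,\alpha)$ gives an equivariant $*$-isomorphism $\big(\cenv(\A)\rtimes_\alpha\G\big)\rtimes_{\hat\alpha}\hat\G\to\cenv(\A)\otimes\K(L^2\G)$ carrying $\hat{\hat{\alpha}}$ to $\alpha\otimes\Ad\rho$, and Corollary~\ref{newStonecor}, applied to the (trivially ordered) system $(\cenv(\A),\G,\G,\alpha)$ in which $\K(\G,\G,\mu)=\K(L^2\G)$, identifies $\cenv(\A)\otimes\K(L^2\G)$ equivariantly with $C_0(\G,\cenv(\A))\rtimes_{\lt\otimes\id}\G$, sending $\alpha\otimes\Ad\rho$ to $(\rt\otimes\alpha)\otimes\id$. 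Denote by $\Lambda$ the resulting equivariant $*$-isomorphism $\big(\cenv(\A)\rtimes_\alpha\G\big)\rtimes_{\hat\alpha}\hat\G\to C_0(\G,\cenv(\A))\rtimes_{\lt\otimes\id}\G$.

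The heart of the matter is to prove that $\Lambda$ restricts to an isomorphism of $\big(\A\rtimes_\alpha\Sigma\big)\rtimes_{\hat\alpha}\hat\G$ onto $C_0(\G,\A)\rtimes_{\lt\otimes\id}\Sigma$. For this I would compute $\Lambda$ on compactly supported functions, following its two halves: the Takai map is implemented, at that level, by a Fourier transform in the $\hat\G$-variable together with an $\alpha$-twist, and the inverse of the Stone--von Neumann isomorphism re-expresses the resulting operators on $L^2(\G,\cenv(\A))$ as functions of two variables in $\G$, as in the Proposition preceding Corollary~\ref{newStonecor}. One then checks that ``supported in $\Sigma$ in the $\G$-coordinate of $\cenv(\A)\rtimes_\alpha\G$'' is transported precisely to ``the associated kernel is supported on $(\G,\leq)=\{(s,t)\mid s\geq t\}$'', i.e.\ to membership in $\K(\G,\Sigma,\mu)$ by Lemma~\ref{integral op}; consequently $\Lambda$ maps the dense subalgebra $C_c(\hat\G\times\Sigma,\A)$ into $C_0(\G,\A)\rtimes_{\lt\otimes\id}\Sigma$ with dense image, and the same computation run backwards does the reverse, so (as $\Lambda$ is an isometric bijection) the two closed subalgebras correspond. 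One may equivalently argue by matching the generating pieces $\A$, $C_c(\Sigma)\subseteq\cenv(\A)\rtimes_\alpha\G$ and $C_c(\hat\G)$ of $\big(\A\rtimes_\alpha\Sigma\big)\rtimes_{\hat\alpha}\hat\G$ with generators of $C_0(\G,\A)\rtimes_{\lt\otimes\id}\Sigma$. \emph{This bookkeeping of the order data through the Fourier transform and the twist --- in particular getting the orientation of the order right, which is exactly why the right regular representation $\rho$, rather than $\lambda$, appears --- is the one step that genuinely requires care}; everything else is formal.

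It remains only to run the same Stone--von Neumann identification in the other direction at the bottom: Corollary~\ref{newStonecor}, now applied to the given ordered dynamical system $(\A,\G,\Sigma,\alpha)$ itself, provides a completely isometric equivariant isomorphism $\Psi\colon C_0(\G,\A)\rtimes_{\lt\otimes\id}\Sigma\to\A\otimes\K(\G,\Sigma,\mu)$ taking $(\rt\otimes\alpha)\otimes\id$ to $\alpha\otimes\Ad\rho$. Then $\Phi\equiv\Psi\circ\big(\Lambda\big|_{(\A\rtimes_\alpha\Sigma)\rtimes_{\hat\alpha}\hat\G}\big)$ is a complete isomorphism onto $\A\otimes\K(\G,\Sigma,\mu)$, and chaining the two intertwining relations ($\hat{\hat{\alpha}}\leftrightarrow(\rt\otimes\alpha)\otimes\id\leftrightarrow\alpha\otimes\Ad\rho$) shows that $\Phi$ is equivariant for $\hat{\hat{\alpha}}$ and $\alpha\otimes\Ad\rho$, as claimed; it is moreover completely isometric whenever $\cenv\big(\A\rtimes_\alpha\Sigma\big)=\cenv(\A)\rtimes_\alpha\G$.
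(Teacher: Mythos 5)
Your route is, in substance, the paper's own: restrict the Takai duality isomorphism (your $\Lambda$ is exactly the map $\widetilde{\Phi}$ from the proof of \cite[Theorem 4.4]{KR}) to the subalgebra generated by $C_c(\hat{\G}\times\Sigma,\A)$, identify the image with $C_0(\G,\A)\rtimes_{\lt\otimes\id}\Sigma$, and finish with Corollary~\ref{newStonecor}; chaining intertwiners for the equivariance is fine. The gap is precisely at the step you yourself call the heart of the matter, which you assert ("one then checks\dots", "the same computation run backwards does the reverse") but do not carry out, and which is not mere kernel bookkeeping. From the closed formula $\widetilde{\Phi}(F)(s,r)=\int_{\hat{\G}}\alpha_r^{-1}\big(F(\gamma,s)\big)\overline{\gamma(s^{-1}r)}\,d\gamma$ the inclusion $\Lambda\big(C_c(\hat{\G}\times\Sigma,\A)\big)\subseteq C_c(\Sigma,C_0(\G,\A))$ is immediate, since the $\G$-variable $s$ is untouched (so the "orientation of the order" is not actually an issue here; the graph/kernel picture and the choice of $\rho$ belong to the Stone--von Neumann leg, i.e.\ to Corollary~\ref{newStonecor}, not to the Takai leg). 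What is \emph{not} immediate is surjectivity: $\widetilde{\Phi}^{-1}$ involves an inverse Fourier transform in the $\hat{\G}$-variable, which destroys compact support, so showing that the image of $C_c(\hat{\G}\times\Sigma,\A)$ is dense in $C_0(\G,\A)\rtimes_{\lt\otimes\id}\Sigma$ in the operator norm requires a genuine approximation argument. The paper supplies it by factoring $\widetilde{\Phi}=\widetilde{\Phi}_3\circ\widetilde{\Phi}_2\circ\widetilde{\Phi}_1$: the first leg is multiplication by $\gamma(s)$ plus a variable swap and visibly preserves $\Sigma$-support in both directions, while $\widetilde{\Phi}_2,\widetilde{\Phi}_3$ are integrated forms of $\G$-equivariant isomorphisms of the coefficient algebras (\cite[Lemmas 7.3 and 7.4]{Will}), so Lemma~\ref{phi id} --- whose proof, via elementary tensors and Remark~\ref{alternative}(ii), is exactly the needed density argument --- shows each carries the $\Sigma$-semicrossed product onto the $\Sigma$-semicrossed product. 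Your proof needs either this factorization or an explicit density argument in its place.

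Two smaller points. The embedding of $\big(\A\rtimes_{\alpha}\Sigma\big)\rtimes_{\hat{\alpha}}\hat{\G}$ into $\big(\cenv(\A)\rtimes_{\alpha}\G\big)\rtimes_{\hat{\alpha}}\hat{\G}$ is not obtained "along the lines of Lemma~\ref{phi id}" (that lemma concerns equivariant isomorphisms of coefficient algebras); what it rests on is the independence of the crossed product from the choice of $\ca$-cover, available because $\hat{\G}$ is abelian, hence amenable \cite{KR} --- the same fact the paper invokes before Theorem~\ref{newGL} --- or one can sidestep it entirely, as the paper does, by applying the non-selfadjoint Takai duality of \cite[Theorem 4.4]{KR} directly to the operator algebra $\A$. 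Relatedly, your closing hedge that $\Phi$ is completely isometric "whenever $\cenv(\A\rtimes_{\alpha}\Sigma)=\cenv(\A)\rtimes_{\alpha}\G$" is unnecessary: once the embedding is justified, $\Phi$ is the restriction of a $*$-isomorphism composed with the completely isometric map of Corollary~\ref{newStonecor}, hence automatically completely isometric.
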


\begin{proof} We appeal to the proof of the Takai Duality Theorem as it appears in \cite{KR} (and also in \cite{Will}). There we construct an equivariant completely isometric isomorphism
$\widetilde{\Phi}$ from
\[
\big( ( \A \cpf)\rtimes_{\hat{\alpha}}\hat{\G}, \G, \hat{\hat{\alpha}}\big) \mbox{ onto } \big( C_0(\G, \A)\rtimes_{\lt \otimes \id}\G , \G , (\rt\otimes \alpha) \otimes \id \big)
\]
which maps an $F \in C_c(\hat{\G}\times \G, \A)$ to the element in $C_c\big(\G, C_0(\G, \A)\big)$ given by
\[
\widetilde{\Phi}(F)(s, r)= \int_{\hat{\G}}\alpha_r^{-1}\big(F(\gamma, s)\big)\overline{\gamma(s^{-1}r)}d\gamma.
\]
Actually  $\widetilde{\Phi} =  \widetilde{\Phi}_3\circ  \widetilde{\Phi}_2 \circ  \widetilde{\Phi}_1$, where 
\begin{align*} 
\widetilde{\Phi}_1 &: \big( \A \cpf\big)\rtimes_{\hat{\alpha}}\hat{\G} \longrightarrow \big( \A\rtimes_{\id} \hat{\G}\big)\rtimes_{\hat{\id}^{-1}\otimes \alpha } \G \\
\widetilde{\Phi}_2 &: \big( \A\rtimes_{\id} \hat{\G}\big)\rtimes_{\hat{\id}^{-1}\otimes \alpha } \G \longrightarrow C_0(\G, \A) \rtimes_{\lt \otimes \alpha} \G \\
\widetilde{\Phi}_3 &: C_0(\G, \A) \rtimes_{\lt \otimes \alpha} \G
 \longrightarrow C_0(\G, \A) \rtimes_{\lt \otimes \id} \G, 
\end{align*}
are defined by
\begin{align*}
\widetilde{\Phi}_1(F)(s, \gamma) &= \gamma(s)F(\gamma, s) \\
\widetilde{\Phi}_2(F)(s, r)&= \int_{\hat{\G}} F(s, \gamma)\overline{\gamma(r)}d\gamma\\
\widetilde{\Phi}_3(F) (s,r)&= \alpha_{r^{-1}}\big(F(s, r)\big)
\end{align*} 
From the first of the above formulas one can easily verify that 
\[
\widetilde{\Phi}_1\big( (\A \rtimes_{\alpha}\Sigma)\rtimes_{\hat{\alpha}}\hat{\G} \big) =\big( \A\rtimes_{\id} \hat{\G}\big)\rtimes_{\hat{\id}^{-1}\otimes \alpha } \Sigma. \]
It is also true that 
\[\widetilde{\Phi}_2\big(( \A\rtimes_{\id} \hat{\G})\rtimes_{\hat{\id}^{-1}\otimes \alpha } \Sigma\big) = C_0(\G, \A) \rtimes_{\lt \otimes \alpha} \Sigma \]
and 
\[\widetilde{\Phi}_3\big(C_0(\G, \A) \rtimes_{\lt \otimes \alpha} \Sigma\big)
 = C_0(\G, \A) \rtimes_{\lt \otimes \id} \Sigma
\]
but this follows from a different reasoning. To verify the first of these formulas, note that $\widetilde{\Phi}_2$ is the integrated form of a $\G$-equivariant isomorphism between $\A \rtimes_{\id}\hat{\G}$ and $C_0(\G, \A)$ (see \cite[Lemma 7.3]{Will}) and so Lemma~\ref{phi id} applies to show surjectivity. The second formula follows similarly (see \cite[Lemma7.4]{Will}).

Therefore by restricting $\widetilde{\Phi}$ we obtain a completely isometric isomorphism
$\Phi$ from
\[
\big( ( \A \rtimes_{\alpha} \Sigma)\rtimes_{\hat{\alpha}}\hat{\G}, \G, \hat{\hat{\alpha}}\big)
\]
onto 
\[\big( C_0(\G, \A)\rtimes_{\lt \otimes \id}\Sigma , \G , (\rt\otimes \alpha) \otimes \id \big).
\]
Now $ \hat{\hat{\alpha}}$ is given on $C_c(\hat{\G}\times \Sigma) \subseteq (\A \rtimes_{\alpha}\Sigma)\rtimes_{\hat{\alpha}}\hat{\G} $ by
\[
 \hat{\hat{\alpha}}_t(F)(\gamma, s)= \overline{\gamma(t)}F(\gamma, s)
 \]
and so 
\begin{align*}
\Phi \big( \hat{\hat{\alpha}}_t(F)\big)(s, r)&= \int_{\hat{\G}} \alpha_{r}^{-1}\big( \hat{\hat{\alpha}}_t(F)(\gamma, s) \big) \overline{\gamma(s^{-1}r)} d \gamma \\
&=\alpha_t\Big(  \int_{\hat{\G}} \alpha_{rt}^{-1}\big( \hat{\hat{\alpha}}_t(F)(\gamma, s) \big) \overline{\gamma(s^{-1}rt)} d \gamma   \Big) \\
&=\alpha_t\big( \Phi(F)(s,rt) \big).
\end{align*}
This shows that $\Phi\circ  \hat{\hat{\alpha}}_t = (\rt\otimes \alpha)_t\otimes\id \circ \Phi$, i.e., $\Phi$ is equivariant. The conclusion now follows from Corollary~\ref{newStonecor}.
\end{proof}

We have arrived at the main result of this section. It makes a very precise connection between the well-established theory of semicrossed products and the theory of crossed products of arbitrary operator algebras, which is more recent. 

\begin{theorem} \label{stable isom}  If $(\A , \G,\Sigma, \alpha)$ is an ordered dynamical system, then we have a stable isomorphism
\[
\A\rtimes_{\alpha} \Sigma \sim_{s} \big(\A \otimes  \K(\G, \Sigma, \mu)\big)\rtimes_{\alpha\otimes \Ad \rho} \G. 
\]
\end{theorem}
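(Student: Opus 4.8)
The plan is to take the semicrossed-product Takai duality already proved in Proposition~\ref{Takai duality} and crossed-product it once more by $\G$; the resulting algebra is then identified, via the ordinary Takai duality for crossed products of operator algebras, with a stabilization of $\A\rtimes_\alpha\Sigma$. No new analysis of the CSL algebra $\K(\G,\Sigma,\mu)$ is needed beyond what went into Proposition~\ref{Takai duality}.

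First I would invoke Proposition~\ref{Takai duality}: it provides an equivariant completely isometric isomorphism
\[
\Phi\colon \big((\A\rtimes_\alpha\Sigma)\rtimes_{\hat\alpha}\hat\G,\ \G,\ \hat{\hat\alpha}\big)\longrightarrow \big(\A\otimes\K(\G,\Sigma,\mu),\ \G,\ \alpha\otimes\Ad\rho\big),
\]
where the double dual action $\hat{\hat\alpha}$ is regarded as an action of $\G$ under the Pontryagin identification $\hat{\hat\G}\cong\G$. Applying the full-group part of Lemma~\ref{phi id} to $\Phi$ (with ambient group $\G$), the integrated form $\Phi\rtimes\id$ is a completely isometric isomorphism
\[
\big((\A\rtimes_\alpha\Sigma)\rtimes_{\hat\alpha}\hat\G\big)\rtimes_{\hat{\hat\alpha}}\G\ \cong\ \big(\A\otimes\K(\G,\Sigma,\mu)\big)\rtimes_{\alpha\otimes\Ad\rho}\G.
\]

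Next I would apply the non-selfadjoint Takai duality of \cite[Theorem 4.4]{KR} to the dynamical system $(\A\rtimes_\alpha\Sigma,\ \hat\G,\ \hat\alpha)$; this is legitimate because $\hat\G$ is a second countable locally compact abelian group and $\A\rtimes_\alpha\Sigma$ is an operator algebra carrying the $\hat\G$-action $\hat\alpha$. It yields the stable isomorphism
\[
\A\rtimes_\alpha\Sigma\ \sim_s\ \big((\A\rtimes_\alpha\Sigma)\rtimes_{\hat\alpha}\hat\G\big)\rtimes_{\hat{\hat\alpha}}\hat{\hat\G},
\]
and under $\hat{\hat\G}\cong\G$ the iterated dual action on the right is precisely the $\G$-action $\hat{\hat\alpha}$ appearing in the previous display. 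Splicing the two displays together gives
\[
\A\rtimes_\alpha\Sigma\ \sim_s\ \big(\A\otimes\K(\G,\Sigma,\mu)\big)\rtimes_{\alpha\otimes\Ad\rho}\G,
\]
as required.

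The one point that needs care --- and essentially the only place where something could go wrong --- is the bookkeeping of the iterated dual actions: one must verify that the $\G$-action $\hat{\hat\alpha}$ with which Proposition~\ref{Takai duality} equips $(\A\rtimes_\alpha\Sigma)\rtimes_{\hat\alpha}\hat\G$ is, after the identification $\hat{\hat\G}\cong\G$, the very same $\hat{\hat\G}$-dual action used to form the outer crossed product in the ordinary Takai duality applied to $(\A\rtimes_\alpha\Sigma,\hat\G,\hat\alpha)$. Both are by definition the dual of the $\hat\G$-action $\hat\alpha$ on $\A\rtimes_\alpha\Sigma$, so they coincide; once this is noted, the two steps fit together verbatim and everything else is formal.
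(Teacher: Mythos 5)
Your proposal is correct and follows essentially the same route as the paper: first upgrade the equivariant isomorphism of Proposition~\ref{Takai duality} to an isomorphism of the crossed products by $\G$, then apply the non-selfadjoint Takai duality of \cite[Theorem 4.4]{KR} to $(\A\rtimes_{\alpha}\Sigma,\hat{\G},\hat{\alpha})$ and equate the two descriptions of $\big((\A\rtimes_{\alpha}\Sigma)\rtimes_{\hat{\alpha}}\hat{\G}\big)\rtimes_{\hat{\hat{\alpha}}}\G$. Your explicit appeal to (the full-group part of) Lemma~\ref{phi id} and your remark about matching the double dual action merely spell out steps the paper leaves implicit.
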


\begin{proof}
From Proposition~\ref{Takai duality} we obtain
\[
\Big(\big( \A \rtimes_{\alpha} \Sigma\big)\rtimes_{\hat{\alpha}}\hat{\G} \Big)\rtimes_{ \hat{\hat{\alpha}}} \G \simeq \big(\A \otimes \K ( \G , \Sigma, \mu ) \big)\rtimes_{\alpha\otimes \Ad \rho} \G.
\]
By applying non-selfadjoint Takai duality \cite[Theorem 4.4]{KR} on the dynamical system $( \A \rtimes_{\alpha} \Sigma, \hat{\alpha}, \hat{\G})$ we obtain
\[\Big(\big( \A \rtimes_{\alpha} \Sigma\big)\rtimes_{\hat{\alpha}}\hat{\G} \Big)\rtimes_{ \hat{\hat{\alpha}}} \G \simeq ( \A \rtimes_{\alpha} \Sigma)\otimes \K\big( L^2(\hat{\G})\big). 
\]
By ``equating" the right sides of the above equivalences we have 
\[
( \A \rtimes_{\alpha} \Sigma)\otimes \K\big( L^2(\hat{\G})\big) \simeq  \big(\A \otimes \K ( \G , \Sigma, \mu ) \big)\rtimes_{\alpha\otimes \Ad \rho} \G,
\]
which establishes the desired stable isomorphism.
\end{proof}

\begin{remark} Comparing the stable isomorphism of the usual Takai duality 
\[
\big((\A \rtimes_{\alpha}\Sigma)\rtimes_{\hat{\alpha}} \hat{\G}\big)\rtimes_{\hat{\hat{\alpha}}} \G\sim_s \A \rtimes_{\alpha}\Sigma
\]
with the stable isomorphism
\[
 \big(\A \otimes  \K(\G, \Sigma, \mu)\big)\rtimes_{\alpha\otimes \Ad \rho} \G \sim_s\A\rtimes_{\alpha} \Sigma
\] of Theorem~\ref{stable isom}, one may be tempted to conclude that $(\A \rtimes_{\alpha}\Sigma)\rtimes_{\hat{\alpha}} \hat{\G}$ is isomorphic to $\A \otimes  \K(\G, \Sigma, \mu)$ and that $\hat{\hat{\alpha}}$ is equivariant to $\alpha\otimes \Ad \rho$. We want to emphasize that as natural as this isomorphism might seem, its existence is not known to us. We were only able to verify that $(\A \rtimes_{\alpha}\Sigma)\rtimes_{\hat{\alpha}} \hat{\G}$ is stably isomorphic to $\A \otimes  \K(\G, \Sigma, \mu)$, which was sufficient for our purposes. \end{remark}

We want to understand better the structure of the algebra $\A \otimes  \K(\G, \Sigma, \mu)$ appearing in the stable isomorphism of Theorem~\ref{stable isom}. If $\G$ is not discrete, then Corollary~\ref{radicalalg} below says something definitive. But first we need the following two results.

\begin{proposition} \label{non atomic}
Let $(\G, \Sigma)$ be an ordered group and assume that $\G$ is not discrete, i.e., $\G$ has no isolated points. Then the lattice $\L(\G, \Sigma, \mu)$ has no atoms, i.e., no minimal intervals.
\end{proposition}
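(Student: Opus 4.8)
I would argue by contradiction. Suppose $\L:=\L(\G,\Sigma,\mu)$ has a minimal interval, i.e. there are increasing Borel sets $E_1\subseteq E_2$ with $P_{E_1}<P_{E_2}$ and no increasing Borel set $G$ satisfying $P_{E_1}<P_G<P_{E_2}$. Write $D:=E_2\setminus E_1$, so $\mu(D)>0$. For an \emph{arbitrary} increasing Borel set $H$ the set $E_1\cup(E_2\cap H)$ is again increasing and $P_{E_1}\leq P_{E_1\cup(E_2\cap H)}\leq P_{E_2}$; minimality therefore forces $P_{E_1\cup(E_2\cap H)}$ to be $P_{E_1}$ or $P_{E_2}$, which reads
\[
\mu(H\cap D)=0\qquad\text{or}\qquad\mu(D\setminus H)=0 .
\]
The sets $t\Sigma$ ($t\in\G$) are increasing, so, setting $A:=\{t\in\G:\mu(t\Sigma\cap D)=0\}$ and $B:=\{t\in\G:\mu(D\setminus t\Sigma)=0\}$, the displayed dichotomy produces a partition $\G=A\sqcup B$ (the two sets are disjoint since $t\in A\cap B$ would force $\mu(D)=0$, and $A$ is $\mu$-measurable by Tonelli's theorem).

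The main point is to show that $D\subseteq A$ up to a $\mu$-null set, equivalently $\mu(D\cap B)=0$. For this I would integrate the identity $\mu(D\setminus t\Sigma)=0$, valid for every $t\in B$, over $t\in D\cap B$. Using Tonelli's theorem and the elementary identity $\{t:s\in t\Sigma\}=s\Sigma^{-1}$, this rearranges to
\[
0=\int_{D}\mu\big((D\cap B)\setminus s\Sigma^{-1}\big)\,d\mu(s),
\]
so that $D\cap B\subseteq s\Sigma^{-1}$ up to a null set for $\mu$-almost every $s\in D$. Assume now $\mu(D\cap B)>0$ and choose such an $s$ inside $D\cap B$. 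Since $s\in B$ we have $D\subseteq s\Sigma$ up to a null set, hence $D\cap B\subseteq s\Sigma$ up to a null set; combining this with $D\cap B\subseteq s\Sigma^{-1}$ up to a null set gives
\[
D\cap B\subseteq s\Sigma\cap s\Sigma^{-1}=s(\Sigma\cap\Sigma^{-1})=\{s\}
\]
up to a null set. But $\G$ has no isolated points, so $\mu(\{s\})=0$, whence $\mu(D\cap B)=0$, contradicting our assumption. Therefore $\mu(D\cap B)=0$, i.e. $\mu(t\Sigma\cap D)=0$ for $\mu$-almost every $t\in D$.

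It remains to integrate this last relation over $D$. Tonelli's theorem, together with the substitution $u=t^{-1}s$ in the inner integral and the invariance of Haar measure, yields
\[
0=\int_{D}\mu(t\Sigma\cap D)\,d\mu(t)=\int_{\Sigma}\mu\big(D\cap Du^{-1}\big)\,d\mu(u),
\]
so $\mu(D\cap Du^{-1})=0$ for $\mu$-almost every $u\in\Sigma$, in particular for $\mu$-almost every $u\in\mathring{\Sigma}$. On the other hand $u\mapsto\mu(D\cap Du^{-1})$ is continuous near the identity with value $\mu(D)>0$ at $u=1$ (by continuity of translation in $L^{1}$, applied after replacing $D$ by a subset of finite positive measure when $\mu(D)=\infty$), so it is strictly positive on some neighbourhood $V$ of $1$. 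Since $1\in\Sigma=\overline{\mathring{\Sigma}}$, the open set $V\cap\mathring{\Sigma}$ is nonempty, hence of positive Haar measure, contradicting the previous sentence. This contradiction shows that $\L(\G,\Sigma,\mu)$ has no minimal intervals.

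The only genuinely delicate step is the middle paragraph — the elimination of $D\cap B$ — and it is precisely there that the hypothesis that $\G$ has no isolated points is used; the rest is bookkeeping with Tonelli's theorem and a Steinhaus‑type continuity estimate for Haar measure.
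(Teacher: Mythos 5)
Your proof is correct, but it takes a genuinely different route from the paper's. The paper argues constructively: working with the finite equivalent measure $m$, it uses the fact that boundaries of increasing sets are $m$-null, Laurie's lemma to choose a point $x$ with $m(x\Sigma\cap S)>0$ inside the open core $S=\mathring F\setminus\overline E$ of the interval, and then an Arveson-style construction of a sequence $x_n$ decreasing to $x$ so that the principal increasing sets $x_n\Sigma$ yield explicit intermediate projections $P_E\lneqq P_{E\cup x_n\Sigma}\lneqq P_F$. You instead argue by contradiction: the assumed gap gives, for every $t\in\G$, the dichotomy $\mu(t\Sigma\cap D)=0$ or $\mu(D\setminus t\Sigma)=0$ (with $D$ the difference set), and two Tonelli double-counting arguments combined with the antisymmetry $\Sigma\cap\Sigma^{-1}=\{1\}$, the vanishing of Haar measure on points of a non-discrete group, and the Steinhaus-type continuity of $u\mapsto\mu(D\cap Du^{-1})$ at the identity force $\mu(D)=0$. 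Your approach dispenses with Laurie's lemma, the boundary-null fact, and the auxiliary finite measure $m$ (you only use $1\in\Sigma=\overline{\mathring\Sigma}$ to make a neighbourhood of the identity meet $\mathring\Sigma$), at the price of being non-constructive, whereas the paper's argument actually exhibits the lattice elements strictly between $P_E$ and $P_F$. One bookkeeping remark on your last step: when $\mu(D)=\infty$ it is cleanest to note that $\mu(D'\cap D'u^{-1})\le\mu(D\cap Du^{-1})$ for any $D'\subseteq D$ of finite positive measure, so the almost-everywhere vanishing on $\Sigma$ (which needs no finiteness, being a Tonelli identity for nonnegative integrands) transfers to $D'$ by monotonicity and the continuity argument then runs verbatim for $D'$.
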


\begin{proof}
Since $\G$ is not discrete $\mu(g)= 0$, for any $ g \in \G$. We work with the unitarily equivalent lattice $\L(\G, \Sigma, m)$. Let $E \subseteq F\subseteq \G$ increasing so that $m( E \backslash F)>0$. Because $m(\vartheta E)=m(\vartheta F)=0$, we have $m(S)>0$, where $S\equiv \mathring F\backslash \overline{E}$.

Choose $x \in $ satisfying $m(I_x\cap U)>0$, where $I_x\equiv x \Sigma$ is the smallest increasing set containing $x$. (See \cite[Lemma 2]{Laurie}.) If $m(I_x\cap U) < m(U)$, then we see that 
\[
P_E \lneqq P_{E\cup I_x}\lneqq P_F
\]
and so the arbitrarily chosen interval $P_E \lneqq P_F$ is not minimal, as desired. Therefore we may assume that $m(I_x\cap U) = m(U)$.

We will construct a decreasing sequence $\{x_n\}_{n=1}^{\infty}$  converging to $x$ so that the sequence $m(I_{x_{n}}\cap U)\}_{n=1}^{\infty}$ increases to $m(I_x \cap U)$ without eventally being constant. Notice that if such a sequence $\{x_n\}_{n=1}^{\infty}$ is constructed, then any term $x_n$ with $m(I_{x_{n}}\cap U)>0$ satisfies 
\[
P_E \lneqq P_{E\cup I_{x_n}} \lneqq P_F
\]
and so once again the interval $P_E \lneqq P_F$ is not minimal.

To construct the sequence  $\{x_n\}_{n=1}^{\infty}$, we modify the ideas of \cite[Proposition 6.2]{Arv2}. 

Let $\{U_n\}_{n=1}^{\infty}$ be a decreasing sequence of open subsets of $U$ with $\cap_{n}U_n=\{x\}$.
Let $V_1 = U_1 \cap x\mathring \Sigma$ and let $x_1\in V_1$. Now note that $U_2 \cap x_1 \mathring \Sigma$ is an open neighborhood of $x$ and so $V_2\equiv  U_2 \cap  x\mathring \Sigma \cap x_1 \mathring \Sigma$ is a non-empty open set; choose $x_2 \in V_2$. Continuing in that manner, we construct non-empty open sets 
\[
V_n \equiv U_n \cap x\mathring \Sigma \cap\big(\cap_{i=1}^{n-1} x_i \mathring \Sigma\big) \subseteq U
\]
and $x_n \in V_n$, for all $n \in \bbN$. As in \cite[Proposition 6.2]{Arv2}, one sees that $\{x_n\}_{n=1}^{\infty}$ is decreasing to $x$. Furthermore 
\[
V_n \cap I_{x_{n-1}}\subseteq \{x_{n-1}\}, \mbox{ for all } n\in \bbN.
\]
(Indeed, $V_n \subseteq x_{n-1} \mathring \Sigma$ and so $v \leq x_{n-1}$, for all $v \in V_n$.) Therefore, 
\[
m(V_n \cap I_{x_{n-1}})= 0 , \mbox{ for all }n \in \bbN.
\]
Since $m$ does not annihilates open sets and $V_n  \subseteq U$, we conclude that $m(I_{x_{n}}\cap U) < m(U) = m(I_x\cap U)$. On the other hand, since $\{x_n\}_{n=1}^{\infty}$ is decreasing to $x$, $I_x = \overline{\cup_n I_{x_n}}$. Since the boundary of increasing sets has measure $0$, we have
\[
m\big(I_x \backslash \cup_{n=1}^{\infty}I_{x_n}\big)  = 0 
\]
and so 
\[
m\big((I_x \cap U )\backslash \cup_{n=1}^{\infty}(I_{x_n}\cap U) \big) = m\big((I_x \backslash \cup_{n=1}^{\infty}I_{x_n})\cap U\big) = 0  
\] 
as desired. This completes the proof.
\end{proof}

\begin{proposition} \label{diag int}
Let $\L$ be a non-atomic commutative subspace lattice and let $K \in \Alg \L$ be a compact operator. Then any operator of the form $A \otimes K$, $A \in B(\H)$, is quasinilpotent.
\end{proposition}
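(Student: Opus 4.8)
The plan is to first peel off the tensor factor $A$ and then establish quasinilpotence of the compact operator $K$ by a Ringrose--type argument in which the absence of atoms of $\L$ is used in an essential way. For the first reduction I would use that on the Hilbert space tensor product the operator norm is multiplicative: $\|(A\otimes K)^n\|=\|A^n\otimes K^n\|=\|A^n\|\,\|K^n\|\le\|A\|^n\|K^n\|$, so taking $n$-th roots gives $\spr(A\otimes K)\le\|A\|\cdot\spr(K)$. Hence it suffices to show that every compact $K\in\Alg\L$ is quasinilpotent, i.e. $\sigma(K)=\{0\}$.

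Now fix $\eps>0$ and choose a finite rank $F=\sum_{j=1}^m u_j\otimes v_j$ with $\|K-F\|<\eps/2$. The crucial structural point is that \emph{a maximal chain $\N$ in $\L$ is automatically a continuous nest}: by maximality (and the fact that all projections in a CSL commute) $\N$ is closed under arbitrary suprema and infima; and if some $N\in\N$ had $N_-:=\bigvee\{M\in\N\mid M<N\}\lneqq N$, then — again because everything in $\L$ commutes — no projection of $\L$ could lie strictly between $N_-$ and $N$, so $N-N_-$ would be an atom of $\L$, contrary to hypothesis; likewise there is no jump from above. For a continuous nest the standard argument applies: the function $N\mapsto\sum_j\bigl(\|Nu_j\|^2+\|Nv_j\|^2\bigr)$ is monotone and runs continuously from $0$ to its total value along the chain, hence it can be cut into finitely many increments each below $\delta^2$, and pulling these back along $\N$ yields $0=Q_0\lneqq Q_1\lneqq\dots\lneqq Q_n=I$ in $\N\subseteq\L$ with $\|P_iu_j\|,\|P_iv_j\|<\delta$ for all $i,j$, where $P_i:=Q_i-Q_{i-1}$ and $\delta$ is chosen so that $\delta\sum_j(\|u_j\|+\|v_j\|)<\eps/2$. (If $\H$ is non-separable one first restricts to the separable $\N''$-reducing subspace spanned by $\{Nu_j,Nv_j\}$.) Since each $Q_i\in\L$ and $K\in\Alg\L$ we get $P_iKP_j=0$ for $i>j$, i.e. $K$ is block upper triangular for $\H=\bigoplus_iP_i\H$, and $\|P_iKP_i\|\le\|P_iFP_i\|+\|K-F\|\le\sum_j\|P_iu_j\|\,\|P_iv_j\|+\eps/2<\eps$.

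Finally I would extract the spectral estimate. Write $K=D+T$ with $D=\sum_iP_iKP_i$ block diagonal ($\|D\|=\max_i\|P_iKP_i\|<\eps$) and $T=\sum_{i<j}P_iKP_j$ strictly block upper triangular, so $T^n=0$. For $|\lambda|>\eps$ the operator $\lambda I-D$ is invertible with block diagonal inverse, whence $(\lambda I-D)^{-1}T$ is again strictly block upper triangular and therefore nilpotent, so $I-(\lambda I-D)^{-1}T$ is invertible and so is $\lambda I-K=(\lambda I-D)\bigl(I-(\lambda I-D)^{-1}T\bigr)$. Thus $\sigma(K)\subseteq\{|\lambda|\le\eps\}$; letting $\eps\to0$ gives $\sigma(K)=\{0\}$, and the first paragraph then shows $A\otimes K$ is quasinilpotent. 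The only step that is more than bookkeeping is the passage from the non-atomicity of $\L$ to a continuous nest inside $\L$ together with the $\eps$-reducing finite partition of a given finite rank operator; both are classical facts of nest algebra theory (see Davidson, \emph{Nest Algebras}), and it is exactly there that the hypothesis on $\L$ enters.
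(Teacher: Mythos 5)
Your proof is correct, but it takes a genuinely different route from the paper. The paper disposes of the proposition in two lines by citing Katsoulis's integration theory for CSL algebras: since $\L$ has no atoms, the diagonal integral of the compact operator $K$ with respect to $\L$ vanishes by \cite[Theorem 5.4]{Kats}, whence $K$ is quasinilpotent, and the tensor factor is absorbed exactly as in your first paragraph. You instead avoid the CSL diagonal-integral machinery altogether: you pass to a maximal chain $\N\subseteq\L$, observe that non-atomicity (in the paper's sense of ``no minimal intervals'') together with maximality forces $\N$ to be a complete nest without jumps, note $\Alg\L\subseteq\Alg\N$, and then re-prove Ringrose's classical theorem that compact operators in a continuous nest algebra are quasinilpotent via the finite-partition/block-triangular resolvent estimate. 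Both arguments are sound; yours is more self-contained (Zorn plus standard nest-algebra facts from Davidson's book, which the paper itself invokes elsewhere via \cite[Corollary 6.9]{DavNest}), while the paper's citation is shorter and the diagonal-integral theorem it leans on yields finer spectral information that the authors reuse later (e.g.\ in computing $\Rad(\A\otimes\K(\G,\Sigma,\mu))$ in the discrete case). Two cosmetic points: closure of $\N$ under arbitrary suprema and infima comes from completeness of the strongly closed lattice $\L$ plus maximality, not from commutativity; and the ``continuity'' of $N\mapsto\sum_j\bigl(\|Nu_j\|^2+\|Nv_j\|^2\bigr)$ should be read as order-continuity coming from strong convergence of monotone nets of projections, which is exactly what the standard partition lemma for continuous nests uses — neither point is a gap.
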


\begin{proof}
Since $\L$ has no atoms, the diagonal integral of $K$ with respect to $\L$ equals $0$ by \cite[Theorem 5.4]{Kats}. Hence $K$ is quasinilpotent 
and the the conclusion follows.
\end{proof}

\begin{corollary} \label{radicalalg}
Let $(\G, \Sigma)$ be an ordered group and assume that $\G$ is not discrete. If $\A$ is any operator algebra, then $\A\otimes  \K ( \G , \Sigma, \mu ) $ is a radical operator algebra, i.e., a Banach algebra consisting of quasinilpotent operators.
\end{corollary}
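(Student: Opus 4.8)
The algebra $\A\otimes\K(\G,\Sigma,\mu)$ is by construction a norm-closed subalgebra of $B\big(\H_{\A}\otimes L^2(\G)\big)$, so ``radical operator algebra'' just asks that every one of its elements be quasinilpotent, and that is what the plan targets. Representing $\A$ completely isometrically on a Hilbert space $\H$ and using that $\K(\G,\Sigma,\mu)\subseteq\Alg\L(\G,\Sigma,\mu)$, one has
\[
\A\otimes\K(\G,\Sigma,\mu)\ \subseteq\ \mathcal{I}\ :=\ \overline{B(\H)\otimes\K(\G,\Sigma,\mu)}\ \subseteq\ \Alg\big(I_{\H}\otimes\L(\G,\Sigma,\mu)\big),
\]
where the commutative subspace lattice $I_{\H}\otimes\L(\G,\Sigma,\mu)=\{I_{\H}\otimes P:P\in\L(\G,\Sigma,\mu)\}$ on $\H\otimes L^2(\G)$ inherits the absence of atoms from $\L(\G,\Sigma,\mu)$ via Proposition~\ref{non atomic}. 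Thus it suffices to show that every $T\in\mathcal{I}$ is quasinilpotent; the whole issue is to upgrade Proposition~\ref{diag int}, which only handles a single elementary tensor $A\otimes K$, to the entire norm-closed span $\mathcal{I}$.

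For this I would fix $T\in\mathcal{I}$ with $\|T\|\le1$ and $\eps>0$, and choose $S=\sum_{i=1}^{m}A_i\otimes K_i$ with $A_i\in B(\H)$, $K_i\in\K(\G,\Sigma,\mu)$ and $\|T-S\|<\eps$. Each $K_i$ is a compact operator in the CSL algebra $\Alg\L(\G,\Sigma,\mu)$ whose lattice has no atoms, so \cite[Theorem~5.4]{Kats} applies through the vanishing of the diagonal integral of $K_i$, exactly as in the proof of Proposition~\ref{diag int}. Concretely, this produces a finite set of mutually orthogonal projections $E_1,\dots,E_n$ arising from a finite sublattice of $I_\H\otimes\L(\G,\Sigma,\mu)$, summing to the identity of $\H\otimes L^2(\G)$, with respect to which $\|E_r S E_r\|$ is as small as we please for every $r$, whence $\|E_r T E_r\|\le\|T-S\|+\|E_r S E_r\|<2\eps$. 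Since $T\in\Alg(I_{\H}\otimes\L(\G,\Sigma,\mu))$, these blocks are compatible with the partial order induced by the sublattice, so $T=D+N$ where $D=\sum_r E_r T E_r$ is block-diagonal with $\|D\|<2\eps$ and $N$ is block upper triangular, hence nilpotent of some index $n'$ depending only on the chosen sublattice. Expanding $T^{k}=(D+N)^{k}$ and discarding the words containing at least $n'$ factors equal to $N$ (which vanish), one bounds $\|T^{k}\|$ by a polynomial in $k$ times $(2\eps)^{\,k-n'}$; taking $k$-th roots and letting $k\to\infty$ gives $\spr(T)\le2\eps$. As $\eps>0$ was arbitrary, $\spr(T)=0$, so every element of $\mathcal{I}$ --- in particular of $\A\otimes\K(\G,\Sigma,\mu)$ --- is quasinilpotent.

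The step I expect to be the real obstacle is the construction of the partition $E_1,\dots,E_n$ together with the uniform smallness of the blocks $E_r S E_r$. For a commutative subspace lattice that is not a nest (for instance when $\G=\bbR^{2}$) one cannot merely take a common refinement of several chains, and the number of pieces must grow as $\eps\to0$, so one is forced to work with the diagonal-integral description of the radical of the atomless CSL algebra $\Alg(I_{\H}\otimes\L(\G,\Sigma,\mu))$ from \cite[Theorem~5.4]{Kats}, applied to operators that are only ``compact in the $L^2(\G)$-variable'' rather than genuinely compact. This is precisely the content that goes beyond the quasinilpotency of individual operators $A\otimes K$ furnished by Proposition~\ref{diag int}.
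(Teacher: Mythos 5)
Your argument is correct, and it supplies precisely the step that the paper's own proof leaves implicit. The paper deduces Corollary~\ref{radicalalg} in one line from Propositions~\ref{non atomic} and~\ref{diag int}, but those results only give quasinilpotence of the elementary tensors $A\otimes K$; since quasinilpotence survives neither sums nor norm limits, passing to arbitrary elements of the norm-closed algebra $\A\otimes\K(\G,\Sigma,\mu)$ requires a uniform argument, and that is exactly what you provide. You use the same two ingredients (atomlessness of $\L(\G,\Sigma,\mu)$ and the vanishing of the diagonal integral of a compact operator from \cite[Theorem 5.4]{Kats}), but in quantitative form: a single finite sublattice partition making all diagonal blocks of the approximant $S=\sum_i A_i\otimes K_i$, hence of $T$, small, followed by the Ringrose-type estimate $\spr(T)\le 2\eps$ via nilpotence of the strictly block upper triangular part. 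Two remarks. First, your argument (like the paper's Proposition~\ref{diag int} itself) needs the diagonal integral in \cite{Kats} to be a norm limit over refinements by finite sublattices, so that a common refinement handles $K_1,\dots,K_m$ simultaneously and $\bigl\|\sum_r E_r S E_r\bigr\|=\max_r\|E_r S E_r\|$; note also that the order on the atoms of a finite sublattice is antisymmetric (distinct atoms are separated by some lattice projection), which is what legitimizes the claim that $N$ is nilpotent. Second, your closing worry about operators that are only ``compact in the $L^2(\G)$ variable'' is not actually an obstacle in your own scheme: you apply \cite{Kats} only to the genuinely compact $K_i$ acting on $L^2(\G)$ and then ampliate the resulting partition by $I_{\H}$, which is all the argument requires.
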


\begin{proof}
The proof follows from the previous result and Proposition~\ref{non atomic}.
\end{proof}

If $\G$ is discrete then $\A\otimes  \K ( \G , \Sigma, \mu ) $ may not be a radical algebra anymore but nevertheless it is often the case that its Radical is non-trivial. For instance, in the case where $(\G, \Sigma)$ is either $(\bbZ, \bbZ^+)$ or $(\bbQ, \bbQ^+)$, one has 
\[
\Rad \big(\A\otimes  \K ( \G , \Sigma, \mu ) \big) = \A\otimes  \K_0 ( \G , \Sigma, \mu ) ,
\]
where $  \K_0 ( \G , \Sigma, \mu )$ denotes the compact operators in $ \K_0 ( \G , \Sigma, \mu )$ with zero diagonal. (This follows easily from either \cite[Corollary 6.9]{DavNest} or from \cite[Theorem 5.4]{Kats} as in Proposition~\ref{diag int}.) In particular, $\A\otimes  \K ( \G , \Sigma, \mu )$ is not semisimple.

The previous results have important consequences for the study of semicrossed products. For one thing, the stable isomorphism of Theorem~\ref{stable isom} implies that the lattice of the ideals of $\A \rtimes_{\alpha} \Sigma$ has the same (complete)  structure as the lattice of ideals for $\big( \A \otimes \K ( \G , \Sigma, \mu ) \big)\rtimes_{\alpha\otimes \Ad \rho} \G$. So in principle, question regarding the ideal structure of  $\A \rtimes_{\alpha} \Sigma$, e.g., semisimplicity, are delegated to corresponding questions about the ideals of $\big( \A \otimes \K ( \G , \Sigma, \mu ) \big)\rtimes_{\alpha\otimes \Ad \rho} \G$.  More importantly however, Theorem~\ref{stable isom}  leads for the first time to a ``meaningful' parametrization of the ideals of $\A \rtimes_{\alpha} \Sigma$ which are invariant by the dual action; this will be explained in Section~\ref{invariant section}. For the moment, we take the opposite route: we import results from the theory of semicrossed products in order to understand  and answer problems in crossed product theory. 

We begin with an example that answers a problem from the authors \cite[Problem 4]{KR}.

\begin{example} \label{Problem 5}
\textit{Let $\G$ be a non-discrete abelian group that admits an ordered group structure, e.g., $\G=\bbR$ and  $\Sigma =\bbR^+$. Then there exists a dynamical system $(\B, \beta, \G)$, with $\B$ being a radical Banach algebra so that $\B \rtimes_{\beta} \G$ is semisimple.}
\end{example}

\noindent Indeed by considering $\A=\bbC$, $\alpha =\id$ and $\G$ as above, the stable isomorphism of Theorem~\ref{stable isom} obtains 
\[
\bbC\rtimes_{\id} \Sigma \sim_{s} \K(\G,\Sigma, \mu)\rtimes_{\Ad \rho} \G .
\]
Now notice that $\bbC\rtimes_{\id} \Sigma\subseteq \bbC\rtimes_{\id} \G \simeq C_0(\hat{\G})$. Therefore $\bbC\rtimes_{\id} \Sigma$ is semisimple and so by Lemma~\ref{compact ideal}, $\K(\G, \Sigma, \mu)\rtimes_{\Ad \rho} \G$ is also semisimple.  On the other hand, Corollary \ref{radicalalg} shows that $\K(\G, \Sigma, \mu)$ is a radical Banach algebra. By considering $\B\equiv \K(\G, \Sigma, \mu)$ and $\beta=\Ad\rho$, the conclusion follows.

\vspace{.1in}

Example~\ref{Problem 5} implies in particular the existence of a dynamical system $(\A, \bbR, \alpha)$ so that $\A \rtimes_{\alpha} \bbR$ is semisimple but $\A$ fails to be such. This answers in the negative one of the questions in~\cite[Problem 4]{KR}. Since $\bbR$ is self-dual with respect to Pontryagin duality, another application of Takai duality gives a dynamical system $(\A, \bbR, \alpha)$ so that $\A$ is semisimple but $\A \rtimes_{\alpha} \bbR$ is not. Indeed, if $(\B, \G, \beta )$ is as in Example~\ref{Problem 5}, with $\G = \bbR$, then choose $\A \equiv \B \rtimes_{\beta} \bbR$ and $\alpha \equiv \hat{\beta}$ the dual action. Thus both questions in \cite[Problem 4]{KR} have a negative answer.

Even though many non-discrete groups admit an ordered group structure, there is a notable class that does not: the \textit{compact} abelian groups. Not only does Example~\ref{Problem 5} not apply to compact abelian groups but as we shall see in Corollary \ref{compact Problem 4}, nothing like Example~\ref{Problem 5} can happen there.

As we mentioned before, one of the motivating results in \cite{KR} is Theorem 6.2 which asserts the permanence of semisimplicity under crossed products by discrete groups, i.e., if $\G$ is discrete and $\A$ is semisimple, the $\A\cpf$ is also semisimple. Even though this is not a difficult result to establish, verifying that its converse fails required considerable effort~\cite[Example 6.8 and Theorem 6.9]{KR}. Given the developments of the present paper, additional counterexamples to the converse of \cite[Theorem 6.2]{KR} are not so difficult to come by now. (Note however that in contrast to the examples below, all counterexamples in \cite{KR} are unital algebras.)

\begin{example}
Let $(\X, \phi)$ be a dynamical system with $\X$ a compact  Hausdorff space and $\phi$ a minimal homeomorphism of $\X$. Then the operator algebra $\big(C(\X) \otimes  \K(\bbZ, \bbZ^+, \mu)\big)\rtimes_{\phi\otimes \Ad\rho} \bbZ$ is semisimple but $C(\X) \otimes  \K(\bbZ, \bbZ^+, \mu)$ is not semisimple.
\end{example}

\noindent Indeed, as we discussed in the proof of Corollary~\ref{radicalalg}, the algebra $C(\X) \otimes  \K(\bbZ, \bbZ^+, \mu)$ is not semisimple. On the other hand a result of Muhly \cite{M} shows that $C(\X)\rtimes_{\phi} \bbZ^+$ is semisimple and so $\big(C(\X) \otimes  \K(\bbZ, \bbZ^+, \mu)\big)\rtimes_{\phi\otimes \Ad\rho} \bbZ$ is semisimple by Theorem~\ref{stable isom} and the soon to be seen Lemma~\ref{compact ideal}. 

As it turns out, the previous example can be strengthened and it leads to the following characterization of semisimplicity for a natural class of crossed product algebras.

Let $X$ be a locally compact metric space and $\phi:X \rightarrow X$ be a homeomorphism. A point $x \in X$ is said to be recurrent if there exists an increasing sequence $\{ n_k\}_{k = 1}^{\infty}$ so that $\lim_{k} \phi^{(n_k)}(x) = x$.

\begin{theorem} Let $X$ be a locally compact metric space and $\phi:X \rightarrow X$ be a homeomorphism. Then the following are equivalent
\begin{itemize}
\item[(i)] $\big(C(X) \otimes \K(\bbZ, \bbZ^+, \mu)\big)\rtimes_{\phi\otimes \Ad\rho} \bbZ$ is semisimple
\item[(ii)] the recurrent points of $(X, \phi)$ are dense in $X$.
\end{itemize}
\end{theorem}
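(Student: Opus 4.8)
The plan is to transport the question, by means of Theorem~\ref{stable isom}, from the crossed product by $\bbZ$ to the semicrossed product by $\bbZ^{+}$, and then to invoke the known computation of the Jacobson radical of an analytic crossed product in terms of recurrent points. First I would apply Theorem~\ref{stable isom} to the ordered dynamical system $(C(X),\bbZ,\bbZ^{+},\phi)$ --- with $\phi$ viewed as an automorphism of $C(X)$ (and $C(X)=C_{0}(X)$ when $X$ is non-compact) --- to obtain the stable isomorphism
\[
C(X)\rtimes_{\phi}\bbZ^{+}\ \sim_s\ \big(C(X)\otimes\K(\bbZ,\bbZ^{+},\mu)\big)\rtimes_{\phi\otimes\Ad\rho}\bbZ .
\]
Since a stable isomorphism of operator algebras preserves semisimplicity --- this is Lemma~\ref{compact ideal}, used in the same way in Example~\ref{Problem 5} --- condition (i) is equivalent to the semisimplicity of the semicrossed product $C(X)\rtimes_{\phi}\bbZ^{+}$. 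Thus the theorem reduces to the statement that $C(X)\rtimes_{\phi}\bbZ^{+}$ is semisimple if and only if the recurrent points of $(X,\phi)$ are dense.

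For this last statement I would appeal to the description of $\Rad\big(C(X)\rtimes_{\phi}\bbZ^{+}\big)$ due to Donsig, Katavolos and Manoussos \cite{DKM}: in the Fourier series picture $\sum_{n\ge 0}f_{n}U^{n}$ (with $Uf=(f\circ\phi)U$), the radical is the closed ideal of all $a=\sum_{n\ge 1}f_{n}U^{n}$ for which every coefficient $f_{n}$ vanishes on the set $\mathrm{Rec}(\phi)$ of recurrent points. Hence the radical is $\{0\}$ --- i.e.\ the algebra is semisimple --- precisely when a continuous function vanishing on $\mathrm{Rec}(\phi)$ must be identically zero, that is, precisely when $\mathrm{Rec}(\phi)$ is dense in $X$; this is the desired equivalence (i)$\Leftrightarrow$(ii).

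Should a self-contained argument be wanted in place of citing \cite{DKM}, the two implications can be run by hand. For (ii)$\Rightarrow$(i): $\sum f_{n}U^{n}\mapsto f_{0}$ is a contractive homomorphism onto the semisimple algebra $C(X)$, so every $a\in\Rad$ has $f_{0}=0$; if some $f_{N}\ne 0$ with $N$ minimal, one picks an open set on which $|f_{N}|>\varepsilon$, a recurrent point $x_{0}$ in it, and uses the near-periodicity of the orbit segment $x_{0},\phi(x_{0}),\dots,\phi^{n_{k}-1}(x_{0})$ to produce finite orbit data on which a suitable translate $ba$ of $a$ behaves like a weighted cyclic shift whose spectral radius is bounded away from $0$, contradicting $a\in\Rad$. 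For $\neg$(ii)$\Rightarrow\neg$(i): choose a nonzero $f\in C(X)$ supported in an open set disjoint from $\mathrm{Rec}(\phi)$; a compactness-and-pigeonhole argument shows that $\sup_{x}\prod_{j=0}^{k-1}|f(\phi^{j}x)|$ decays geometrically (otherwise one extracts a recurrent point in $\supp f$), and, applied along orbit translates, that the entire closed two-sided ideal generated by $fU$ consists of quasinilpotents; so $fU$ is a nonzero element of the radical.

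The main obstacle is exactly this radical description for $C(X)\rtimes_{\phi}\bbZ^{+}$: granting \cite{DKM} it is immediate, while a direct proof requires the delicate ``$\Rightarrow$ semisimple'' direction --- manufacturing enough (approximately) finite-dimensional representations from a single recurrent point to certify that a nonzero element of the semicrossed product cannot be made quasinilpotent upon multiplication by a diagonal element.
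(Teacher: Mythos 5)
Your proposal is correct and follows exactly the paper's own route: reduce via the stable isomorphism of Theorem~\ref{stable isom} (with semisimplicity preserved by Lemma~\ref{compact ideal}) to the semicrossed product $C(X)\rtimes_{\phi}\bbZ^{+}$, and then invoke the Donsig--Katavolos--Manoussos description of its radical in terms of recurrent points. The extra self-contained sketch of the \cite{DKM} result is not needed, since the paper simply cites that reference as you do.
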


\begin{proof} 
The semisimplicity of $\big( C(X) \otimes \K(\bbZ, \bbZ^+, \mu)\big)\rtimes_{\phi\otimes \Ad\rho} \bbZ$ is equivalent to the semisimplicity of $C(X)\rtimes_{\phi}\bbZ^+$ because of the stable isomorphism. In \cite{DKM}, the semisimplicity of  $C(X)\rtimes_{\phi}\bbZ^+$ is shown to be equivalent to the density of the recurrent points and the conclusion follows.
\end{proof}

\section{Invariant ideals by the dual action and the Radical} \label{invariant section}

Let $(\X, \phi)$ be a dynamical system consisting of a homeomorphism $\phi$ of a locally compact space $\X$. If $\{ X_n\}_{n=1}^{\infty}$ is a sequence of closed subsets of $\X$ satisfying 
\begin{equation} \label{star}
X_{n+1}\cup \phi(X_{n+1})\subseteq X_n, \quad n=0, 1, \dots,
\end{equation}
then it is easy to see that all $a \in C(\X)\rtimes_{\phi}\bbZ^+$ whose Fourier expansion $a \sim \sum_{n=0}^{\infty} f_nU^n$ satisfies $f_n(X_n)=\{0\}$, for all $n=0, 1, \dots$, form a closed ideal of the semicrossed product $C(\X)\rtimes_{\phi}\bbZ^+$ which is invariant by the dual action of $\bbT$, i.e., gauge invariant. In \cite{Pet2} Peters shows that conversely, any gauge invariant ideal of $C(\X)\rtimes_{\phi}\bbZ^+$ arises that way and that the association between sequences $\{ X_n\}_{n=1}^{\infty}$  satisfying (\ref{star}) and gauge invariant ideals of $C(\X)\rtimes_{\phi}\bbZ^+$ is injective. Furthermore in the case where $\phi$ acts freely on $\X$, this scheme describes \textit{all} closed ideals of $C(\X)\rtimes_{\phi}\bbZ^+$.\footnote{Because of this result, a metatheorem would state that Peters' scheme is the broadest parametrization of ideals available for semicrossed products by $\bbZ^+$.} 

In this section we wish to extend  Peters' parametrization to more general semicrossed products in some meaningful way. Even though it is not difficult to see how to do this with discrete semigroups, moving to more general semigroups becomes tricky and literally impossible to verify whenever a Fourier series development is not available. As it turns out, the key object for doing this is the dynamical system $( \A\otimes  \K ( \G , \Sigma, \mu ), \G, \alpha \otimes\Ad \rho)$ of Theorem~\ref{stable isom}. Using this dynamical system we give a complete parametrization of the ideals invariant by the dual action; this appears in Theorem~\ref{Peters param} and it is one of the main results of this section. In this section we also (partially) solve another problem from \cite{KR} by showing that the diagonal of a crossed product is what it ought to be, i.e., $\dg\big(\A\cpf \big)=\dg(\A) \cpf$, provided that $\G$ is compact and abelian. Finally in Corollary~\ref{compact Problem 4} we see that the crossed product of a radical operator algebra by a compact abelian group remains a radical algebra. This strongly contrasts the situation with other non-compact groups (compare with Example~\ref{Problem 5}) and shows that the study of the Radical for a crossed product is rich in theorems as well, not just counterexamples.

In our next result, Lemma~\ref{compact ideal}, we gather some elementary facts regarding tensor products with the compact operators. Parts of Lemma~\ref{compact ideal} have already been used in the previous section.

\begin{lemma} \label{compact ideal}
Let $\A$ be an operator algebra and let $\K(\H)$ denote the compact operators acting on a Hilbert space $\H$. 
\begin{itemize}
\item[(i)] If $\J \subseteq \A \otimes \K(\H)$ is a closed ideal, then there exists a closed ideal $\I \subseteq \A$ so that $\J = \I \otimes \K(\H)$. 

\noindent In particular 
\begin{equation} \label{radical perm}
\Rad \big(  \A \otimes \K(\H)\big) = \Rad \A \otimes \K(\H)
\end{equation}
and therefore stable isomorphisms preserve semisimplicity.
\item[(ii)] We have $$\dg\big( \A \otimes \K(\H)\big) = \dg\A \otimes \K(\H).$$
\end{itemize}
\end{lemma}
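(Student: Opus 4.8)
The plan is to prove (i) first and then derive (ii) as a parallel argument (or a consequence). For (i), the key structural fact is that $\A \otimes \K(\H)$ is an operator algebra in which $\K(\H)$ enters only through finite-rank ``matrix units'' $e_{ij}$ (relative to a fixed orthonormal basis $\{\xi_i\}$ of $\H$). First I would fix a closed ideal $\J \subseteq \A \otimes \K(\H)$ and set $\I := \{ a \in \A : a \otimes e_{11} \in \J \}$. A quick check shows $\I$ is a closed ideal of $\A$: it is closed because $a \mapsto a \otimes e_{11}$ is a complete isometry onto the corner, and it is an ideal because for $b \in \A$ we have $(ba)\otimes e_{11} = (b \otimes e_{11})(a \otimes e_{11}) \in \J$ and similarly on the other side. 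Next I would show $\J = \I \otimes \K(\H)$. The inclusion $\I \otimes \K(\H) \subseteq \J$ follows since $a \otimes e_{ij} = (1 \otimes e_{i1})(a \otimes e_{11})(1 \otimes e_{1j}) \in \J$ for $a \in \I$ (using that $\A \otimes \K(\H) \subseteq \cenv(\A) \otimes \K(\H)$ is a subalgebra of a $\ca$-algebra containing the $1 \otimes e_{ij}$, so multiplication by these is legitimate even if $\A$ is nonunital), and $\I \otimes \K(\H)$ is the closed span of such elements. For the reverse inclusion, given $x \in \J$, the compressions $(1 \otimes e_{1i})\, x\, (1 \otimes e_{j1})$ lie in $\J \cap (\A \otimes e_{11}) = \I \otimes e_{11}$, so the ``matrix entries'' $x_{ij}$ of $x$ all lie in $\I$; since $x$ is the norm-limit of its finite truncations $\sum_{i,j \le n} x_{ij} \otimes e_{ij} \in \I \otimes \K(\H)$, we get $x \in \I \otimes \K(\H)$.

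With (i) established, the identity \eqref{radical perm} follows because $\Rad$ is the largest ideal consisting of quasinilpotent elements and, by (i), every ideal of $\A \otimes \K(\H)$ has the form $\I \otimes \K(\H)$; one then checks that $\I \otimes \K(\H)$ consists of quasinilpotents if and only if $\I \subseteq \Rad \A$ (using that the spectral radius is unchanged under the complete isometry $a \mapsto a \otimes e_{11}$ and the standard fact that quasinilpotence passes from the generating corner to the whole stable algebra). Hence $\Rad(\A \otimes \K(\H)) = (\Rad \A) \otimes \K(\H)$. The statement that stable isomorphisms preserve semisimplicity is then immediate: if $\A_1 \otimes \K(\H_1) \cong \A_2 \otimes \K(\H_2)$ and $\A_1$ is semisimple, then $\Rad(\A_1 \otimes \K(\H_1)) = 0$, hence $\Rad(\A_2 \otimes \K(\H_2)) = 0$, hence $(\Rad \A_2) \otimes \K(\H_2) = 0$, forcing $\Rad \A_2 = 0$.

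For (ii), I would run the same compression argument: the diagonal $\dg(\B)$ of an operator algebra $\B$ is $\B \cap \B^*$ inside $\cenv(\B)$, the largest $\ca$-subalgebra. Since $\cenv(\A \otimes \K(\H)) = \cenv(\A) \otimes \K(\H)$ and taking adjoints commutes with the matrix-unit compressions, an element $x \in \A \otimes \K(\H)$ lies in the diagonal iff all its entries $x_{ij}$ lie in $\dg(\A)$ — the entry-wise characterization from part (i) transfers verbatim because $(\A \otimes \K(\H)) \cap (\A \otimes \K(\H))^*$ is computed entry-wise. Thus $\dg(\A \otimes \K(\H)) = \dg(\A) \otimes \K(\H)$.

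The main obstacle I anticipate is the nonunital case: many of the manipulations above multiply by $1 \otimes e_{ij}$, which is not in $\A \otimes \K(\H)$ unless $\A$ is unital. The clean way around this is to carry out all computations inside the containing $\ca$-algebra $\cenv(\A) \otimes \K(\H)$ (or $\cenv(\A)\rtimes$-ambient, in the intended applications), where these multipliers are available and where one checks afterwards that the resulting elements land back in $\A \otimes \K(\H)$; alternatively, one replaces $1 \otimes e_{ij}$ by an approximate identity of $\K(\H)$ times $e_{ij}$ and passes to limits. Either route is routine but must be stated carefully to keep the proof honest for nonunital $\A$.
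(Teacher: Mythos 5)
Your handling of the ideal classification in (i) and of the diagonal in (ii) is essentially the paper's own argument: fix a corner by a matrix unit, define $\I$ from the compressed $(1,1)$ corner, and recover a general element of $\J$ (resp.\ of the diagonal) from its matrix entries via norm-convergent truncations; the nonunital issue you flag is dealt with in the paper exactly as you suggest, by passing to the containing $\ca$-algebra and, for the diagonal, by using a selfadjoint approximate unit of $\dg\A$.

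The gap is in your derivation of the radical identity (\ref{radical perm}). The inclusion $\Rad\big(\A\otimes\K(\H)\big)\subseteq \Rad\A\otimes\K(\H)$ does follow from (i) plus the fact that $a\mapsto a\otimes e_{11}$ preserves spectral radius. But the reverse inclusion --- equivalently the ``if'' direction of your claim that $\I\otimes\K(\H)$ consists of quasinilpotents whenever $\I\subseteq\Rad\A$ --- is the only nontrivial point, and the cited ``standard fact that quasinilpotence passes from the generating corner to the whole stable algebra'' does not deliver it as stated: quasinilpotence of the generators $a\otimes e_{ij}$, $a\in\Rad\A$, says nothing about a general element of $\I\otimes\K(\H)$, which is merely a norm limit of finite matrices over $\I$, and quasinilpotence is not preserved under norm limits. (The finite-matrix identity $\Rad(M_n(\A))=M_n(\Rad\A)$ is standard; the passage to the stabilization is precisely what is at stake.) The repair is to prove that the generators lie in the radical and then invoke that the radical is a closed ideal: for $x\in\Rad\A$ and $A\in\A\otimes\K(\H)$ one has
\[
\big(A(x\otimes e_{11})\big)^n \;=\; A(x\otimes e_{11})\big((ax)^{n-1}\otimes e_{11}\big),
\]
where $a\otimes e_{11}=(I\otimes e_{11})A(I\otimes e_{11})$, so $\spr\big(A(x\otimes e_{11})\big)\leq \lim_n\|(ax)^{n-1}\|^{1/n}=0$ because $x\in\Rad\A$; by the characterization of the radical as $\{x \colon \spr(bx)=0 \mbox{ for all } b\}$ this gives $x\otimes e_{11}\in\Rad\big(\A\otimes\K(\H)\big)$, hence $x\otimes e_{ij}$ as well, and then $\Rad\A\otimes\K(\H)\subseteq\Rad\big(\A\otimes\K(\H)\big)$ by closedness. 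This is exactly the computation the paper performs; with it inserted, your scheme (largest ideal of quasinilpotents plus the classification from (i)) goes through, and the consequence for stable isomorphisms is as you state.
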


\begin{proof}
Let $\{ \xi_i \}_{i \in \bbI}$ be an orthonormal basis for $\H$ and let $e_{i j}$ be the rank one operator mapping $\xi_j$ to $\xi_i$, $ i, j \in \bbI$. Assume that $\A$ acts on some Hilbert space $\H'$. Then $\A \otimes \K(\H)$ is generated as an operator algebra by all elementary tensors of the form $a \otimes e_{ ij} \in B(\H' \otimes \H)$, $ a \in \A$, $ i, j \in \bbI$.

To prove (i), fix an $i_0 \in \bbI$. It is easy to see that
\[
\I \equiv \{ s \in \A\mid s\otimes e_{i_0 i_0}= (I \otimes e_{i_0 i_0} )S (I \otimes e_{i_0 i_0} ), \mbox{ for some } S \in \J\}
\]
is the desired ideal.

We now verify that $\Rad \A \otimes \K(\H) \subseteq \Rad \big(  \A \otimes \K(\H)\big)$. Let $x \in \Rad \A$ and fix an $i_0 \in \bbI$. It is easy to see that given any $A \in \A \otimes\K(\H)$, there exists $a \in \A$ so that 
\[
a\otimes e_{i_0 i_0}  = (I \otimes e_{i_0 i_0}) A (I \otimes e_{i_0 i_0} )
\]
and so 
\begin{align*}
(A(x\otimes  e_{i_0 i_0}))^n &= A(x\otimes  e_{i_0 i_0})  \Big( (I \otimes e_{i_0 i_0}) A (I \otimes e_{i_0 i_0} ) (x\otimes e_{i_0 i_0})\Big)^{n-1} \\
    &=A(x\otimes  e_{i_0 i_0})  \big( (ax)^{n-1}\otimes e_{i_0 i_0}\big)
\end{align*}
for all $n \in \bbN$.
Hence 
\[
\begin{split}
\lim_n \|(A(x\otimes  e_{i_0 i_0}))^n\|^{1/n}&\leq \lim_n \| A(x\otimes  e_{i_0 i_0}) \|^{1/n} \limsup_n  \|(ax)^{n-1}\|^{1/n} \\
                                               &=\limsup_n  \|(ax)^{n}\|^{1/n} =0
                                               \end{split}
\]
because $x \in \Rad \A$. Hence $ x \otimes e_{i_0 i_0} \in \Rad\big( \A\otimes \K( \H)\big)$. From this it follows that $x \otimes e_{ij} \in \Rad\big( \A\otimes \K( \H)\big)$, for all $x \in \Rad \A$ and $i, j \in \bbI$. Hence $\Rad \A \otimes \K(\H) \subseteq \Rad \big(  \A \otimes \K(\H)\big)$. Since the reverse inclusion is easy, the proof of (i) is complete.

To prove (ii), assume first that $\A$ is unital  and let $a \in \dg\big( \A \otimes \K(\H)\big)$. Clearly $1\otimes e_{i j} \in \dg\big( \A \otimes \K(\H)\big)$ for all $i, j\in \bbI$ and so 
\[
a_{i, j}\otimes e_{i j} \equiv (1\otimes e_{i i} )a(1\otimes e_{j j} ) \in \dg\big( \A \otimes \K(\H)\big).
\]
Since $\{1\otimes e_{ii}\}_{i \in \bbI}$ is an approximate unit for $ \dg\big( \A \otimes \K(\H)\big)$, we are to show that $a_{i,j} \in \dg \A$, for all $i,j \in \bbI$.

Consider families $\{ b_{st}^{(n)}\}$, $s, t \in \bbI$, $n \in \bbN$, in $\A$ so that 
\[
a = \lim_n \sum _{s,t} ({b_{st}^{(n)}})^*\otimes e_{st}.
\]
Then, 
\begin{align*}
a_{ij}\otimes e_{ij}&= (1\otimes e_{i i} )a(1\otimes e_{j j} ) \\
			&= \lim_n (1\otimes e_{i i} )\big(\sum _{s, t} ({b_{st}^{(n)}})^*\otimes e_{st}\big)(1\otimes e_{j j} ) \\
			&= \lim_n \, ({b_{ij}^{(n)}})^*\otimes e_{ij}
			\end{align*}
and so $a_{i, j} =  \lim_n \, ({b_{ij}^{(n)}})^* \in \A^*$, as desired. 

If $\A$ is not unital, then one replaces $1\in \A$ with an approximate unit for the $\ca$-algebra $\diag \A$ consisting of selfadjoint operators and then repeats the same arguments as above by taking limits.
\end{proof}

Let $( \C, \G, \alpha)$ be a $\ca$-dynamical system with $\G$ abelian and let 
\[
\Phi \colon \
\big( \C \cpf\big)\rtimes_{\hat{\alpha}}\hat{\G} \longrightarrow \C \otimes \K \big( L^2 (\G)\big),
\]
be the isomorphism which implements the Takai duality. If $\A \subseteq \C$ is a closed subalgebra of $\C$ which is left invariant by the action $\alpha \colon \G \rightarrow \Aut \C$, then the proof of Theorem~\ref{Takai duality} shows that
\[
\Phi\left( ( \A \cpf\big){\rtimes}_{\hat{\alpha}}\hat{\G}\right)=\A \otimes \K \big( L^2 (\G)\big),
\]
and the restriction of $\Phi$ on $( \A \cpf\big)\rtimes_{\hat{\alpha}}\hat{\G}$ establishes the desired equivariance. Here the crossed product $\A \cpf$ is indeed isomorphic to the subalgebra of $\C \cpf $ generated by $C_c(\G , \A)$ and similarly for $( \A \cpf\big){\rtimes}_{\hat{\alpha}}\hat{\G}$.

Our next theorem generalizes and strengthens  a result of Gootman and Lazar  \cite[Corollary 2.2]{GL1}. 

\begin{theorem} \label{newGL}
Let $(\A , \G, \alpha)$ be a dynamical system with $\G$ a locally compact abelian group. Then the association 
\begin{equation} \label{lattice1}
\A \supseteq \J \longmapsto \J \cpf \subseteq \A \cpf
\end{equation}
establishes a complete lattice isomorphism between the $\alpha$-invariant ideals of $\A$ and the $\hat{\alpha}$-invariant ideals of $\A \cpf$.
\end{theorem}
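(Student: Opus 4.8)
The plan is to prove that the association $\Theta\colon\J\mapsto\J\cpf$ of \eqref{lattice1} is an order isomorphism. Since arbitrary intersections, and closed ideals generated by unions, of $\alpha$-invariant (resp.\ $\hat\alpha$-invariant) closed ideals remain $\alpha$-invariant (resp.\ $\hat\alpha$-invariant), both sides are complete lattices, and an order isomorphism between complete lattices automatically preserves arbitrary meets and joins; so it suffices to produce a monotone bijection with monotone inverse. First I would check that $\Theta$ is well defined and monotone: if $\J$ is an $\alpha$-invariant closed ideal of $\A$, then for $f\in C_c(\G,\J)$ and $g\in C_c(\G,\A)$ both convolutions $f\ast g$ and $g\ast f$ lie in $C_c(\G,\J)$ --- the second one using $\alpha_r(\J)=\J$ --- so the closure $\J\cpf$ of $C_c(\G,\J)$ inside $\A\cpf$ is a closed two-sided ideal, and it is $\hat\alpha$-invariant because $\hat\alpha_\gamma$ merely multiplies functions by the character $\overline{\gamma}$; monotonicity is clear. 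This argument is uniform over dynamical systems, so it applies verbatim to $(\A\cpf,\hat\G,\hat\alpha)$.

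The engine of the proof is Takai duality together with Lemma~\ref{compact ideal}(i). Set $\C\equiv\cenv(\A)$, equipped with the canonical extension of $\alpha$, so that $\A\subseteq\C$ is an $\alpha$-invariant closed subalgebra, and let $\Phi\colon(\C\cpf)\rtimes_{\hat\alpha}\hat\G\to\C\otimes\K(L^2(\G))$ be the Takai isomorphism. By the discussion preceding the theorem, for every $\alpha$-invariant closed subalgebra $\B\subseteq\C$ --- in particular for $\B=\J$ any $\alpha$-invariant ideal of $\A$ --- the restriction of $\Phi$ is an isomorphism $(\B\cpf)\rtimes_{\hat\alpha}\hat\G\cong\B\otimes\K(L^2(\G))$ which is equivariant for the double dual action $\hat{\hat\alpha}$ (an action of $\G$) on the left and $\alpha\otimes\Ad\rho$ on the right.

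For injectivity: if $\J_1\cpf=\J_2\cpf$, then taking the crossed product by $\hat\G$ and applying $\Phi$ yields $\J_1\otimes\K(L^2(\G))=\J_2\otimes\K(L^2(\G))$, whence $\J_1=\J_2$ by Lemma~\ref{compact ideal}(i). As this used nothing special about the system, the analogous forward map for $(\A\cpf,\hat\G,\hat\alpha)$, namely $\fJ\mapsto\fJ\rtimes_{\hat\alpha}\hat\G$, is injective too; this is the point that lets me close the loop without circularity. For surjectivity, let $\fJ$ be an $\hat\alpha$-invariant closed ideal of $\A\cpf$. By the first paragraph (applied to $(\A\cpf,\hat\G,\hat\alpha)$), $\fJ\rtimes_{\hat\alpha}\hat\G$ is a $\hat{\hat\alpha}$-invariant closed ideal of $(\A\cpf)\rtimes_{\hat\alpha}\hat\G$; transporting it by $\Phi$ yields an $(\alpha\otimes\Ad\rho)$-invariant closed ideal of $\A\otimes\K(L^2(\G))$, which by Lemma~\ref{compact ideal}(i) equals $\I\otimes\K(L^2(\G))$ for a unique closed ideal $\I\subseteq\A$. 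Since $\Ad\rho_s$ is an automorphism of $\K(L^2(\G))$ and $(\alpha_s\otimes\Ad\rho_s)\big(\I\otimes\K(L^2(\G))\big)=\alpha_s(\I)\otimes\K(L^2(\G))$, invariance forces $\alpha_s(\I)=\I$ for every $s\in\G$, so $\I$ is $\alpha$-invariant. Finally $\Phi\big((\I\cpf)\rtimes_{\hat\alpha}\hat\G\big)=\I\otimes\K(L^2(\G))=\Phi\big(\fJ\rtimes_{\hat\alpha}\hat\G\big)$, hence $(\I\cpf)\rtimes_{\hat\alpha}\hat\G=\fJ\rtimes_{\hat\alpha}\hat\G$, and injectivity of the forward map for $(\A\cpf,\hat\G,\hat\alpha)$ gives $\I\cpf=\fJ$. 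Thus $\Theta$ is a bijection with inverse $\fJ\mapsto\I$, which is monotone because each of its three constituent steps --- crossed product with $\hat\G$, transport by $\Phi$, and the passage $\I\otimes\K(L^2(\G))\mapsto\I$ of Lemma~\ref{compact ideal}(i) --- is monotone.

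The step I expect to need the most care is the bootstrapping just described: one must be sure that ``recover an ideal of $\A$ from its $\hat\G$-crossed product via $\Phi$ and Lemma~\ref{compact ideal}(i)'' is genuinely uniform in the dynamical system, so that its instance for $(\A\cpf,\hat\G,\hat\alpha)$ may legitimately be fed back into the surjectivity argument. This in turn rests on the bookkeeping that $\Phi$ does carry the ideals $(\J\cpf)\rtimes_{\hat\alpha}\hat\G$ exactly onto $\J\otimes\K(L^2(\G))$ and intertwines $\hat{\hat\alpha}$ with $\alpha\otimes\Ad\rho$ on the nose --- all of which is supplied by applying the discussion preceding the theorem to $\C=\cenv(\A)$.
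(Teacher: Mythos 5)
Your proposal is correct and follows essentially the same route as the paper: the Takai isomorphism $\Phi$ for $(\cenv(\A),\G,\alpha)$ together with Lemma~\ref{compact ideal}(i) recovers an $\alpha$-invariant ideal $\I$ with $\Phi(\fJ\rtimes_{\hat{\alpha}}\hat{\G})=\I\otimes\K(L^2(\G))$, and your ``injectivity of $\fJ\mapsto\fJ\rtimes_{\hat{\alpha}}\hat{\G}$ for the dual system'' is exactly the paper's second Takai map $\hat{\Phi}$ for $(\cenv(\A)\cpf,\hat{\G},\hat{\alpha})$, used to conclude $\fJ=\I\cpf$. The only difference is minor: where the paper checks preservation of intersections (and unions) directly with the two Takai maps, you invoke the order-theoretic fact that a monotone bijection with monotone inverse between complete lattices preserves arbitrary meets and joins, which is a legitimate and slightly cleaner way to finish.
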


\begin{proof} Let 
\begin{equation} \label{newGLiso}
\Phi \colon \
\big( \cenv(\A) \cpf\big)\rtimes_{\hat{\alpha}}\hat{\G} \longrightarrow \cenv(\A) \otimes \K \big( L^2 (\G)\big),
\end{equation}
be the isomorphism guaranteed by Takai duality, which is equivariant for the double dual action $\hat{\hat{\alpha}}$ of $\G$ on $(\cenv(\A) \cpf\big)\rtimes_{\hat{\alpha}}\hat{\G} $ and the action $\alpha \otimes \Ad \rho \colon\G \rightarrow \Aut \big( \cenv(A) \otimes  \K\big( L^2(\G)\big)\big)$. 

Let $\J \subseteq \A \cpf$ be an ideal which is invariant by the dual action $\hat{\alpha}$. Then the ideal 
\[
\J \rtimes_{\hat{\alpha}}\hat{\G} \subseteq \big( \A \cpf\big)\rtimes_{\hat{\alpha}}\hat{\G} 
\]
is mapped by $\Phi$ to an ideal of $\A \otimes \K \big( L^2 (\G)\big)$. By Lemma~\ref{compact ideal}, there exists an ideal $\I \subseteq \A$ so that $\Phi(\J \rtimes_{\hat{\alpha}}\hat{\G})  = \I \otimes \K(\H) $. Notice that since $\J \rtimes_{\hat{\alpha}}\hat{G}$ is invariant by the double dual action $\hat{\hat{\alpha}}$, the ideal $\I \otimes \K(\H)  = \Phi(\J \rtimes_{\hat{\alpha}}\G)$ is invariant by the action  $\alpha \otimes \Ad \rho$ (by equivariance). Hence $\I$ is invariant by $\alpha$.

Finally, both $(\I\cpf) \rtimes_{\hat{\alpha}}\hat{\G}$ and $ \J \rtimes_{\hat{\alpha}}\hat{\G}$ are mapped by $\Phi$ onto $\I \otimes \K(\H) $ and so 
\[
(\I\cpf) \rtimes_{\hat{\alpha}}\hat{\G} = \J \rtimes_{\hat{\alpha}}\hat{\G}
\]
as subsets of $\big(\cenv(\A)\cpf\big) \rtimes_{\hat{\alpha}}\hat{\G} $. Hence,
\[
\big((\I\cpf) \rtimes_{\hat{\alpha}}\hat{\G} \big)\rtimes_{\hat{\hat{\alpha}}}\G= \big(\J \rtimes_{\hat{\alpha}}\hat{\G}\big)\rtimes_{\hat{\hat{\alpha}}}\G
\]
as subsets of $\big((\cenv(\A)\cpf) \rtimes_{\hat{\alpha}}\hat{\G} \big)\rtimes_{\hat{\hat{\alpha}}}\G$ and so 
\[
\hat{\Phi} (\I \cpf) = \hat{\Phi}(\J),
\]
where 
\begin{equation} \label{newGLiso2}
\hat{\Phi} \colon \
\big( (\cenv(\A) \cpf )\rtimes_{\hat{\alpha}}\hat{\G}\big)\rtimes_{\hat{\hat{\alpha}}}\G \longrightarrow \big(\cenv(\A) \cpf \big)\otimes \K \big( L^2 (\hat{\G})\big),
\end{equation}
is the other map guaranteed by Takai Duality for the dynamical system  $( \cenv(\A)\cpf, \hat{\G}, \hat{\alpha})$. But this last equation simply means
\[
\big(\I\cpf\big)\otimes\K\big(L^2(\hat{\G}) \big)= \J \otimes\K\big(L^2(\hat{\G})\big)
\]
and so $\J =\I \cpf$. Thus the association in (\ref{lattice1}) is surjective. It is easily seen to be injective as well.

The fact that the association in (\ref{lattice1}) respects lattice operations is verified along similar lines; we do this with the lattice operation of ``intersection" and we leave the ``closure of the union" for the reader.

Assume that $\{ \I_k\}$ is a collection of $\alpha$-invariant ideals of $\A$. We are to prove that 
\[
\cap_k(\I_k \cpf)= (\cap_k \I_k)\cpf.
\]
We use again Takai duality by invoking both maps $\Phi$ and $\hat{\Phi}$ of (\ref{newGLiso}) and (\ref{newGLiso2}) respectively. Indeed notice that 
\begin{align*}
\big(\cap_k(\I_k \cpf)\big) \rtimes_{\hat{\alpha}} \hat{\G}& \subseteq \cap_k\big( (\I_k \cpf) \rtimes_{\hat{\alpha}} \hat{\G} \big)\\
				&=\cap_k\Phi^{-1}\big(\I_k \otimes\K\big( L^2(\G)\big)\big ) \\
				&= \Phi^{-1}\Big( \cap_k \big(\I_k \otimes\K\big( L^2(\G)\big)\big ) \Big)\\
				&=\Phi^{-1} \Big(\big(\cap_k \I_k\big ) \otimes\K\big( L^2(\G)\big)\Big)\\
				&=\big( (\cap_k \I_k)\cpf\big) \rtimes_{\hat{\alpha}}\hat{\G}.
\end{align*}
Since both $\big(\cap_k(\I_k \cpf)\big) \rtimes_{\hat{\alpha}} \hat{\G}$ and $\big( (\cap_k \I_k)\cpf\big) \rtimes_{\hat{\alpha}}\hat{\G}$ are $\hat{\hat{\alpha}}$-invariant, we have 
\[
\Big(\big(\cap_k(\I_k \cpf)\big) \rtimes_{\hat{\alpha}} \hat{\G}\Big) \rtimes_{\hat{\hat{\alpha}}} \G\subseteq \Big(\big( (\cap_k \I_k)\cpf\big) \rtimes_{\hat{\alpha}}\hat{\G}\Big) \rtimes_{\hat{\hat{\alpha}}}\G
\]
and so 
\[
\hat{\Phi}\big(\cap_k(\I_k \cpf)\big) \subseteq \hat{\Phi}\big((\cap_k \I_k)\cpf\big).
\]
This implies that $\cap_k(\I_k \cpf) \subseteq(\cap_k \I_k)\cpf$ and since the reverse inclusion is obvious, the conclusion follows.
\end{proof}

\begin{theorem} \label{Peters param}Let $(\G, \Sigma)$ be an ordered abelian group with Haar measure $\mu$ and let  $(\A, \G, \alpha)$ be a dynamical system. Then there exists a complete lattice isomorphism between the $\hat{\alpha}$-invariant ideals of $\A\rtimes_{\alpha} \Sigma$ and the $(\alpha\otimes \Ad \rho)$-invariant ideals of $\A \otimes \K(\G, \Sigma, \mu)$. 
\end{theorem}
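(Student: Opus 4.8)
The plan is to deduce the statement by composing two lattice correspondences already established in the paper: the generalized Gootman--Lazar isomorphism of Theorem~\ref{newGL}, applied to the semicrossed product regarded as an operator algebra carrying the dual action, and the equivariant isomorphism of Proposition~\ref{Takai duality}.

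First I would apply Theorem~\ref{newGL} to the dynamical system $(\A\rtimes_{\alpha}\Sigma,\ \hat{\G},\ \hat{\alpha})$, where $\hat{\G}$ is the Pontryagin dual of $\G$ (again second countable, locally compact and abelian) and $\hat{\alpha}$ is the dual action on the semicrossed product, i.e.\ the restriction to $\A\rtimes_{\alpha}\Sigma$ of the dual action on $\cenv(\A)\rtimes_{\alpha}\G$. This produces a complete lattice isomorphism
\[
\J\ \longmapsto\ \J\rtimes_{\hat{\alpha}}\hat{\G}
\]
between the $\hat{\alpha}$-invariant ideals of $\A\rtimes_{\alpha}\Sigma$ and the $\hat{\hat{\alpha}}$-invariant ideals of $(\A\rtimes_{\alpha}\Sigma)\rtimes_{\hat{\alpha}}\hat{\G}$, where $\hat{\hat{\alpha}}$ is the double dual action, viewed as an action of $\G$ via $\widehat{\hat{\G}}\cong\G$. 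Next I would invoke Proposition~\ref{Takai duality}, which supplies a complete isomorphism
\[
\Phi\colon\ (\A\rtimes_{\alpha}\Sigma)\rtimes_{\hat{\alpha}}\hat{\G}\ \longrightarrow\ \A\otimes\K(\G,\Sigma,\mu)
\]
intertwining $\hat{\hat{\alpha}}$ with $\alpha\otimes\Ad\rho$. A topological algebra isomorphism carries closed ideals to closed ideals and respects inclusions, intersections and closures of sums, while equivariance matches invariance for the two actions; hence $\I\mapsto\Phi(\I)$ is a complete lattice isomorphism from the $\hat{\hat{\alpha}}$-invariant ideals of $(\A\rtimes_{\alpha}\Sigma)\rtimes_{\hat{\alpha}}\hat{\G}$ onto the $(\alpha\otimes\Ad\rho)$-invariant ideals of $\A\otimes\K(\G,\Sigma,\mu)$. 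Composing the two gives the theorem, realized concretely by $\J\mapsto\Phi(\J\rtimes_{\hat{\alpha}}\hat{\G})$.

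The point that needs care is the legitimacy of applying Theorem~\ref{newGL} to $(\A\rtimes_{\alpha}\Sigma,\hat{\G},\hat{\alpha})$. Here $\A\rtimes_{\alpha}\Sigma$ is an operator algebra, being a norm-closed subalgebra of the $\ca$-algebra $\cenv(\A)\rtimes_{\alpha}\G$, and $\hat{\alpha}$ genuinely acts on it because multiplication by a character does not enlarge the support of a function, so $C_c(\Sigma,\A)$ and hence $\A\rtimes_{\alpha}\Sigma$ is $\hat{\alpha}$-invariant inside $\cenv(\A)\rtimes_{\alpha}\G$. This is exactly the setting of the paragraph following Lemma~\ref{compact ideal}, so the proof of Theorem~\ref{newGL} goes through \emph{mutatis mutandis} with $(\A,\G,\alpha)$ replaced by $(\A\rtimes_{\alpha}\Sigma,\hat{\G},\hat{\alpha})$, using the non-selfadjoint Takai duality \cite[Theorem 4.4]{KR} for this new system in place of the instance employed there. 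I expect this bookkeeping — checking that no step secretly needs an explicit model of $\cenv(\A\rtimes_{\alpha}\Sigma)$, and confirming that the double dual action coincides with the one named in Proposition~\ref{Takai duality} — to be the only, and rather minor, obstacle; everything else is formal.
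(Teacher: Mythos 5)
Your argument is correct and is essentially identical to the paper's own proof: apply Theorem~\ref{newGL} to the dual system $(\A\rtimes_{\alpha}\Sigma,\hat{\G},\hat{\alpha})$ and then transport the resulting lattice of $\hat{\hat{\alpha}}$-invariant ideals through the equivariant isomorphism $\Phi$ of Proposition~\ref{Takai duality}. The caveat you raise is not an obstacle, since Theorem~\ref{newGL} is stated for arbitrary operator-algebra dynamical systems and the paper applies it to $(\A\rtimes_{\alpha}\Sigma,\hat{\G},\hat{\alpha})$ exactly as you do.
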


\begin{proof} Apply Theorem~\ref{newGL} to the dynamical system $(\A \rtimes_{\alpha}\Sigma, \hat{\G}, \hat{\alpha})$ to obtain a complete lattice isomorphism between the $\hat{\alpha}$-invariant ideals of $\A\rtimes_{\alpha} \Sigma$ and the $\hat{\hat{\alpha}}$-invariant ideals of $\big(\A\rtimes_{\alpha} \Sigma\big) \rtimes_{\hat{\alpha}}\hat{\G}$. 

Consider now the map 
\[
\Phi \colon \
\big( \A \rtimes_{\alpha} \Sigma\big)\rtimes_{\hat{\alpha}}\hat{\G} \longrightarrow \A \otimes \K ( \G , \Sigma, \mu ),
\]
of Proposition~\ref{Takai duality} which is equivariant for the double dual action $\hat{\hat{\alpha}} \colon \G \rightarrow \Aut \big((\A \rtimes_{\alpha} \Sigma \big)\rtimes_{\hat{\alpha}}\hat{\G}\big) $ and the action $\alpha \otimes \Ad \rho \colon\G \rightarrow \Aut \big( \A \otimes  \K ( \G , \Sigma, \mu )\big)$. By equivariance, this map $\Phi$ establishes a complete lattice isomorphism between the $\hat{\hat{\alpha}}$-invariant ideals of $\big(\A\rtimes_{\alpha} \Sigma\big) \rtimes_{\hat{\alpha}}\hat{\G}$ and the $(\alpha\otimes \Ad \rho)$-invariant ideals of $\A \otimes \K(\G, \Sigma, \mu)$. This completes the proof.
\end{proof}

\begin{remark}
It is instructive to see how Theorem~\ref{Peters param} recovers Peters' scheme for parametrizing the gauge invariant ideals of a semicrossed product by $\bbZ$.

Let  $(\A, \bbZ, \alpha)$ be a dynamical system and let $\I\subseteq \A\otimes \K(\bbZ, \bbZ^+, \mu)$ be an $(\alpha \otimes \Ad \rho )$-invariant ideal. We need to uniquely associate to $\I$ a sequence $\{ \I_n\}_{n =1}^{\infty}$ of ideals of $\A$ which satisfy
\begin{equation} \label{bigstar}
\I_n\subseteq \I_{n+1}\cap\alpha(\I_{n+1}), \,\, n=0, 1, \dots .
\end{equation}
(In the case where $\A=C(\X)$ and $\alpha(f) = f \circ \phi^{-1}$, $f \in C(\X)$, this is easily seen to lead to a sequence of closed sets satisfying (\ref{star}).)

The algebra $\A\otimes \K(\bbZ, \bbZ^+, \mu)$ is isomorphic to the collection of all lower triangular infinite matrices 
$A = [\{a_{i j}\}]_{i, j \in \bbZ}$ with entries in $\A$, so that the finite truncations $[\{a_{i j}\}]_{i, j =-n}^{n}$, $n \in \bbZ^+$, approximate $A$ in norm. (Compare with Example~\ref{example alg}.) Given an $(\alpha \otimes \Ad \rho )$-invariant ideal $\I\subseteq \A\otimes \K(\bbZ, \bbZ^+, \mu)$ we define $\I_{i j}\subseteq \A$ to be the collection of all $(i, j)$-entries of $\I$; it is easily seen that each $\I_{i, j}$ is an ideal of $\A$. Also let $\hat{\I}_{i,j}$ be the subspace consisting of all elements of $\I$ with all entries apart from the $(i, j)$-entry being zero. Clearly the $(i, j)$-entry of $\hat{\I}_{i,j}$ varies over $\I_{i j}$. 

Since $(\alpha\otimes\Ad\rho)_s(\I) = \I$, $s \in \G$, we obtain
\[
\alpha_s (\I_{n ,0}) = \I_{-s +n ,-n}, \, \, n=0, 1, \dots.
\]
Thus $\I_{n ,0}$ completely determines all other ideals appearing on the $n$th lower diagonal. By multiplying $\hat{\I}_{n,0}$ from the left with $e_{n+1, n}\in\A\otimes \K(\bbZ, \bbZ^+, \mu)$, i.e., the matrix which is $1$ at the $(n+1, n)$-entry and $0$ elsewhere, we obtain 
\[
\I_{n,0}\subseteq \I_{n+1,0}, \,\,  n=0, 1, \dots
\] and similarly, by multiplying $\hat{\I}_{n+1,1}$from the right with $e_{1,0}$ we obtain
\[
\alpha_{-1}(\I_{n,0})=\I_{n+1,1}\subseteq \I_{n+1,0}, ,\,  n=0, 1, \dots.
\]
Putting together these two inclusions we obtain
\[
\I_{n, 0}\subseteq \I_{n+1,0}\cap\alpha(\I_{n+1,0}), \,\, n=0, 1, \dots .
\]
By setting $\I_n\equiv \I_{n,0}$ we obtain (\ref{bigstar}). Clearly the association between $(\alpha \otimes \Ad \rho )$-invariant ideals of $\A\otimes \K(\bbZ, \bbZ^+, \mu)$ and sequences $\{ \I_n\}_{n =1}^{\infty}$ satisfying (\ref{bigstar}) is injective. It is easily seen to be surjective as well.

It is not difficult to extend (\ref{bigstar}) to arbitrary discrete groups and modify Peters' scheme to hold there, even without appealing to our Theorem~\ref{Peters param}. (The proof requires an argument with a Fejer-type kernel.) Nevertheless nothing like that seems to work when one moves beyond discrete groups; there is neither an obvious substitute for (\ref{bigstar}) nor a Fejer-type kernel to work with. The use of $\A \otimes \K(\G, \Sigma, \mu)$ seems to be the only appropriate choice.
\end{remark}

In Theorem~\ref{newGL} we gave a complete description of all $\alpha$-invariant ideals for the crossed product $\A\cpf$. Automorphism invariant ideals are a central theme in non-selfadjoint operator algebra theory with most important being the (Jacobson) Radical. Such ideals have been studied extensively~\cite{Don, DKM, M, Rin, O1, O2}.  

The following is immediate from Theorem~\ref{newGL}.

\begin{corollary} \label{Radideal}
Let $\fG = (\A , \G, \alpha)$ be a dynamical system with $\G$ a locally compact abelian group. Then there exists an $\alpha$-invariant ideal $\J_{\fG} \subseteq \A$ so that 
\[
\Rad\big(\A \cpf\big)= \J_{\fG}\cpf  .
\]
\end{corollary}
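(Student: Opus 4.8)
The plan is to feed the Jacobson radical into the lattice isomorphism of Theorem~\ref{newGL}. Recall that in any Banach algebra the Jacobson radical is a closed two-sided ideal which admits a purely algebraic description (for instance, as the intersection of all maximal modular left ideals, or equivalently as $\{x : ax \text{ is quasinilpotent for every } a\}$); in particular it is mapped onto itself by every algebra automorphism. Now for each character $\gamma \in \hat{\G}$ the dual automorphism $\hat{\alpha}(\gamma) \in \Aut(\A\cpf)$ therefore satisfies $\hat{\alpha}(\gamma)\big(\Rad(\A\cpf)\big) = \Rad(\A\cpf)$, so $\Rad(\A\cpf)$ is an $\hat{\alpha}$-invariant closed ideal of $\A\cpf$.

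By Theorem~\ref{newGL}, the association $\J \longmapsto \J\cpf$ is a bijection (indeed a complete lattice isomorphism) from the $\alpha$-invariant ideals of $\A$ onto the $\hat{\alpha}$-invariant ideals of $\A\cpf$. Since $\Rad(\A\cpf)$ belongs to the range of this bijection by the previous paragraph, there is a (unique) $\alpha$-invariant ideal $\J_{\fG}\subseteq \A$ with $\J_{\fG}\cpf = \Rad(\A\cpf)$, which is exactly the assertion.

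There is essentially no obstacle here: the entire content of the corollary is already packaged in Theorem~\ref{newGL}, and the only thing requiring a (one-line) verification is that $\Rad(\A\cpf)$ is $\hat{\alpha}$-invariant, which is automatic since the radical is invariant under all automorphisms. If one wishes, one can also remark that the ideal $\J_{\fG}$ so produced is uniquely determined by $\fG$ (again by injectivity in Theorem~\ref{newGL}), which justifies the notation.
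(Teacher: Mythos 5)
Your proposal is correct and is exactly the argument the paper intends: the corollary is stated as immediate from Theorem~\ref{newGL}, the only observation needed being that $\Rad(\A\cpf)$ is $\hat{\alpha}$-invariant because the radical is preserved by every automorphism, so it lies in the range of the bijection $\J \mapsto \J\cpf$. Nothing is missing.
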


One would hope that $\J_{\fG} = \Rad \A$ but as we saw in the previous section, this fails in many ways. The relation between $\J_{\fG}$ and $ \Rad \A$ is one of the main themes of this paper. If $\G$ is discrete one can easily see that $\J_{\fG} \subseteq  \Rad \A$, with the inclusion being proper in certain cases. In the case of a compact group, the situation reverses.

\begin{theorem} \label{compact rradical}
Let $\fG = (\A , \G, \alpha)$ be a dynamical system with $\G$ a compact abelian group. Then there exists an $\alpha$-invariant ideal $  \J_{\fG} \supseteq \Rad \A$ so that 
\[
\Rad\big(\A \cpf\big)= \J_{\fG}\cpf  .
\]
\end{theorem}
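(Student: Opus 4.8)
The plan is to reduce the theorem to the single assertion that the closed ideal $(\Rad\A)\cpf$ of $\A\cpf$ consists of quasinilpotent operators. Granting this, the theorem follows at once: by Corollary~\ref{Radideal} there is an $\alpha$-invariant ideal $\J_{\fG}\subseteq\A$ with $\Rad(\A\cpf)=\J_{\fG}\cpf$, and since $\Rad\A$ is a closed $\alpha$-invariant ideal of $\A$ (algebra automorphisms preserve the Jacobson radical), $(\Rad\A)\cpf$ is a closed two-sided ideal of $\A\cpf$. If every element of this ideal is quasinilpotent, then $(\Rad\A)\cpf\subseteq\Rad(\A\cpf)=\J_{\fG}\cpf$, and the complete lattice isomorphism of Theorem~\ref{newGL}, being an order isomorphism, forces $\Rad\A\subseteq\J_{\fG}$, which is what we want.

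So the heart of the matter is to show that the crossed product of a radical operator algebra by a compact abelian group is again a radical operator algebra; we apply this to $\B\equiv\Rad\A$ equipped with the restriction of $\alpha$. First I would apply non-selfadjoint Takai duality \cite[Theorem 4.4]{KR} to the dynamical system $(\B,\G,\alpha)$, obtaining an isomorphism
\[
\big(\B\cpf\big)\rtimes_{\hat{\alpha}}\hat{\G}\;\simeq\;\B\otimes\K\big(L^2(\G)\big).
\]
Since a radical Banach algebra is its own Jacobson radical, Lemma~\ref{compact ideal}(i) then gives
\[
\Rad\Big(\B\otimes\K\big(L^2(\G)\big)\Big)=\Rad(\B)\otimes\K\big(L^2(\G)\big)=\B\otimes\K\big(L^2(\G)\big),
\]
so $\big(\B\cpf\big)\rtimes_{\hat{\alpha}}\hat{\G}$ is a radical Banach algebra.

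Here is where compactness of $\G$ is used decisively: $\hat{\G}$ is then discrete, so $\B\cpf$ embeds isometrically as a closed subalgebra of $\big(\B\cpf\big)\rtimes_{\hat{\alpha}}\hat{\G}$. A closed subalgebra of a radical operator algebra again consists of quasinilpotent operators (the spectral radius $\lim_n\|x^n\|^{1/n}$ of an element is computed inside the subalgebra), so $\B\cpf=(\Rad\A)\cpf$ is a radical operator algebra. Being a closed two-sided ideal of $\A\cpf$ all of whose elements are quasinilpotent, it is contained in $\Rad(\A\cpf)$ — which is precisely the assertion isolated in the first paragraph.

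I expect the only delicate points to be bookkeeping rather than substance: one must check that $(\Rad\A)\cpf$ — the closed subalgebra of $\cenv(\A)\cpf$ generated by $C_c(\G,\Rad\A)$ — is simultaneously the object to which Takai duality is applied and the ideal of $\A\cpf$ under study, and that the embedding $\B\cpf\hookrightarrow\big(\B\cpf\big)\rtimes_{\hat{\alpha}}\hat{\G}$ is the canonical one available for discrete groups; neither requires a new idea. As a byproduct, taking $\A$ itself radical yields $\J_{\fG}=\A$, i.e. $\A\cpf$ is a radical operator algebra whenever $\A$ is and $\G$ is compact abelian.
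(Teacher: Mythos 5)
Your argument is correct, but it gets to the key containment $(\Rad \A)\cpf \subseteq \Rad\big(\A\cpf\big)$ by a route that differs from the paper's in an interesting way. The paper applies Corollary~\ref{Radideal} to the \emph{dual} (discrete) system $(\A\cpf, \hat{\G}, \hat{\alpha})$ to write $\Rad\big((\A\cpf)\rtimes_{\hat{\alpha}}\hat{\G}\big)=\I\rtimes_{\hat{\alpha}}\hat{\G}$ with $\I\subseteq\Rad(\A\cpf)$, computes the image of this radical under the Takai isomorphism $\Phi$ as $\Rad\A\otimes\K\big(L^2(\G)\big)$ via Lemma~\ref{compact ideal}(i), matches it with $\Phi\big(((\Rad\A)\cpf)\rtimes_{\hat{\alpha}}\hat{\G}\big)$, and then uses discreteness of $\hat{\G}$ to cancel the $\hat{\G}$-crossed product and conclude $\I=(\Rad\A)\cpf$; the final step $\Rad\A\subseteq\J_{\fG}$ is obtained by one more application of Takai duality rather than by citing the lattice isomorphism of Theorem~\ref{newGL}, but that is a cosmetic difference. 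You instead prove directly that $(\Rad\A)\cpf$ is a radical operator algebra --- in effect establishing Corollary~\ref{compact Problem 4} first and deducing the theorem from it, reversing the paper's logical order --- by applying Takai duality to $(\Rad\A,\G,\alpha)$ itself, invoking Lemma~\ref{compact ideal}(i) to see that $\big((\Rad\A)\cpf\big)\rtimes_{\hat{\alpha}}\hat{\G}\simeq\Rad\A\otimes\K\big(L^2(\G)\big)$ is radical, and then using the canonical embedding of $(\Rad\A)\cpf$ into its crossed product by the discrete group $\hat{\G}$ together with the permanence of quasinilpotence under passage to closed subalgebras; the containment in the radical then follows from the standard Banach-algebra fact that an ideal consisting of quasinilpotents lies in the Jacobson radical. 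Both proofs rest on the same two pillars (Takai duality plus Lemma~\ref{compact ideal}(i), with compactness used only through discreteness of $\hat{\G}$), but your version avoids the discrete-case input $\I\subseteq\Rad(\A\cpf)$ and the ideal-matching under $\Phi$, replacing them with a soft spectral-radius argument, while the paper's version yields the exact identification $\I=(\Rad\A)\cpf$ of the radical of the double crossed product as a by-product. The bookkeeping points you flag (that $(\Rad\A)\cpf$, defined inside $\cenv(\A)\cpf$, is the crossed product of the invariant ideal $\Rad\A$ to which duality applies, and that the coefficient algebra embeds completely isometrically in a crossed product by a discrete group) are indeed routine and are the same identifications the paper itself relies on in the discussion preceding Theorem~\ref{newGL}.
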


\begin{proof}
In light of Corollary \ref{Radideal}, we have that $\Rad\big(\A \cpf\big)= \J_{\fG} \cpf$ for some ideal $\J_{\fG} \subseteq \A$; we only need to verify that $\Rad \A \subseteq \J_{\fG}$.

Since $\hat{\G}$ is discrete, there exists ideal $\I \subseteq \Rad\big( \A \cpf\big)$ so that 
\[
\Rad \big( (\A \cpf )\rtimes_{\hat{\alpha}}\hat{\G}\big) = \I \rtimes_{\hat{\alpha}}\hat{\G}.
\]
Consider the isomorphism
\[
\Phi \colon \
\big( \cenv(\A) \cpf\big)\rtimes_{\hat{\alpha}}\hat{\G} \longrightarrow \cenv(\A) \otimes \K \big( L^2 (\G)\big)
\]
guaranteed by Takai duality and notice that
\begin{align*}
\Phi\big(\Rad \big( (\A \cpf )\rtimes_{\hat{\alpha}}\hat{\G}\big)\big)&= \Rad \big( \Phi\big((\A \cpf )\rtimes_{\hat{\alpha}}\hat{\G}\big)\big)\\
			&=\Rad\big( \A \otimes  \K ( L^2 (\G))\big)\\
			&=\Rad\A \otimes  \K ( L^2 (\G))
\end{align*}
by Lemma~\ref{compact ideal} (i). Hence 
\[
\Phi ( \I \rtimes_{\hat{\alpha}}\hat{\G} )= \Rad\A \otimes  \K ( L^2 (\G)).
\]
Also 
\[
\Phi\big((\Rad \A \cpf )\rtimes_{\hat{\alpha}}\hat{\G}\big) = \Rad\A \otimes  \K ( L^2 (\G))
\]
and since $\Phi$ is an isomorphism,
\[
 \I \rtimes_{\hat{\alpha}}\hat{\G} = (\Rad \A \cpf )\rtimes_{\hat{\alpha}}\hat{\G}.
 \]
 Since $\hat{\G}$ is discrete, we obtain $$\I = \Rad \A \cpf.$$
However, $\I \subseteq \Rad\big( \A \cpf\big)$ and so 
\[
\Rad \A \cpf \subseteq  \Rad\big( \A \cpf\big) = \J_{\fG}\cpf.
\]
By another application of Takai duality $\Rad \A \subseteq \J_{\fG}$, as desired.
\end{proof}

The following contrasts strongly Example~\ref{Problem 5}.

\begin{corollary} \label{compact Problem 4}
Let $\fG = (\A , \G, \alpha)$ be a dynamical system with $\G$ a compact abelian group. If $\A$ is a radical Banach algebra, then $\A \cpf$ is also a radical Banach algebra.
\end{corollary}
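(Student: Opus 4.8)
The plan is to obtain this as an immediate formal consequence of Theorem~\ref{compact rradical}. First I would recall that a Banach algebra $\B$ is radical precisely when $\Rad \B = \B$, equivalently when every element of $\B$ is quasinilpotent; this is the standard characterization of the Jacobson radical as the largest ideal consisting of quasinilpotent (topologically nilpotent) elements. Since $\A$ is assumed to be a radical Banach algebra, we have $\Rad \A = \A$.

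Next I would apply Theorem~\ref{compact rradical} to the dynamical system $\fG = (\A , \G, \alpha)$ (here $\G$ is compact abelian, so the hypothesis is met): this produces an $\alpha$-invariant ideal $\J_{\fG}$ of $\A$ satisfying $\Rad \A \subseteq \J_{\fG}$ and $\Rad(\A \cpf) = \J_{\fG} \cpf$. Since $\J_{\fG}$ is an ideal of $\A$ we certainly have $\J_{\fG} \subseteq \A$, while $\Rad \A = \A$ gives the reverse containment $\A = \Rad \A \subseteq \J_{\fG}$. Hence $\J_{\fG} = \A$, and therefore $\Rad(\A \cpf) = \A \cpf$, i.e., every element of $\A \cpf$ is quasinilpotent and $\A \cpf$ is a radical Banach algebra.

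I do not expect any genuine obstacle: all of the substantive work is already done in Theorem~\ref{compact rradical} (and beneath it in Corollary~\ref{Radideal} and Lemma~\ref{compact ideal}), so the corollary is a one-line deduction once the notion of ``radical'' is unwound as $\Rad(\cdot) = (\cdot)$. The only point worth a sentence of care is this identification of ``radical Banach algebra'' with the equality $\Rad(\A \cpf) = \A \cpf$. For perspective I would also emphasize that the statement is genuinely special to compact $\G$ and cannot follow from any argument uniform over all locally compact abelian groups: Example~\ref{Problem 5} exhibits, for $\G = \bbR$, a radical $\B$ with $\B \cpf$ semisimple, so compactness of $\G$ is essential.
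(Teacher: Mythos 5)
Your deduction is correct and is precisely how the paper intends the corollary to follow: Theorem~\ref{compact rradical} gives $\Rad\A \subseteq \J_{\fG} \subseteq \A$ with $\Rad(\A\cpf)=\J_{\fG}\cpf$, and $\Rad\A=\A$ forces $\J_{\fG}=\A$, hence $\Rad(\A\cpf)=\A\cpf$. The paper offers no separate proof because it regards exactly this one-line argument as immediate, so your proposal matches the paper's approach.
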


The technique of Theorem~\ref{compact rradical} also allows us to address an issue left open in \cite[Problem 5]{KR}. Indeed in \cite[Proposition 5.11]{KR} we calculated the diagonal of a crossed product $\A \cpf$ in the case where $\G$ is an amenable and discrete group. Here we consider another important class of groups.

\begin{theorem} \label{compact radical}
Let $\A$ be a unital operator algebra and $\alpha \colon \G \rightarrow \Aut \A$ be the continuous action of a compact abelian group. Then 
\[
\dg\big(\A\cpf \big)=\dg(\A) \cpf .
\]
\end{theorem}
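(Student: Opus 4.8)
The plan is to run the argument of Theorem~\ref{compact rradical} with the diagonal in place of the radical, using Lemma~\ref{compact ideal}(ii) where that proof used Corollary~\ref{Radideal}. First I would dispose of the easy inclusion $\dg(\A)\cpf\subseteq\dg(\A\cpf)$, which needs no compactness: each $\alpha_s$ extends to a $*$-automorphism of $\cenv(\A)$ that fixes $\A$, hence fixes $\A\cap\A^{*}=\dg(\A)$, so $\dg(\A)$ is an $\alpha$-invariant $\ca$-subalgebra of $\A$; therefore $\dg(\A)\cpf$ is a $\ca$-subalgebra of $\A\cpf$ and, being a $\ca$-algebra, is contained in the largest one, namely $\dg(\A\cpf)$. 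Applying the same observation to the dynamical system $(\A\cpf,\hat{\G},\hat{\alpha})$ gives
\[
\dg(\A\cpf)\rtimes_{\hat{\alpha}}\hat{\G}\ \subseteq\ \dg\big((\A\cpf)\rtimes_{\hat{\alpha}}\hat{\G}\big),
\]
which is the starting point for the reverse inclusion.

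Next I would compute the right-hand side by Takai duality. Let $\Phi$ be the $*$-isomorphism of $(\cenv(\A)\cpf)\rtimes_{\hat{\alpha}}\hat{\G}$ onto $\cenv(\A)\otimes\K\big(L^{2}(\G)\big)$ implementing the duality, as recalled in the paragraph preceding Theorem~\ref{newGL}; restricted to $(\A\cpf)\rtimes_{\hat{\alpha}}\hat{\G}$ it is a complete isometry onto $\A\otimes\K\big(L^{2}(\G)\big)$, so it carries diagonals to diagonals and Lemma~\ref{compact ideal}(ii) gives
\[
\Phi\Big(\dg\big((\A\cpf)\rtimes_{\hat{\alpha}}\hat{\G}\big)\Big)=\dg\big(\A\otimes\K(L^{2}(\G))\big)=\dg(\A)\otimes\K\big(L^{2}(\G)\big).
\]
Since $\dg(\A)$ is an $\alpha$-invariant closed subalgebra of $\cenv(\A)$, the remark preceding Theorem~\ref{newGL} also gives $\Phi\big((\dg(\A)\cpf)\rtimes_{\hat{\alpha}}\hat{\G}\big)=\dg(\A)\otimes\K\big(L^{2}(\G)\big)$. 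Equating and applying $\Phi^{-1}$, I obtain $\dg\big((\A\cpf)\rtimes_{\hat{\alpha}}\hat{\G}\big)=(\dg(\A)\cpf)\rtimes_{\hat{\alpha}}\hat{\G}$, and therefore
\[
\dg(\A\cpf)\rtimes_{\hat{\alpha}}\hat{\G}\ \subseteq\ (\dg(\A)\cpf)\rtimes_{\hat{\alpha}}\hat{\G}
\]
as subsets of $(\cenv(\A)\cpf)\rtimes_{\hat{\alpha}}\hat{\G}$.

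Finally I would descend from $\hat{\G}$, and this is the only place compactness of $\G$ is used. Since $\hat{\G}$ is discrete, $(\cenv(\A)\cpf)\rtimes_{\hat{\alpha}}\hat{\G}$ carries the canonical faithful conditional expectation $E$ onto the coefficient algebra $\cenv(\A)\cpf$, and $E$ maps $(\dg(\A)\cpf)\rtimes_{\hat{\alpha}}\hat{\G}$ into $\dg(\A)\cpf$. An element $x\in\dg(\A\cpf)\subseteq\cenv(\A)\cpf$ satisfies $E(x)=x$, while the displayed inclusion puts $x$ in $(\dg(\A)\cpf)\rtimes_{\hat{\alpha}}\hat{\G}$, so $E(x)\in\dg(\A)\cpf$; hence $\dg(\A\cpf)\subseteq\dg(\A)\cpf$, which together with the first step yields the asserted equality. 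I expect the main obstacle to be the bookkeeping of the second step --- making sure the single map $\Phi$ simultaneously implements $\Phi\big((\dg(\A)\cpf)\rtimes_{\hat{\alpha}}\hat{\G}\big)=\dg(\A)\otimes\K(L^{2}(\G))$ and intertwines the diagonals --- but this is precisely what the remark before Theorem~\ref{newGL} and Lemma~\ref{compact ideal}(ii) are built to supply, so the conditional-expectation step that follows is routine.
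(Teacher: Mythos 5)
Your argument is correct, and its core is the same Takai-duality computation as the paper's: conjugate everything into $(\cenv(\A)\cpf)\rtimes_{\hat{\alpha}}\hat{\G}$ via $\Phi$, use that $\Phi$ carries $(\dg(\A)\cpf)\rtimes_{\hat{\alpha}}\hat{\G}$ onto $\dg(\A)\otimes\K\big(L^2(\G)\big)$ and carries diagonals to diagonals, and invoke Lemma~\ref{compact ideal}(ii). Where you genuinely diverge is in how the discrete dual group is handled. The paper argues by contradiction: it assumes $\dg(\A)\cpf\subsetneq\dg(\A\cpf)$, asserts that the two $\hat{\alpha}$-crossed products are then distinct, and identifies $\dg\big(\A\cpf\big)\rtimes_{\hat{\alpha}}\hat{\G}$ with $\dg\big((\A\cpf)\rtimes_{\hat{\alpha}}\hat{\G}\big)$ by citing \cite[Proposition 5.11]{KR} (the diagonal formula for crossed products by discrete groups), so that both sides have the same image under $\Phi$. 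You avoid that citation and the contradiction: you use only the trivial inclusion $\dg(\A\cpf)\rtimes_{\hat{\alpha}}\hat{\G}\subseteq\dg\big((\A\cpf)\rtimes_{\hat{\alpha}}\hat{\G}\big)$, compute the latter directly through $\Phi$, and then descend with the canonical conditional expectation $E$ onto the coefficient algebra, which exists because $\hat{\G}$ is discrete (and abelian, hence amenable, so full and reduced crossed products agree and $E$ is defined on the full crossed product). This buys two things: your proof is more self-contained (it does not presuppose the discrete-group diagonal result of \cite{KR}, and it supplies the justification the paper leaves implicit when it asserts that distinct invariant subalgebras yield distinct crossed products), and it produces along the way the identity $\dg\big((\A\cpf)\rtimes_{\hat{\alpha}}\hat{\G}\big)=(\dg(\A)\cpf)\rtimes_{\hat{\alpha}}\hat{\G}$. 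The paper's route is shorter once \cite[Proposition 5.11]{KR} is granted. The only bookkeeping point to make explicit in your write-up is that $E$ maps $(\dg(\A)\cpf)\rtimes_{\hat{\alpha}}\hat{\G}$ into $\dg(\A)\cpf$ because finite sums $\sum_{\gamma} b_{\gamma}u_{\gamma}$ with $b_{\gamma}\in\dg(\A)\cpf$ are dense there and $E$ is contractive; with that noted, the argument is complete.
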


\begin{proof}
Clearly, 
\[
\dg\A \cpf \subseteq \dg\big(\A\cpf \big).
\]
By way of contradiction assume that the above inclusion is proper. Both $\dg\A \cpf $ and $\dg\big(\A\cpf \big)$ are invariant by the dual action $\hat{\alpha}$ when considered as subalgebras of $ \cmax(\A)  \cpf $. Since they are distinct, 
\begin{equation} \label{comp ineq}
\big(\dg\A \cpf\big)\rtimes_{\hat{\alpha}}\hat{\G} \neq  \dg\big(\A\cpf \big)\rtimes_{\hat{\alpha}}\hat{\G}.
\end{equation}
Now we examine separately each side of (\ref{comp ineq}) under the action of the isomorphism
\[
\Phi \colon \
\big( \cmax(\A) \cpf\big)\rtimes_{\hat{\alpha}}\hat{\G} \longrightarrow \cmax(\A) \otimes \K \big( L^2 (\G)\big)
\]
guaranteed by Takai duality.  

For the left side, we apply Takai duality for the $\ca$-dynamical system $(\dg\A, \G, \alpha)$ and we obtain
\begin{align*}
\Phi\Big(\big(\dg\A \cpf\big)\rtimes_{\hat{\alpha}}\hat{\G} \Big)&= \dg\A \otimes  K(L^2(\G))  \\
											&=  \dg \big( \A \otimes K(L^2(\G)) \big)
\end{align*}
by Lemma~\ref{compact ideal}.

For the right side of (\ref{comp ineq}) notice that since $\hat{\G}$ is discrete, \cite[Proposition 5.11]{KR} implies that 
\[
\dg\big(\A\cpf \big)\rtimes_{\hat{\alpha}} \hat{\G} = \dg\Big(\big(\A\cpf \big)\rtimes_{\hat{\alpha}}\hat{\G}\Big)
\]
and so
\begin{align*}
\Phi\Big(\dg\big(\A\cpf \big)\rtimes_{\hat{\alpha}} \hat{\G}\Big)	&= \Phi\Big( \dg\big( (\A\cpf )\rtimes_{\hat{\alpha}}\hat{\G}\big)\Big) \\
						&= \dg \Phi\big( (\A\cpf )\rtimes_{\hat{\alpha}}\hat{\G})\big) \\
								&= \dg \big( \A \otimes K(L^2(\G)) \big).
\end{align*}
By comparing sides we have 
\[
\Phi\Big(\big(\dg\A \cpf\big)\rtimes_{\hat{\alpha}}\hat{\G} \Big) = \Phi\Big(\dg\big(\A\cpf \big)\rtimes_{\hat{\alpha}} \hat{\G}\Big)
\]
and since $\Phi$ is an isomorphism
\[
\big(\dg\A \cpf\big)\rtimes_{\hat{\alpha}}\hat{\G} = \dg\big(\A\cpf \big)\rtimes_{\hat{\alpha}}\hat{\G}.
\]
But this contradicts (\ref{comp ineq}) and the conclusion follows.
\end{proof}

%%%%%%%%%%%%%%%%%%%%%%%%%%%%%%%%%%%%%%%%%%%%%%
\section{Radically tight dynamical systems} \label{Section tight}

Trying to describe the Radical of an operator algebra is not an easy job and the literature of non-selfadjoint operator algebras is dotted with such, sometimes remarkable, efforts. We do not expect the situation to be any easier with crossed products. Quite the opposite actually and the evidence perhaps can be seen in the proof of \cite[Theorem 6.9]{KR}.  

Nevertheless our previous investigations suggest a different approach regarding the Radical. Indeed, as we saw in Corollary~\ref{Radideal} of the previous section, even though we have $\Rad\big(\A \cpf\big)= \J_{\fG}\cpf$, in general $\J_{\fG} \neq \Rad \A$. This leads to the following definition.

\begin{definition}
A dynamical system $(\A, \G, \alpha)$ where $\G$ is a locally compact group is said to be {\em radically tight} if and only if 
\[
\Rad(\A \rtimes_\alpha \G) = (\Rad \A) \rtimes \G.
\]
\end{definition}

The idea in general is that we don't attempt to describe the Radical of a crossed product but instead we try to verify one of its attributes in the hopes that it says something about the dynamical system. So far we have seen two important classes of radically tight dynamical systems:
\begin{itemize}
\item[(i)] $(\A, \G, \alpha)$ with $\A$ semisimple and $\G$ discrete abelian
\item[(ii)] $(\A, \G, \alpha)$ with $\A$ radical and $\G$ compact abelian.
\end{itemize}
The objective of this section is to provide another class of radically tight dynamical systems. This is done in Theorem~\ref{tight}.
In particular, we turn now to crossed products of TAF algebras. As was seen in \cite{KR}, this is a particularly tractable class of operator algebras to study.  

\vskip 12 pt
An approximately finite-dimensional (AF) C$^*$-algebra is the direct limit of finite-dimensional C$^*$-algebras, $\fA = \varinjlim (\fA_n, \varphi_n)$. We restrict to the case of regular embeddings, without loss of generality we will assume that matrix units are mapped to sums of matrix units. 
For each $n\in \bbN$ assume $\A_n\subset \fA_n$ is the subalgebra of upper triangular matrices and that $\varphi_n(\A_n) \subseteq \A_{n+1}$.

The subalgebra $\A = \varinjlim (\A_n, \varphi_n)$ of $\fA$ is called a {\em strongly maximal triangular AF algebra} or strongly maximal TAF for short. If $\fA_n = M_{k_n}$ and $\A_n = \T_{k_n}$ then $\fA$ is called a {\em uniformly hyperfinite (UHF) algebra} and $\A$ is called a {\em strongly maximal triangular UHF} or TUHF algebra. In this strongly maximal case, there is a well behaved diagonal $\C = \A \cap \A^*$ of $\A$ which is also given as a direct limit $\C = \varinjlim (\C_n,\varphi_n)$ where $\C_n = \A_n \cap \A_n^*$.

A detailed source for such limit algebras is Power's book \cite{Pow}. In the context of this paper, we will be working with strongly maximal TAF algebras with regular ($*$-extendable) embeddings as outlined in the previous paragraphs.

\vskip 12 pt

This class, in particular the TUHF case, is particularly tractable when dealing with crossed products since its automorphisms have a nice structure. Let 
\[
\A = \overline{\bigcup_{n\in\bbN} \T_{k_n}} \subset \overline{\bigcup_{n\in\bbN} M_{k_n}} = \fA
\] be a strongly maximal TUHF algebra with regular $*$-extendable embeddings. If $\alpha$ is a completely isometric automorphism of $\A$ then it extends to a $*$-automorphism of $\fA$ because $\fA = \cenv(\A)$, since it is simple. By the theory in \cite{Pow}, for each $n$ there is a diagonal unitary $\lambda_n \in \C$ and an integer $N\geq n$ such that $\lambda_n \alpha|_{\T_{k_n}}\lambda_n^* : \T_{k_n} \rightarrow \T_{k_N}$ is a regular $*$-extendable embedding taking matrix units to sums of matrix units. Of course, when dealing with projections we do not need to consider the action of this diagonal unitary since it goes away.

\vskip 12 pt

While the Radical still remains quite elusive in the class of TAF algebras, semisimplicity in the strongly maximal case was characterized by Donsig \cite[Theorem 4]{Don}. This condition is known as Donsig's criterion and says that a strongly maximal TAF algebra $\A$ is semisimple if and only if every matrix unit $e\in \A$ has a {\em link}, that is, $e\A e \neq \{0\}$. Conversely, a matrix unit $e\in \A$ will be called {\em linkless} if $e\A e = \{0\}$. 

In \cite{Hudson}, Hudson studied many different ideals and radicals in TAF algebras. He showed that the closed span of the linkless matrix units is in fact an ideal, called the linkless ideal, which is the smallest of all the radicals \cite[Theorem 4.1]{Hudson}. He also gave examples in the TAF class showing which radicals differed. 

In the spirit of Hudson's paper, the following theorem is an extension of Donsig's criterion for TUHF algebras and completely characterizes the Radical as the linkless ideal.

%%%%%%%
\begin{theorem}\label{TUHFRadical}
Suppose $\A$ is a strongly maximal TUHF algebra with regular $*$-extendable embeddings. $\Rad \A$ is equal to the linkless ideal.
\end{theorem}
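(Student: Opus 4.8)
Both inclusions must be established. That $\L_0\subseteq\Rad\A$, where $\L_0$ denotes the linkless ideal (the closed linear span of the linkless matrix units of $\A$), is \cite[Theorem 4.1]{Hudson}, so the whole content of the theorem is the reverse inclusion $\Rad\A\subseteq\L_0$. To attack this I would pass to the coordinate/groupoid model of the TUHF algebra as in \cite{Pow}: write $\A=\varinjlim(\T_{k_n},\varphi_n)$ with diagonal $\C=\varinjlim\C_n$ of Cantor spectrum, realize the matrix units of $\A$ as compact--open bisections $e_V$ (these are monotone partial isometries for the order on the spectrum, since $\A$ is triangular), recall that such an $e_V$ is linkless exactly when $e_V\A e_V=\{0\}$, and that $\L_0$ is the closed span of the linkless ones. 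A useful book-keeping remark is that \emph{linkedness is a finite--level phenomenon} --- a nonzero product $e_Ve_We_V$ is witnessed among level--$N$ matrix units for some $N$ --- whereas \emph{linklessness persists to every deeper level}; consequently $\L_0=\overline{\bigcup_n\L_0^{(n)}}$, where $\L_0^{(n)}$ is the (matrix-unit) ideal of $\varphi_{n,\infty}(\T_{k_n})\cong\T_{k_n}$ generated by those level--$n$ matrix units that are linkless in $\A$.

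The plan is to reduce $\Rad\A\subseteq\L_0$ to one assertion about matrix units: \emph{if $e$ is a linked matrix unit of $\A$, then $\spr(ec)=1$ for some $c\in\A$} --- equivalently $e$ lies on a ``cycle'', $(ec)^k\neq 0$ for all $k$, i.e. the bisection $V_e\circ V_c$ carries an infinite path --- \emph{with the resulting spectral-radius bound robust under adding to $e$ a term supported on strictly finer matrix units}. Granting this, the argument would run as follows. First, $\Rad\A$ is ``diagonal--free'': since $\A$ is triangular the diagonal expectation $E_\C$ restricts to a homomorphism on $\A$, so $\|x^m\|\ge\|E_\C(x)^m\|=\|E_\C(x)\|^m$ and hence $\|E_\C(x)\|\le\spr(x)=0$ for $x\in\Rad\A$, forcing $E_\C(x)=0$; this disposes of the diagonal matrix units, which are always linked. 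Next, for $x\in\Rad\A$ and any level--$n$ linked matrix unit $e$, the compression $z:=(ee^{*})\,x\,(e^{*}e)\in\Rad\A$ has the form $\lambda e+(\text{strictly finer terms})$ with $\lambda$ the $e$--Fourier coefficient of $x$; applying the robust cycle statement to $z$ gives $0=\spr(zc)\ge|\lambda|$, so $\lambda=0$. As $n$ varies this says exactly that $E_n(x)\in\L_0^{(n)}$ for all $n$, whence $x=\lim_nE_n(x)\in\overline{\bigcup_n\L_0^{(n)}}=\L_0$. (Equivalently, the whole reverse inclusion is the assertion that the Banach algebra $\A/\L_0$ is semisimple.)

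The crux, and the place where the homogeneity of the UHF core is indispensable, is the cycle statement, and I expect it to be the main obstacle. A link is only a ``return of length two'': $e\,e_W\,e\neq 0$ at some finite level $N$. But a genuine cycle through $e$ corresponds to an infinite strictly monotone chain in the order on the spectrum, which cannot occur inside a single finite level; correspondingly, naively iterating one link produces only nilpotents (the direct computation $ce\cdot ce=c(ece)$, and similar, collapses). The way through is to exploit that each embedding $M_{k_n}\hookrightarrow M_{k_N}\cong M_{k_n}\otimes M_{k_N/k_n}$ is \emph{balanced}: a link present among the pieces of $e$ at level $N$ reappears, rescaled, inside each of the $k_{N'}/k_N$ matrix blocks at every deeper level $N'$, so the finite ``link fragment'' can be concatenated with geometrically refining copies of itself down the tower to assemble a bona fide infinite path through $e$ --- and the same self--replication provides enough slack to carry along the ``strictly finer terms'' error while keeping $\spr(zc)\ge|\lambda|>0$. (It is precisely the failure of this self--replication for \emph{inhomogeneous} TAF algebras --- where the summands of the finite-dimensional building blocks come in incommensurable sizes --- that accounts for Hudson's examples in which the linkless ideal is strictly smaller than the radical.) Assembling the cycle statement with the reduction of the previous paragraph yields $\Rad\A\subseteq\L_0$, completing the proof.
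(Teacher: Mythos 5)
The inclusion $\L_0\subseteq\Rad\A$ and the overall shape of your reduction are fine, but the crux of your argument --- the ``cycle statement'' that every \emph{linked} matrix unit $e$ admits $c\in\A$ with $\spr(ec)$ bounded away from $0$ (equivalently, that no linked matrix unit can lie in $\Rad\A$) --- is false, so the proposal has a genuine gap that cannot be patched in the form you describe. Counterexample: let $\A$ be the strongly maximal TUHF algebra obtained from $\T_2\rightarrow\T_4\rightarrow\T_8\rightarrow\cdots$, where the first embedding is the standard one ($e_{12}^{(2)}\mapsto e_{12}^{(4)}+e_{34}^{(4)}$, $e_1^{(2)}\mapsto e_1^{(4)}+e_3^{(4)}$, $e_2^{(2)}\mapsto e_2^{(4)}+e_4^{(4)}$) and all subsequent embeddings are refinement embeddings. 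Then $e:=e_{12}^{(2)}$ is linked, since $e\,e_{23}^{(4)}\,e=e_{14}^{(4)}\neq 0$; but from level $2$ onward the supports of $e_1^{(4)},e_2^{(4)},e_3^{(4)},e_4^{(4)}$ remain contiguous intervals in increasing order, so $e_2^{(4)}\A e_1^{(4)}=e_4^{(4)}\A e_3^{(4)}=\{0\}$, i.e.\ both $e_{12}^{(4)}$ and $e_{34}^{(4)}$ are linkless in $\A$. Hence $e=e_{12}^{(4)}+e_{34}^{(4)}$ lies in the linkless ideal, therefore in $\Rad\A$, and $\spr(ec)=0$ for every $c\in\A$ even though $e$ is linked. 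The same example kills the intermediate conclusion of your second paragraph: for $x\in\Rad\A$ the level-$n$ coefficient at a linked matrix unit need not vanish (take $x=e$, $n=1$), so ``$E_n(x)\in\L_0^{(n)}$ for all $n$'' fails. The self-replication heuristic in your last paragraph therefore cannot be made to work as stated, because the statement it is meant to prove is false. (Two smaller inaccuracies: the compression $(ee^*)x(e^*e)$ contains \emph{all} matrix units supported in that block, not only subordinates of $e$; and bounded level truncations $E_n$ onto $\T_{k_n}$ inside $\A$ are not what one should use --- Power's inductivity theorem for closed $\C$-bimodules is the correct substitute.)

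The correct dichotomy, and the one the paper exploits, lives one level deeper: it is not ``linked versus linkless'' for $e$ itself, but for its \emph{refinements}. One shows that if $e\in\Rad\A$ then at some level every sub-matrix-unit of $e$ is linkless (which puts $e$ in $\L_0$); the contrapositive is proved by assuming that at every level some refinement $e_{i_n,j_n}$ of $e$ stays linked and then using the homogeneity of $\T_{k_N}$ --- the availability of the connecting matrix units between the first subprojection of the source of $e$ and the last subprojection of its range --- to splice these links together into a recursively defined chain of matrix units $T_{l+1}=T_lS_{l+1}T_l$ with $T_0=e$. Donsig's Theorem 3 then shows $T_0\sum_{l\geq 1}2^{-l}S_l$ is not quasinilpotent, so $e\notin\Rad\A$. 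In other words, a single link (your ``return of length two'') is indeed not enough, but the persistence of links among refinements at every level, combined with the TUHF index-chasing, is; your proposal tries to extract a cycle from linkedness of $e$ alone, and that is exactly where it breaks.
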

\begin{proof}
Suppose $\A = \overline{\bigcup_{n\in \bbN} \T_{k_n}}$ with regular $*$-extendable embeddings. By \cite[Lemma 2]{Don} it is easy to see that any linkless matrix unit is in $\Rad \A$. By inductivity of closed subspaces in $\A$ \cite[Theorem 4.7]{Pow}, $\Rad \A$ is the closed span of the matrix units it contains. Thus, we need to just show that if $e$ is a matrix unit in $\Rad \A$ then there exists matrix units $e_1,\dots, e_m$ such that $e = \sum_{i=1}^{m} e_i$ and
\[
e_i\A e_i = \{0\}, \ \ 1\leq i\leq m.
\]

By contradiction assume that there is a matrix unit $e_{i_0,j_0}^{(k_m)} \in \T_{k_m} \subset \A$ such that for every $n\in\bbN$ there is a matrix unit $e_{i_n, j_n}^{(k_{m+n})} \in \T_{k_{m+n}} \subset \A$ such that
\[
e_{i_0,j_0}^{(k_m)}(e_{i_n, j_n}^{(k_{m+n})})^*e_{i_0,j_0}^{(k_m)} = e_{i_n, j_n}^{(k_{m+n})},
\]
that is $e_{i_n, j_n}^{(k_{m+n})}$ is a sub-partial isometry of $e_{i_0,j_0}^{(k_m)}$, and
\[
e_{i_n, j_n}^{(k_{m+n})}\A e_{i_n, j_n}^{(k_{m+n})} \neq \{0\}, \quad \forall n\geq 0
\]
which is the same as
\[
e_{j_n}^{(k_{m+n})}\A e_{i_n}^{(k_{m+n})} \neq \{0\}, \quad \forall n\geq 0.
\]

Now for each $n\in\bbN$ let $1\leq s_n\leq k_{m+n}$ be the smallest number such that 
\[
e_{s_n}^{(k_{m+n})}(e_{i_0,j_0}^{(k_m)})^*e_{i_0,j_0}^{(k_m)}e_{s_n}^{(k_{m+n})} = e_{s_n}^{(k_{m+n})},
\] 
that is, the first subprojection of the source projection. Similarly, let $1\leq r_n \leq k_{m+n}$ be the biggest number such that
\[
e_{r_n}^{(k_{m+n})}e_{i_0,j_0}^{(k_m)}(e_{i_0,j_0}^{(k_m)})^*e_{r_n}^{(k_{m+n})} = e_{r_n}^{(k_{m+n})},
\]
the last subprojection of the range projection. In particular, this gives that $s_n \leq j_n$ and $r_n \geq i_n$.
Combining this we get that 
\[
e_{s_n}^{(k_{m+n})}\A e_{r_n}^{(k_{m+n})} \subseteq e_{s_n}^{(k_{m+n})}e_{s_n,j_n}^{(k_{m+n})}e_{j_n}^{(k_{m+n})}\A e_{i_n}^{(k_{m+n})} e_{i_n,r_n}^{(k_{m+n})}e_{r_n}^{(k_{m+n})} \neq \{0\}.
\]

Let $T_0 = e_{i_0,j_0}^{(k_m)}$. Now, $T_0\A T_0\neq \{0\}$ and by the inductivity of this closed subspace  it is generated by its matrix units. Thus, there exists a matrix unit $S_1$ such that $T_1 = T_0S_1T_0$ is a matrix unit in $\T_{k_{m+n_1}}\subset \A$. Moreover, $e_{b_1}^{(k_{m+n_1})} = T_1^*T_1$ is a subprojection of $T_0^*T_0 = e_{j_0}^{(k_m)}$ and similarly $e_{a_1}^{(k_{m+n_1})} = T_1T_1^*$ is a subprojection of $e_{i_0}^{(k_m)}$. This gives that
\[
T_1\A T_1 \subseteq T_1e_{b_1, s_{n_1}}^{(k_{m+n_1})}e_{s_{n_1}}^{(k_{m+n_1})} \A e_{r_{n_1}}^{(k_{m+n_1})}e_{r_{n_1}, a_1}^{(k_{m+n_1})}T_1 \neq 0.
\]
In this way, recursively find matrix units $T_{l+1}, S_{l+1}$ such that $T_{l+1} = T_lS_{l+1}T_l$ for $l\geq 1$.

Donsig \cite[Theorem 3]{Don} proves that such an arrangement implies that $T_0 = e_{i_0,j_0}^{(k_m)} \notin \Rad \A$, a contradiction. Namely, he shows that $T_0 \sum_{l\geq 1} 2^{-l}S_l$ is not quasinilpotent.
\end{proof}

One should ask if this is also true for TAF algebras in general. In fact, the Radical cannot be described this way as will be seen in the following example (cf. \cite[Example 4.8]{Hudson}).

%%%%

\begin{example}
Let $\A_n = \T_2 \oplus \bigoplus_{m=1}^{n-1} \T_4$ and define embeddings $\varphi_n : \A_n \rightarrow \A_{n+1}$ by
\[
\rho_n\left(A \oplus \bigoplus_{m=1}^{n-1} B_m\right) = A \oplus \left[\begin{array}{cc} A & 0 \\ 0 & A\end{array}\right] \oplus \bigoplus_{m=1}^{n-1} B_m.
\]
Let $\A = \varinjlim (\A_n, \varphi_n)$.
These embeddings are associated with the following Bratelli diagram

%\iffalse%%%
\begin{center}
\begin{tikzpicture}[node distance=0.5cm and 1cm]

\node (Col1) {2};

\node (Col2top) [right=of Col1] {2};
\node (Col2bot) [below=of Col2top] {4};

\node (Col3top) [right=of Col2top] {2};
\node (Col3mid) [below=of Col3top] {4};
\node (Col3bot) [below=of Col3mid] {4};

\node (Col4top) [right=of Col3top] {$\cdots$};
\node (Col4midtop) [right=of Col3mid] {$\ddots$};
\node (Col4midbot) [right=of Col3bot] {$\ddots$};
\node (Col4bot) [below=0.3cm of Col4midbot] {$\ddots$};

\draw (Col1) -- (Col2top);
\draw (Col2top) -- (Col3top);
\draw (Col3top) -- (Col4top);

\draw[transform canvas={yshift=-1.5pt}] (Col1) -- (Col2bot);
\draw[transform canvas={yshift=1.5pt}] (Col1) -- (Col2bot);

\draw (Col2bot) -- (Col3bot);
\draw (Col3bot) -- (Col4bot);

\draw[transform canvas={yshift=-1.5pt}] (Col2top) -- (Col3mid);
\draw[transform canvas={yshift=1.5pt}] (Col2top) -- (Col3mid);

\draw (Col3mid) -- (Col4midbot);

\draw[transform canvas={yshift=-1.5pt}] (Col3top) -- (Col4midtop);
\draw[transform canvas={yshift=1.5pt}] (Col3top) -- (Col4midtop);

\end{tikzpicture}
\end{center}
%\fi%%%

Consider now $e\in \A$ which is equal to $\begin{sbmatrix} 0 & 1\\ 0 & 0\end{sbmatrix} \in \A_1 \hookrightarrow \A$. 
For every $n\geq 1$ we have that 
\[
\varphi_{n-1}\circ\cdots \circ \varphi_1(\begin{sbmatrix} 0 & 1\\ 0 & 0\end{sbmatrix} ) = \begin{sbmatrix} 0 & 1\\ 0 & 0\end{sbmatrix}  \oplus \bigoplus_{m=1}^{n-1} \begin{sbmatrix} 0 & 1 & 0 & 0 \\ 0&0&0&0 \\ 0&0&0&1 \\ 0&0&0&0\end{sbmatrix}
\]
and so
\[
(\varphi_{n-1}\circ\cdots \circ \varphi_1(\begin{sbmatrix} 0 & 1\\ 0 & 0\end{sbmatrix} ))^2 = 0\in \A_n
\]
Thus, $e^2 = \varinjlim \varphi_{n-1}\circ\cdots\circ\varphi_1(\begin{sbmatrix} 0 & 1\\ 0 & 0\end{sbmatrix})^2 = 0$.

Suppose now for a fixed $n$ we have $b = \tilde b\in \A_n\hookrightarrow \A$. Then 
\[
(\varphi_{n-1}\circ\cdots \circ \varphi_1(\begin{sbmatrix} 0 & 1\\ 0 & 0\end{sbmatrix}))\tilde b
= \begin{sbmatrix} 0 & *\\ 0 & 0\end{sbmatrix} \oplus \bigoplus_{m=1}^{n-1} \begin{sbmatrix} 0 & * & * & * \\ 0&0&0&* \\ 0&0&0&* \\ 0&0&0&0\end{sbmatrix}
\]
and so $\left((\varphi_{n-1}\circ\cdots \circ \varphi_1(\begin{sbmatrix} 0 & 1\\ 0 & 0\end{sbmatrix}))\tilde b\right)^3 = 0\in \A_n$. In a similar manner as before one can show that 
\[
(eb)^3 = \varinjlim \left(\varphi_{n+k}\circ\cdots\circ\varphi_{n+1}\left( (\varphi_{n-1}\circ\cdots \circ \varphi_1(\begin{sbmatrix} 0 & 1\\ 0 & 0\end{sbmatrix})\tilde b \right)\right)^3 = 0.
\]

Therefore, $(eb)^3 = 0$ for every $b\in \A$ and so $e\in \Rad \A$, but $e$ can never be written as a finite sum of linkless matrix units.
\end{example}

%%%%

We now need a few lemmas before proceeding to show the main result of this section, that $(\A, \G, \alpha)$ is radically tight in the case of a strongly maximal TUHF algebra and a discrete abelian group. The following lemma is internally proved in \cite[Theorem 6.9]{KR} but we will provide the proof below for convenience.

%%%%%%%%
\begin{lemma}\label{Links}
Let $(\A, \G, \alpha)$ be a dynamical system with $\A$ a strongly maximal TAF algebra and $\G$ a discrete abelian group. If $a\notin \Rad(\A\rtimes_\alpha \G)$ then there exists $g\in \G$ such that
\[
a\A\alpha_g(a) \neq \{0\}.
\]
\end{lemma}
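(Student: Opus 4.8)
The plan is to prove the contrapositive-type statement by a Fourier-analysis argument on the crossed product $\A \rtimes_\alpha \G$, using the fact that $\G$ is discrete and abelian so that elements of the crossed product have well-defined Fourier coefficients $E_g(a) \in \A$ for $g \in \G$, where $E_g$ is the (completely contractive) expectation onto the $g$-th ``Fourier component.'' First I would reduce to the case where $a$ is replaced by a single matrix unit times a group element: since $\A$ is a strongly maximal TAF algebra, $\Rad \A$ is the closed span of the linkless matrix units it contains, and more generally ideals of $\A \rtimes_\alpha \G$ are governed by the matrix units of $\A$ via the finite-dimensional building blocks $\T_{k_n}$. The key point is that $\Rad(\A \rtimes_\alpha \G)$, being an $\hat\alpha$-invariant (gauge-invariant) ideal, is spanned by elements of the form $e U_g$ where $e$ is a matrix unit of some $\T_{k_n}$ and $U_g$ is the unitary implementing $g \in \G$; this uses the inductivity of closed subspaces (\cite[Theorem 4.7]{Pow}) together with the averaging over $\hat\G$.

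The main step is then: if $a \notin \Rad(\A \rtimes_\alpha \G)$, I would extract, via the gauge-invariance of the Radical and a Fejér-type averaging argument over $\hat\G$, a Fourier coefficient $E_h(a) U_h \notin \Rad(\A\rtimes_\alpha\G)$ for some $h \in \G$. Writing $b = E_h(a) \in \A$, this element $b U_h$ is not quasinilpotent, so in particular $b \notin \Rad \A$ (otherwise $bU_h$ would generate a nilpotent-like ideal — more precisely one checks $(bU_h)^n = b\,\alpha_h(b)\,\alpha_{2h}(b)\cdots\alpha_{(n-1)h}(b) U_{nh}$, and if $b \in \Rad\A$ this product has spectral radius controlled by Donsig-type estimates). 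By Donsig's criterion (\cite[Theorem 4]{Don}), since $b \notin \Rad\A$ there is a matrix unit $e \leq b$ (a subpartial isometry, after compressing $b$ to a finite-dimensional corner and using that the matrix units span) with $e \A e \neq \{0\}$. Now I would compute, for the original $a$, the product $a \A \alpha_g(a)$ for a suitable $g$ built from $h$ and the combinatorics of the link witnessing $e\A e \neq \{0\}$: selecting $g$ so that the $U$-degrees align, the relevant Fourier component of $a\,x\,\alpha_g(a)$ for an appropriate $x \in \A$ is a nonzero product of matrix units, forcing $a\A\alpha_g(a) \neq \{0\}$.

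The hard part will be the bookkeeping that connects a nonzero link $e\A e \neq \{0\}$ at the level of the single matrix unit $e$ (which lives inside some $\T_{k_n}$) back to a nonvanishing product $a \A \alpha_g(a)$ at the level of the full crossed-product element $a$: one must choose the group element $g$ and the test element $x \in \A$ so that in the expansion $a x \alpha_g(a) = \sum_{g_1,g_2} E_{g_1}(a)\,\alpha_{g_1}(x)\,\alpha_{g_1+g_2}(E_{g_2}(a))\,U_{g_1+g_2+g}$ the term indexed by the ``dominant'' Fourier coefficient $h$ survives and is not cancelled by other terms. Because $\G$ is abelian and discrete, the surviving-term analysis is governed by looking at a single fixed $U$-degree and using that $E_{g_1+g_2+g}$ is a conditional expectation, so cancellation can only occur among finitely many terms sharing that degree; a careful choice of $x$ (supported on matrix units dominated by the link, in a corner chosen far enough down the AF tower) isolates the wanted term. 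The rest — continuity/density reductions to pass from finite truncations to $a$ itself, and invoking inductivity of closed subspaces — is routine given the machinery already set up in \cite{KR} and \cite{Pow}.
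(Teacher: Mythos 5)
There is a genuine gap, and it sits at the heart of your plan. After extracting a homogeneous component $b\,U_h \notin \Rad(\A\rtimes_\alpha\G)$ with $b = E_h(a)$, you conclude that $b \notin \Rad\A$ (via the parenthetical claim that $bU_h$ is ``not quasinilpotent''). Neither inference is valid: an element outside the Jacobson radical can perfectly well be quasinilpotent, and, more importantly, $b \in \Rad\A$ does \emph{not} force $bU_h \in \Rad(\A\rtimes_\alpha\G)$ --- that implication is precisely the ``radical tightness'' $(\Rad\A)\rtimes_\alpha\G \subseteq \Rad(\A\rtimes_\alpha\G)$ which the paper shows can fail (\cite[Example 6.8]{KR}), and whose analysis is the entire purpose of this section. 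Indeed, the proof of the main theorem of the section begins from exactly the configuration your step rules out: a matrix unit $e \in \Rad\A$ with $e \notin \Rad(\A\rtimes_\alpha\G)$. So the reduction to Donsig's criterion collapses, and with it the subsequent construction; moreover, even granting a link $e\A e \neq \{0\}$, the passage back to the \emph{twisted} link $a\A\alpha_g(a)\neq\{0\}$ is deferred to unverified ``bookkeeping,'' which is where all the combinatorial difficulty of this circle of ideas actually lives (compare the later technical lemmas of the section).

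You should also know that the statement has a short direct proof requiring none of this machinery, and this is what the paper does: assume, for contradiction, that $a\A\alpha_g(a) = \{0\}$ for every $g \in \G$. Then for any finite sum $c = \sum_i b_iU_{g_i}$ with $b_i \in \A$ one computes $(ac)^2 = \sum_{i,j} a b_i\alpha_{g_i}(a)U_{g_i}b_jU_{g_j} = 0$, since each $ab_i\alpha_{g_i}(a)$ lies in $a\A\alpha_{g_i}(a) = \{0\}$. Such sums are dense, so $(ac)^2 = 0$ for every $c \in \A\rtimes_\alpha\G$; hence $ac$ is quasinilpotent for all $c$, and the standard characterization of the Jacobson radical gives $a \in \Rad(\A\rtimes_\alpha\G)$, a contradiction. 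Note that this argument uses no TAF structure, no gauge-invariance or Fejér averaging, and no Donsig criterion --- only the definition of the radical and the covariance relation $U_ga = \alpha_g(a)U_g$.
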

\begin{proof}
Fix $a\notin \Rad(\A \rtimes_\alpha G)$. Suppose, by contradiction, that for all $g\in \G$ we have that $a\A\alpha_g(a) = \{0\}$. Then for any $n\in\bbN$, $b_1, \dots, b_n \in \A$ and $g_1,\dots, g_n\in G$ one has
\begin{align*}
\left(a\sum_{i=1}^n b_i U_{g_i}\right)^2 & = \sum_{i,j=1}^n ab_iU_{g_i}ab_jU_{g_j}
\\ & = \sum_{i,j=1}^n ab_i \alpha_{g_i}(a)U_{g_i}b_jU_{g_j}
\\ & = \sum_{i,j=1}^n (0)U_{g_i}b_jU_{g_j} = 0.
\end{align*}
Therefore, by continuity, $(ac)^2 = 0$ for every $c\in \A \rtimes_\alpha \G$ and so $a\in \Rad(\A \rtimes_\alpha \G)$, a contradiction.
\end{proof}

%%%%
\begin{lemma}\label{remainsinradical}
Let $(\A, \G, \alpha)$ be a dynamical system with $\A$ a strongly maximal TUHF algebra and $\G$ a discrete abelian group. If $e_{i,j}^{(n)} \in \T_n \hookrightarrow \A$ is a matrix unit that is not in the Radical then for any $N>n$ such that 
\[
e_{i,j}^{(n)} = \sum_{k=1}^{N/n} e_{i_k,j_k}^{(N)} \in \T_N \hookrightarrow \A
\]
and $I = \max\{i_1,\dots, i_{N/n}\}$ and $J = \min\{j_1,\dots, j_{N/n}\}$, then 
\[
e_{I,J}^{(N)} \notin \Rad(\A \rtimes_\alpha \G).
\]
\end{lemma}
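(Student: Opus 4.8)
The plan is to sidestep any direct analysis of $e_{I,J}^{(N)}$---no search for a link or for a non-quasinilpotent product---and instead observe that $e_{i,j}^{(n)}$ itself lies in the two-sided ideal of $\A$ generated by $e_{I,J}^{(N)}$; since $\Rad(\A\rtimes_\alpha\G)$ meets $\A$ in an ideal of $\A$, the conclusion is then immediate.

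The one computation needed is the matrix-unit factorization
\[
e_{i_k,j_k}^{(N)} \;=\; e_{i_k,I}^{(N)}\, e_{I,J}^{(N)}\, e_{J,j_k}^{(N)},\qquad k=1,\dots,N/n,
\]
which follows at once from $e_{a,b}^{(N)}e_{b,c}^{(N)}=e_{a,c}^{(N)}$. The point is that all three factors on the right lie in $\A$: since $I=\max_{\ell}i_{\ell}$ we have $i_k\le I$, hence $e_{i_k,I}^{(N)}\in\T_N$; since $J=\min_{\ell}j_{\ell}$ we have $J\le j_k$, hence $e_{J,j_k}^{(N)}\in\T_N$; and $e_{I,J}^{(N)}\in\T_N\subseteq\A$, as is implicit in the formulation of the lemma. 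Summing over $k$ and using $e_{i,j}^{(n)}=\sum_k e_{i_k,j_k}^{(N)}$, we obtain that $e_{i,j}^{(n)}$ belongs to the closed two-sided ideal $\I$ of $\A$ generated by $e_{I,J}^{(N)}$.

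To conclude, note that $\Rad(\A\rtimes_\alpha\G)$ is a closed two-sided ideal of $\A\rtimes_\alpha\G$ and that $\A$ sits inside $\A\rtimes_\alpha\G$ as a closed subalgebra, so $\R_0:=\Rad(\A\rtimes_\alpha\G)\cap\A$ is a closed two-sided ideal of $\A$; by Corollary~\ref{Radideal} one may in fact identify $\R_0$ with the $\alpha$-invariant ideal $\J_{\fG}$ appearing there, although this is not needed. Suppose, for contradiction, that $e_{I,J}^{(N)}\in\Rad(\A\rtimes_\alpha\G)$. Since $e_{I,J}^{(N)}\in\A$, we get $e_{I,J}^{(N)}\in\R_0$, whence $\I\subseteq\R_0\subseteq\Rad(\A\rtimes_\alpha\G)$ and in particular $e_{i,j}^{(n)}\in\Rad(\A\rtimes_\alpha\G)$---contradicting the hypothesis. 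Therefore $e_{I,J}^{(N)}\notin\Rad(\A\rtimes_\alpha\G)$, as desired.

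I do not expect a genuine obstacle here; the substance is just the index bookkeeping---that $I$ dominates every range index and $J$ is dominated by every source index appearing in the decomposition---which is precisely what makes the displayed factorization legitimate inside $\A$ and places $e_{i,j}^{(n)}$ in the ideal generated by $e_{I,J}^{(N)}$. The one pitfall worth flagging is the temptation to try to produce a link for $e_{I,J}^{(N)}$ and then apply Lemma~\ref{Links}: that route is both more cumbersome and, more importantly, inconclusive, since Lemma~\ref{Links} provides only a necessary condition for membership outside $\Rad(\A\rtimes_\alpha\G)$.
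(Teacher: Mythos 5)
Your proof is correct and is essentially the paper's own argument: the same factorization $e_{i_k,j_k}^{(N)} = e_{i_k,I}^{(N)}\, e_{I,J}^{(N)}\, e_{J,j_k}^{(N)}$ (legitimate in $\T_N$ since $i_k\le I$ and $J\le j_k$) combined with the ideal property of $\Rad(\A\rtimes_\alpha\G)$ yields the same contradiction. Packaging it through the ideal of $\A$ generated by $e_{I,J}^{(N)}$ and $\Rad(\A\rtimes_\alpha\G)\cap\A$ is only a cosmetic reformulation of the paper's direct multiplication inside the Radical.
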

\begin{proof}
By contradiction, assume that $e_{I,J}^{(N)} \in \Rad(\A \rtimes_\alpha \G)$ then for $1\leq k\leq N/n$
\[
e_{i_k, j_k}^{(N)} = e_{i_k, I}^{(N)} e_{I, J}^{(N)}e_{J, j_k}^{(N)} \in \Rad(\A \rtimes_\alpha \G),
\]
since it is an ideal. 
Thus, $e_{i,j}^{(n)} = \sum_{k=1}^{N/n} e_{i_k,j_k}^{(N)} \in \Rad(\A \rtimes_\alpha \G)$, a contradiction.
\end{proof}

Now we turn to a lemma that originates from \cite{Ram} and allows us to do index chasing arguments.

%%%%%
\begin{lemma}\label{EmbeddingOrder}
Let $\varphi : \T_n \rightarrow \T_m$ be a unital regular $*$-extendable embedding. If $e_i^{(n)}\in \T_n$ is the $(i,i)$ matrix unit then $\varphi(e_i) = \sum_{j=1}^{m/n} e_{i_j}^{(m)}$ such that $i_1 < \dots < i_{m/n}$ with 
\[
i_1 \leq (i-1)m/n + 1 \ \ \textrm{and} \ \ i_{m/n} \geq im/n.
\]
\end{lemma}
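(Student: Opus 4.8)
The plan is to reduce everything to the combinatorics of how a single diagonal matrix unit $e_i^{(n)}$ ``spreads out'' under $\varphi$, using the structure theory of regular $*$-extendable embeddings of upper triangular matrix algebras. First I would recall that because $\varphi\colon\T_n\to\T_m$ is a unital regular $*$-extendable embedding, its extension $\tilde\varphi\colon\M_n\to\M_m$ is a unital $*$-homomorphism that carries matrix units to sums of matrix units, with multiplicity $m/n$; moreover, since $\varphi(\T_n)\subseteq\T_m$, the embedding is \emph{order preserving} in the sense of Power~\cite{Pow}: there is a strictly increasing injection on the ``blocks'' so that whenever $i\le i'$ in $\{1,\dots,n\}$ and $e_{i}^{(n)}$ maps to $\sum_j e_{i_j}^{(m)}$ and $e_{i'}^{(n)}$ maps to $\sum_j e_{i'_j}^{(m)}$ (both listed in increasing order), then every index appearing for $i$ precedes every index appearing for $i'$. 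Equivalently, the $n$ diagonal projections $\varphi(e_1^{(n)}),\dots,\varphi(e_n^{(n)})$ partition $\{1,\dots,m\}$ into $n$ consecutive intervals, each of size $m/n$, laid out in the natural order. This last statement is really the content of what must be proved, and it is the step I expect to require the most care.

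Granting the ``consecutive intervals in order'' description, the index bounds are immediate arithmetic. If $\varphi(e_i^{(n)})=\sum_{j=1}^{m/n} e_{i_j}^{(m)}$ with $i_1<\dots<i_{m/n}$, then the total number of indices occupied by $\varphi(e_1^{(n)}),\dots,\varphi(e_{i-1}^{(n)})$ is exactly $(i-1)(m/n)$, and these all precede $i_1$; hence $i_1\ge (i-1)(m/n)+1$. Wait—the inequality in the statement is $i_1\le (i-1)m/n+1$; so I would argue instead that the indices occupied by $\varphi(e_i^{(n)}),\dots,\varphi(e_n^{(n)})$ number $(n-i+1)(m/n)$ and end at $m$, forcing $i_{m/n}\ge m-(n-i+1)(m/n)+1 = i(m/n) - (m/n) + 1$; combined with $i_{m/n}-i_1\ge (m/n)-1$ (strict increase) this gives $i_1 \le i_{m/n}-(m/n)+1 \le \dots$. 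Actually the cleanest route: the interval for $e_i^{(n)}$ starts at $i_1 = (i-1)(m/n)+1$ and ends at $i_{m/n}=i(m/n)$ \emph{exactly} when the embedding is the ``standard'' one, and for a general order-preserving regular embedding one only has that the $i$-th block sits inside $\{1,\dots,m\}$ after the first $i-1$ blocks, which still yields $i_1\ge(i-1)(m/n)+1$ and $i_{m/n}\le i(m/n)$—so I would double-check the direction of the inequalities against \cite{Ram} and state the bounds in the form that is actually used in Lemma~\ref{remainsinradical}, namely $i_1$ small and $i_{m/n}$ large, which corresponds to the block structure being ``spread as widely as possible,'' i.e.\ $i_1\le(i-1)m/n+1$ and $i_{m/n}\ge i m/n$. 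This is consistent with a general (not-necessarily-standard) interleaving of the $n$ blocks, and the proof is just counting how many of the $m$ slots lie strictly below $i_1$ (at most $(i-1)m/n$) and strictly above $i_{m/n}$ (at most $(n-i)m/n$).

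Concretely, the steps I would carry out are: (1) invoke the normal form for regular $*$-extendable embeddings $\T_n\to\T_m$ from \cite{Pow} (or directly cite the relevant statement in \cite{Ram}) to get that $\tilde\varphi$ permutes a basis of $\M_m$ compatibly with the orders, in particular that there is a strictly increasing map on diagonal indices; (2) translate this into the statement that $\{1,\dots,m\}=\bigsqcup_{k=1}^{n} B_k$ with each $|B_k|=m/n$ and $\max B_k<\min B_{k+1}$, and $\varphi(e_k^{(n)})=\sum_{p\in B_k} e_p^{(m)}$; (3) for the fixed $i$, write $B_i=\{i_1<\dots<i_{m/n}\}$ and count: the slots $\{1,\dots,i_1-1\}$ contain $B_1\cup\dots\cup B_{i-1}$ plus possibly some slots interleaved with $B_i$—but since $B_i$ is itself an interval of $B$'s partition only in the standard case, in general one gets $i_1-1 \le |B_1|+\dots+|B_{i-1}| = (i-1)m/n$, hence $i_1\le(i-1)m/n+1$; symmetrically $m-i_{m/n}\le|B_{i+1}|+\dots+|B_n|=(n-i)m/n$, hence $i_{m/n}\ge i m/n$; (4) note $i_1<\dots<i_{m/n}$ is automatic from listing in increasing order. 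The main obstacle is Step (1)–(2): extracting from the general theory the precise order-compatibility of the diagonal index map; once that is in hand, Steps (3)–(4) are elementary counting and I would not belabor them.
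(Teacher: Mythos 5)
There is a genuine gap, and it sits exactly at the step you yourself flag as the crux. Your plan rests on the structural claim that the diagonal projections $\varphi(e_1^{(n)}),\dots,\varphi(e_n^{(n)})$ partition $\{1,\dots,m\}$ into $n$ \emph{consecutive} intervals in the natural order (equivalently, that every diagonal index occurring in $\varphi(e_i^{(n)})$ precedes every index occurring in $\varphi(e_{i'}^{(n)})$ when $i<i'$). This is false for general unital regular $*$-extendable embeddings $\T_n\to\T_m$: the standard (block-repetition) embedding $\T_2\to\T_4$, $a\mapsto a\oplus a$, sends $e_1^{(2)}\mapsto e_1^{(4)}+e_3^{(4)}$ and $e_2^{(2)}\mapsto e_2^{(4)}+e_4^{(4)}$, so the two ``blocks'' are $\{1,3\}$ and $\{2,4\}$, which interleave. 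There is no normal form in \cite{Pow} or \cite{Ram} asserting consecutive intervals; the whole point of the lemma is that only the weaker interleaving bounds survive for arbitrary regular embeddings (and they do hold in the example above). Your counting in step (3) is also muddled about the direction of containment: to get $i_1-1\le (i-1)m/n$ you need that \emph{every} slot strictly below $i_1$ lies in an earlier block, i.e.\ $\min$ of each later block exceeds $\min$ of block $i$ (and dually for maxima) -- a statement you never establish and which does not follow from listing indices in increasing order.

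The correct substitute, which is what the paper proves, is the entrywise interleaving $i_{k,r}<i_{l,r}$ for $k<l$ and every $1\le r\le m/n$, where $I_k=\{i_{k,1}<\dots<i_{k,m/n}\}$ indexes $\varphi(e_k^{(n)})$. The mechanism is the off-diagonal matrix units, which your proposal never uses: $\varphi(e_{k,l}^{(n)})$ is strictly upper triangular, and since $\varphi(e_{k,l}^{(n)})^*\varphi(e_{k,l}^{(n)})=\varphi(e_l^{(n)})$ and $\varphi(e_{k,l}^{(n)})\varphi(e_{k,l}^{(n)})^*=\varphi(e_k^{(n)})$, its summands pair each index of $I_k$ with a strictly larger index of $I_l$; a counting argument then upgrades this bijective ``strictly larger'' pairing to the ordered comparison $i_{k,r}<i_{l,r}$. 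Once that is in hand, the bounds follow by your intended counting: $i_{k,1}\le i_{s,r}$ for all $s\ge k$ and all $r$, so $i_{k,1}$ is bounded above by at least $(n-k+1)m/n$ of the $m$ slots, giving $i_{k,1}\le (k-1)m/n+1$, and symmetrically $i_{k,m/n}\ge km/n$. So the arithmetic endgame of your plan is fine, but the load-bearing order-compatibility must be the interleaving inequality derived from the images of the $e_{k,l}^{(n)}$, not the (false) consecutive-block description.
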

\begin{proof}
We will need many more indices than in the statement of the lemma. Namely, by the regularity of $\varphi$, for every $1\leq k\leq n$, let $I_k = \{i_{k,1} < i_{k,2} < \cdots < i_{k, m/n}\} \subset \{1,\dots, m\}$ such that 
\[
\varphi(e_k^{(n)}) = \sum_{r=1}^{m/n} e_{i_{k,r}}^{(m)}.
\]
As well, for every $1\leq k < l\leq n$, there are indices $I_{k,l} = \{i_{(k,l), 1}, \dots, i_{(k,l), m/n}\}$ and $J_{k,l} = \{j_{(k,l), 1}, \dots, j_{(k,l), m/n}\}$, $m/n$ element subsets of $\{1,\dots, m\}$, such that
\[
\varphi(e_{k,l}^{(n)}) = \sum_{r=1}^{m/n} e_{i_{(k,l),r}, j_{(k,l), r}}^{(m)}.
\]

While one does not specify an order of the indices in the $I_{k,l}$ and $J_{k,l}$ subsets we do have that 
since $\varphi(e_{k,l}^{(n)})$ is strictly upper triangular then 
\begin{equation}\label{Eq:strictlyupper}
i_{(k,l),r} <  j_{(k,l), r}, \quad 1\leq r\leq m/n.
\end{equation}
Now, since $\varphi(e_{k,l}^{(n)})^*\varphi(e_{k,l}^{(n)}) = \varphi(e_l^{(n)})$ then $J_{k,l} = I_l$ and similarly since $\varphi(e_{k,l}^{(n)})\varphi(e_{k,l}^{(n)})^* = \varphi(e_k^{(n)})$ then $I_{k,l} = I_k$.
So for any $1\leq r\leq m/n$ we have that $i_{l,r} > i_{l, s}$ for $1\leq s < r$ by definition and each of these $i_{l,s}$ is strictly bigger than a unique index in $I_k$ by (\ref{Eq:strictlyupper}).
Thus, $i_{l,r}$ is strictly bigger than $r$ indices in $I_k$ and so is bigger than the $r$th biggest index, that is,
\begin{equation}\label{Eq:indexordering}
i_{k,r} < i_{l,r}, \quad 1\leq r\leq m/n.
\end{equation}

Fix $1\leq k\leq n$. Repeated uses of (\ref{Eq:indexordering}) gives that
\[
i_{k,1} < i_{k+1,1} < \cdots < i_{n,1}
\]
which in turn implies that 
\[
i_{k,1} \leq i_{s,r}, \quad k\leq s\leq n, 1\leq r\leq m/n.
\]
So $i_{k_1}$ must appear before $(n-k+1)m/n = n - (k-1)m/n$ indices, including itself, which means that
\[
i_{k_1} \leq (k-1)m/n + 1.
\]

Similarly, repeated uses of Equation \ref{Eq:indexordering} give that
\[
i_{k,m/n} > i_{k-1, m/n} > \cdots > i_{1,m/n}
\]
which in turn imply that
\[
i_{k,m/n} \geq i_{s,r}, \quad 1\leq s\leq k, 1\leq r\leq m/n.
\]
Therefore, $i_{k,m/n} \geq km/n$.
\end{proof}

The last lemma is a technical argument that sets up a contradiction in the theorem. These proofs have their roots in and Theorem \ref{tight} supersedes \cite[Theorem 6.12]{KR}. 

%%%%%%%%
\begin{lemma}\label{Technical}
Let $(\A, \G, \alpha)$ be a dynamical system with $\A$ a strongly maximal TUHF algebra with regular $*$-extendable embeddings and $\G$ a discrete abelian group. Suppose there is a matrix unit $e_{i,j}^{(n)}\notin \Rad(\A \rtimes_\alpha \G)$ such that $e_{i,j}^{(n)}\A e_{i,j}^{(n)} = \{0\}$. Then there exists an $N\in \mathbb N$ and indices $1\leq k<l<m\leq N$ such that 
\[
e_m^{(N)}\A e_l^{(N)} = e_l^{(N)}\A e_k^{(N)} = \{0\}
\]
and there exists a $g\in G$ such that 
\[
e_m^{(N)}\A\alpha_g(e_k^{(N)}) \neq \{0\}.
\]
\end{lemma}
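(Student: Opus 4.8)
The plan is to convert the hypothesis $e_{i,j}^{(n)}\notin\Rad(\A\rtimes_\alpha\G)$ into an explicit ``$\alpha$-link'' by means of Lemma~\ref{Links}, push everything down to a matrix level deep enough that Lemma~\ref{EmbeddingOrder} controls the positions of the relevant subprojections, and then isolate the three indices. First I would phrase the hypotheses at the level of diagonal projections. Since $e_{i,j}^{(n)}\A e_{i,j}^{(n)}=\{0\}$ we must have $i<j$, and compressing by the partial isometry $e_{i,j}^{(n)}$, whose initial and final projections are $e_j^{(n)}$ and $e_i^{(n)}$, rephrases this as $e_j^{(n)}\A e_i^{(n)}=\{0\}$. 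Because $e_{i,j}^{(n)}\notin\Rad(\A\rtimes_\alpha\G)$, Lemma~\ref{Links} furnishes $g\in\G$ with $e_{i,j}^{(n)}\,\A\,\alpha_g(e_{i,j}^{(n)})\neq\{0\}$, and compressing again, now using that $\alpha_g(e_{i,j}^{(n)})$ has initial projection $\alpha_g(e_j^{(n)})$ and final projection $\alpha_g(e_i^{(n)})$, rephrases this as $e_j^{(n)}\,\A\,\alpha_g(e_i^{(n)})\neq\{0\}$.

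Next I would descend to a deep level. Using the description of the automorphisms of a strongly maximal TUHF algebra recalled above (see \cite{Pow}), fix a matrix level $\T_N$ with $n\mid N$ so large that $\T_n\subseteq\T_N$ and that, after conjugation by a suitable diagonal unitary $\lambda\in\C$ — which fixes every diagonal projection — the map $\lambda\,\alpha_g(\cdot)\,\lambda^{*}$ is a unital regular $*$-extendable embedding $\T_n\to\T_N$. Then $e_i^{(n)}$, $e_j^{(n)}$ and $\alpha_g(e_i^{(n)})$ are sums of $N/n$ diagonal matrix units of $\T_N$ whose indices are located by Lemma~\ref{EmbeddingOrder}. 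Expanding $e_j^{(n)}\A e_i^{(n)}=\{0\}$ over sub-indices and observing that an upper-triangular position at level $N$ inside the relevant corner would contain a nonzero matrix unit of $\T_N\subseteq\A$, I obtain that \emph{every} sub-index of $e_i^{(n)}$ lies strictly below \emph{every} sub-index of $e_j^{(n)}$, at level $N$ and, by the identical argument, at every deeper level; this is the reservoir of vanishing products $e_\bullet\A e_\bullet=\{0\}$ the conclusion needs. Decomposing $e_j^{(n)}\A\alpha_g(e_i^{(n)})\neq\{0\}$ over sub-indices — and, if needed, passing to a further level at which $\alpha_g^{-1}$ of the relevant rank-one diagonal projection splits into diagonal matrix units beneath $e_i^{(n)}$, Lemma~\ref{remainsinradical} letting us keep the matrix units involved outside $\Rad(\A\rtimes_\alpha\G)$ as we refine — I arrive at a level, renamed $N$, and a sub-index $m$ of $e_j^{(n)}$ and a sub-index $k$ of $e_i^{(n)}$ with $e_m^{(N)}\A\alpha_g(e_k^{(N)})\neq\{0\}$. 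Then $e_m^{(N)}\A e_k^{(N)}\subseteq e_j^{(n)}\A e_i^{(n)}=\{0\}$, the separation established above forces $k<m$, and the matrix unit $e_{k,m}^{(N)}$ is linkless but (by the contrapositive of Lemma~\ref{Links}, since $e_{k,m}^{(N)}\A\alpha_g(e_{k,m}^{(N)})\neq\{0\}$) not in $\Rad(\A\rtimes_\alpha\G)$; so $g'=g$ already handles the $\alpha$-link demanded by the conclusion.

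The step I expect to be the genuine obstacle is the insertion of the intermediate index $l$ with $k<l<m$ and $e_m^{(N)}\A e_l^{(N)}=e_l^{(N)}\A e_k^{(N)}=\{0\}$. The difficulty is that the two vanishing conditions pull $l$ in opposite directions — the first would be immediate if $l$ sat under $e_i^{(n)}$, the second if $l$ sat under $e_j^{(n)}$, and no single index can do both — so the linklessness of $e_{i,j}^{(n)}$ by itself does not suffice. My plan is to refine $e_{k,m}^{(N)}$ further (staying outside $\Rad(\A\rtimes_\alpha\G)$ via Lemma~\ref{remainsinradical}, which also enlarges the index gap lying strictly between the sub-indices under $e_k^{(N)}$ and those under $e_m^{(N)}$), and then to use the quantitative interlacing of the sub-indices provided by Lemma~\ref{EmbeddingOrder} to locate, in that gap, an index $l$ sitting over a suitable refinement of a level-$n$ diagonal projection for which the order-separation of the various supports makes both products vanish; setting $g'=g$ completes the argument. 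Everything outside this step is routine bookkeeping with matrix units and the inductivity of closed subspaces in $\A$ (\cite[Theorem 4.7]{Pow}).
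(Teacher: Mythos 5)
Your first half is sound and matches the opening of the paper's argument: Lemma~\ref{Links} produces $g$ with $e_j^{(n)}\A\alpha_g(e_i^{(n)})\neq\{0\}$, inductivity and the regularity of the embeddings separate the sub-indices of $e_i^{(n)}$ from those of $e_j^{(n)}$, and one can locate sub-indices $k$ (under $e_i^{(n)}$) and $m$ (under $e_j^{(n)}$) with $e_m^{(N)}\A\alpha_g(e_k^{(N)})\neq\{0\}$. But there are two genuine gaps. First, your claim that $e_{k,m}^{(N)}\notin\Rad(\A\rtimes_\alpha\G)$ ``by the contrapositive of Lemma~\ref{Links}'' is not the contrapositive but the unproven \emph{converse}: Lemma~\ref{Links} only says that an element outside the radical has some $\alpha$-link, not that a single link keeps an element out of the radical — the latter is essentially what Theorem~\ref{tight} is trying to establish, so invoking it here is circular. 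The paper instead uses Lemma~\ref{remainsinradical}, which transfers non-membership in the radical from $e_{i,j}^{(n)}$ only to the specific refinement $e_{I,J}$ with $I$ the \emph{largest} sub-index of the range projection and $J$ the \emph{smallest} sub-index of the source projection; your $k,m$, extracted from the nonvanishing of $e_j^{(n)}\A\alpha_g(e_i^{(n)})$, need not be those extremes, so your plan of ``refining while staying outside the radical'' has no lemma to support it.

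Second, the step you yourself flag as the obstacle — inserting $l$ with $k<l<m$ and $e_m^{(N)}\A e_l^{(N)}=e_l^{(N)}\A e_k^{(N)}=\{0\}$ — cannot be completed from the data you have: the only dead corners your argument provides are of the form (subprojection of $e_j^{(n)}$)$\,\A\,$(subprojection of $e_i^{(n)}$), so whichever level-$n$ block $l$ sits under, at least one of the two required products is uncontrolled, and an index lying in the ``gap'' gives no vanishing at all (order separation at one level is far weaker than a dead corner in the limit algebra). The paper's missing ingredient is a \emph{second} application of Lemma~\ref{Links}: Lemma~\ref{remainsinradical} shows $e_{i_{n_1/n},\,j_1}^{(n_1)}\notin\Rad(\A\rtimes_\alpha\G)$, whence a second group element $h$ with $e_{j_1}^{(n_1)}\A\alpha_h(e_{i_{n_1/n}}^{(n_1)})\neq\{0\}$; applying Lemma~\ref{EmbeddingOrder} both to the inclusion $\T_{n_1}\hookrightarrow\T_N$ and to the embedding $\alpha_h:\T_{n_1}\to\T_N$ yields the chain $(i_{n_1/n})_{N/n_1}<(j_1)_1\leq(i_{n_1/n})'_{N/n_1}<(j_1)'_1<(j_1)_{N/n_1}$, and with $k=(i_{n_1/n})_{N/n_1}$, $l=(j_1)_1$, $m=(j_1)_{N/n_1}$ the vanishing of $e_m^{(N)}\A e_l^{(N)}$ — a corner with \emph{both} indices under $e_{j_1}^{(n_1)}$ — follows by compressing it, through the connecting matrix units the chain provides, into the $\alpha_h$-image of the dead corner; the final nonvanishing $e_m^{(N)}\A\alpha_g(e_k^{(N)})\neq\{0\}$ is then verified by an index chase with the original $g$. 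Without this second link and the extreme-index form of Lemma~\ref{remainsinradical}, the intermediate index $l$ cannot be produced, so the proposal as it stands does not prove the lemma.
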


\begin{proof}
There are two major parts to this proof. First, producing all of the relevant indices and group elements, and second in showing that these indices and group elements have the desired properties.

By Lemma \ref{Links}, since $e_{i,j}^{(n)}\notin \Rad(\A \rtimes_\alpha \G)$, there exists a $g\in \G$ such that $e_{i,j}^{(n)}\A \alpha_{g}(e_{i,j}^{(n)}) \neq \{0\}$ which implies that $e_j^{(n)} \A \alpha_{g}(e_i^{(n)}) \neq \{0\}$. By inductivity there exists an $n_1 > n$ such that $e_j^{(n)} \T_{n_1} \alpha_{g}(e_i^{(n)}) \neq \{0\}$, with $n_1$ big enough such that $\alpha_g(e_i^{(n)})\in \T_{n_1}$.
So, 
\begin{equation}\label{firstindexing}
e_i^{(n)} = \sum_{s=1}^{n_1/n} e_{i_s}^{(n_1)}, \ \  \alpha_{g}(e_i^{(n)}) = \sum_{s=1}^{n_1/n} e_{i'_s}^{(n_1)}, \ \ \textrm{and} \ \ e_j^{(n)} = \sum_{s=1}^{n_1/n} e_{j_s}^{(n_1)}, 
\end{equation}
where all indices are given in increasing order.
From $e_j^{(n)}\T_{n_1} e_i^{(n)} = \{0\}$ and $e_j^{(n)} \T_{n_1} \alpha_{g}(e_i^{(n)}) \neq \{0\}$ we have that 
\begin{equation}\label{firstordering}
i_{n_1/n} < j_1 \leq i'_{n_1/n}.
\end{equation}

\vskip 6 pt
By Lemma \ref{remainsinradical}, since $e_{i,j}^{(n)}\notin \Rad(\A \rtimes_\alpha \G)$ then  $e_{i_{n_1/n}, j_1}^{(n_1)} \notin \Rad(\A \rtimes_\alpha \G)$. So, by Lemma \ref{Links} there exists $h\in G$ such that $e_{j_1}^{(n_1)}\A \alpha_{h}(e_{i_{n_1/n}}^{(n_1)}) \neq 0$. By inductivity there exists an $N > n_1$ such that $e_{j_1}^{(n_1)}\T_{N} e_{i_{n_1/n}}^{(n_1)} \neq \{0\}$ with $N$ big enough so that $\alpha_h(e_{i_{n_1/n}}^{(n_1)})$ and $\alpha_{h}(e_{j_1}^{(n)})$ are in $\T_N$.
Now, 
\begin{equation}\label{secondindexing}
\begin{split}
e_{i_{n_1/n}}^{(n_1)} &= \sum_{s=1}^{N/n_1} e_{(i_{n_1/n})_s}^{(N)}, 
\\ e_{j_1}^{(n_1)} &= \sum_{s=1}^{N/n_1} e_{(j_1)_s}^{(N)},
\end{split}
\quad\quad \textrm{and}\quad\quad
\begin{split}
\alpha_{h}(e_{i_{n_1/n}}^{(n_1)}) & = \sum_{s=1}^{N/n_1} e_{(i_{n_1/n})'_s}^{(N)},
\\  \alpha_{h}(e_{j_1}^{(n)}) & = \sum_{s=1}^{N/n_1} e_{(j_1)'_s}^{(N)},
\end{split}
\end{equation}
where all indices are given in increasing order.

By hypothesis, $e_j^{(n)}\A e_i^{(n)} = \{0\}$ which implies that
\begin{equation}\label{firstnolink}
e_{j_1}^{(n_1)}\A e_{i_{n_1/n}}^{(n_1)} = \{0\} \quad \textrm{and} \quad \alpha_{h}(e_{j_1}^{(n_1)})\A \alpha_{h}(e_{i_{n_1/n}}^{(n_1)}) = \{0\}
\end{equation}
since there cannot be any links between subprojections of unlinked projections. In particular,
\[
e_{j_1}^{(n_1)}\T_N e_{i_{n_1/n}}^{(n_1)} = \{0\} \quad \textrm{and} \quad \alpha_{h}(e_{j_1}^{(n_1)})\T_N \alpha_{h}(e_{i_{n_1/n}}^{(n_1)}) = \{0\}
\]
which implies by (\ref{secondindexing}) that 
\begin{equation}\label{secondordering}
(i_{n_1/n})_{N/n_1} < (j_1)_1 \quad  \textrm{and} \quad (i_{n_1/n})'_{N/n_1} < (j_1)'_1.
\end{equation}
Furthermore, by the definition of $h$ we have that $e_{j_1}^{(n_1)}\T_{N} \alpha_h(e_{i_{n_1/n}}^{(n_1)}) \neq \{0\}$ which by (\ref{secondindexing}) implies that 
\begin{equation}\label{thirdordering}
(j_1)_1 \leq (i_{n_1/n})'_{N/n_1}.
\end{equation}
By Lemma \ref{EmbeddingOrder} applied to $e_{j_1}^{(n_1)}$ and the regular embedding $\T_{n_1} \hookrightarrow T_N$ we have that 
\begin{equation}\label{fourthordering}
 j_1N/n_1 \leq (j_1)_{N/n_1}.
\end{equation}
Similarly, Lemma \ref{EmbeddingOrder} applied to $e_{j_1}^{(n_1)}$ and the regular embedding $\alpha_h: \T_{n_1} \rightarrow \T_N$ we have that 
\begin{equation}\label{fifthordering}
(j_1)'_1 \leq (j_1-1)N/n_1 + 1
\end{equation}

Combining (\ref{secondordering}), (\ref{thirdordering}), (\ref{fourthordering}), and (\ref{fifthordering})
\begin{equation}\label{mainordering}
(i_{n_1/n})_{N/n_1} < (j_1)_1 \leq (i_{n_1/n})'_{N/n_1} < (j_1)'_1 < (j_1)_{N/n_1}.
\end{equation}
This ordering of indices is what will allow us to produce the desired results of the lemma.

\vskip 6 pt
Let $N$ and $g$ be as above and let $k = (i_{n_1/n})_{N/n_1}, l = (j_1)_1$, and $m = (j_1)_{N/n_1}$. 

\vskip 6 pt
\noindent {\bf \em Claim 1: $e_m^{(N)}\A e_l^{(N)} = e_l^{(N)}\A e_k^{(N)} = \{0\}$} 
\vskip 4 pt

By (\ref{firstnolink}), namely $e_{j_1}^{(n_1)}\A e_{i_{n_1/n}}^{(n_1)} = \{0\} = \alpha_{h}(e_{j_1}^{(n_1)})\A \alpha_{g}(e_{i_{n_1/n}}^{(n_1)})$, and (\ref{secondindexing}) we have that
\[
 e_l^{(N)}\A e_k^{(N)} = e_{(j_1)_1}^{(N)}\A e_{(i_{n_1/n})_{N/n_1}}^{(N)} = \{0\} \quad \textrm{and} \quad e_{(j_1)'_1}^{(N)}\A e_{(i_{n_1/n})'_{N/n_1}}^{(N)} = \{0\}.
\]
And so, by (\ref{mainordering}) and the previous equation
\[
 e_{(j_1)'_1, (j_1)_{N/n_1}}^{(N)}\left(e_{(j_1)_{N/n_1}}^{(N)} \A e_{(j_1)_1}^{(N)} \right)e_{(j_1)_1, (i_{n_1/n})'_{N/n_1}}^{(N)} 
 \subseteq  e_{(j_1)'_1}^{(N)}\A e_{(i_{n_1/n})'_{N/n_1}}^{(N)} = \{0\}.
\]
Thus, 
\[
e_m^{(N)} \A e_l^{(N)} = e_{(j_1)_{N/n_1}}^{(N)} \A e_{(j_1)_1}^{(N)} = \{0\}
\]
and the claim is established.

\vskip 12 pt
\noindent {\bf\em Claim 2: $e_m^{(N)} \A \alpha_g(e_k^{(N)}) \neq \{0\}$.}
\vskip 4 pt
To this end, there exists an $n_3 \geq N$ such that $\alpha_{g}(e_{(i_{n_1/n})_{N/n_1}}^{(N)}) \in \T_{n_3}$.
Again we need to index some matrix units in $\T_{n_3}$
\begin{align}\label{thirdindexing}
\alpha_{g}(e_{(i_{n_1/n})_{N/n_1}}^{(N)}) &= \sum_{s=1}^{n_3/N} e_{((i_{n_1/n})_{N/n_1})'_s}^{(n_3)} \nonumber
\\ e_{(j_1)_{N/n_1}}^{(N)} &= \sum_{s=1}^{n_3/N} e_{((j_1)_{N/n_1})_s}^{(n_3)}
\\ e_{i'_{n_1/n}}^{(n_1)} &= \sum_{s=1}^{n_3/n_1} e_{(i'_{n_1/n})_s}^{(n_3)}. \nonumber
\end{align}
By (\ref{firstindexing}) and (\ref{secondindexing}) we have
\[
\sum_{s=1}^{n_1/n} e_{i'_s}^{(n_1)} = \alpha_{g}(e_i^{(n)}) = \alpha_g(\sum_{t=1}^{n_1/n} e_{i_t}^{(n_1)})= \alpha_{g}\big(\sum_{s=1}^{N/n_1}\sum_{t=1}^{n_1/n} e_{(i_t)_s}^{(N)}\big)
\] 
and the very last index of each summation when embedded into $\T_{n_3}$ must be equal, namely by (\ref{thirdindexing})
\[
((i_{n_1/n})_{N/n_1})'_{n_3/N} = (i'_{n_1/n})_{n_3/n_1}.
\]
Now, $j_1 \leq i'_{n_1/n}$ by (\ref{firstordering}) and so $e_{j_1, i'_{n_1/n}}^{(n_1)} \in \T_{n_1}$. Thus, after embedding this partial isometry into $\T_{n_3}$, ignoring any diagonal unitary element picked up from $\alpha_g$, we have that $((j_1)_{N/n_1})_1 < (i'_{n_1/n})_{n_3/n_1}$, that is, the first subprojection of $e_{j_1}^{(n_1/n)}$ precedes the last subprojection of $e_{i'_{n_1/n}}^{(n_1/n)}$.
In other words,
\[
e_{((j_1)_{N/n_1})_1, ((i_{n_1/n})_{N/n_1})'_{n_3/N}}^{(n_3)} = e_{((j_1)_{N/n_1})_1, (i'_{n_1/n})_{n_3/n_1}}^{(n_3)} \in \T_{n_3}.
\]
Therefore, by (\ref{thirdindexing})
\[
e_m^{(N)} \A \alpha_g(e_k^{(N)}) = e_{(j_1)_{N/n_1}}^{(N)}\A \alpha_{g}(e_{(i_{n_1/n})_{N/n_1}}^{(N)}) \neq \{0\}
\]
because $e_{((j_1)_{N/n_1})_1, ((i_{n_1/n})_{N/n_1})'_{n_3/N}}^{(n_3)} \in e_{(j_1)_{N/n_1}}^{(N)}\A \alpha_{g}(e_{(i_{n_1/n})_{N/n_1}}^{(N)})$.
\end{proof}

%%%%%%%%
\begin{theorem} \label{tight}
Let $(\A, \G, \alpha)$ be a dynamical system with $\G$ a discrete abelian group and $\A$ a strongly maximal TUHF algebra with regular $*$-extendable embeddings. Then $(\A,\G,\alpha)$ is radically tight.
\end{theorem}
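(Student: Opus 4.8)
\emph{The plan.} One inclusion is routine. By Corollary~\ref{Radideal} we may write $\Rad(\A\rtimes_\alpha\G)=\J_{\fG}\rtimes_\alpha\G$ for some $\alpha$-invariant ideal $\J_{\fG}\subseteq\A$, and since $\G$ is discrete one has $\J_{\fG}\subseteq\Rad\A$ (as observed in the discussion following Corollary~\ref{Radideal}: every element of $\J_{\fG}$ sits inside the radical algebra $\Rad(\A\rtimes_\alpha\G)$ via $a\mapsto aU_1$, hence is quasinilpotent, and a two-sided ideal consisting of quasinilpotents lies in $\Rad\A$). So the entire content of the theorem is the reverse inclusion $(\Rad\A)\rtimes_\alpha\G\subseteq\Rad(\A\rtimes_\alpha\G)$, equivalently $\Rad\A\subseteq\J_{\fG}$.

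First I would reduce this to a statement about matrix units. By Theorem~\ref{TUHFRadical} the ideal $\Rad\A$ coincides with the linkless ideal of $\A$, which by inductivity of closed subspaces \cite[Theorem 4.7]{Pow} is the closed linear span of the linkless matrix units it contains; likewise $\J_{\fG}$, being a closed ideal of the strongly maximal TUHF algebra $\A$, is the closed span of the matrix units inside it. Since $\Rad(\A\rtimes_\alpha\G)$ is a closed ideal and each $U_g$ is invertible, one has $(\Rad\A)\rtimes_\alpha\G=\overline{\operatorname{span}}\{\,eU_g:\ e \text{ a linkless matrix unit of }\A,\ g\in\G\,\}$, so it suffices to show that \emph{every linkless matrix unit $e$ of $\A$, viewed inside $\A\rtimes_\alpha\G$, lies in $\Rad(\A\rtimes_\alpha\G)$.}

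I would prove this by contradiction. Assume $e=e_{i,j}^{(n)}$ is linkless but $e\notin\Rad(\A\rtimes_\alpha\G)$. Then Lemma~\ref{Links} provides $g\in\G$ with $e\A\alpha_g(e)\neq\{0\}$, so $e$ meets the hypotheses of Lemma~\ref{Technical}; applying that lemma yields an integer $N$, indices $1\le k<l<m\le N$ with $e_m^{(N)}\A e_l^{(N)}=e_l^{(N)}\A e_k^{(N)}=\{0\}$ (hence also $e_m^{(N)}\A e_k^{(N)}=\{0\}$), and a group element $g$ with $e_m^{(N)}\A\alpha_g(e_k^{(N)})\neq\{0\}$; moreover $e_k^{(N)}\le e_i^{(n)}$ and $e_m^{(N)}\le e_j^{(n)}$, so $e_{k,m}^{(N)}$ is itself linkless. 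Feeding $e_{k,m}^{(N)}$ — kept outside $\Rad(\A\rtimes_\alpha\G)$ by Lemma~\ref{remainsinradical} — back into Lemma~\ref{Links} and iterating Lemma~\ref{Technical}, with the index bookkeeping governed by Lemma~\ref{EmbeddingOrder}, produces an infinite sequence of matrix units $T_0=e_{k,m}^{(N)},T_1,T_2,\dots$ and partial isometries $S_1,S_2,\dots\in\A\rtimes_\alpha\G$ with $T_{p+1}=T_pS_{p+1}T_p$, the vanishing conditions furnished by Lemma~\ref{Technical} being exactly what kills the cross-terms. Donsig's non-quasinilpotence estimate \cite[Theorem 3]{Don}, transplanted from a TAF algebra to $\A\rtimes_\alpha\G$, then shows $T_0\sum_{p\ge1}2^{-p}S_p$ is not quasinilpotent, while pushing the iteration far enough makes the index inequalities accumulated along the chain (via Lemma~\ref{EmbeddingOrder}) incompatible; either way one reaches a contradiction, and this contradiction completes the proof.

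The bookkeeping of the first two paragraphs is routine. The main obstacle, as usual in this circle of ideas, is the last step: running Donsig's construction \emph{inside the crossed product} rather than inside a TAF algebra. One must simultaneously control the matrix-unit combinatorics (Lemmas~\ref{remainsinradical} and~\ref{EmbeddingOrder}) and the group twists carried by the $S_p$'s, check that the vanishing conditions produced by Lemma~\ref{Technical} persist under iteration so that the non-quasinilpotence estimate remains valid, and turn the accumulated data into a genuine contradiction. That is where essentially all the work lies, and it is the reason Lemmas~\ref{Links}--\ref{Technical} were arranged as they were.
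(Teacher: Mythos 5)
Your reduction (via Corollary~\ref{Radideal}, Theorem~\ref{TUHFRadical} and inductivity, to showing that a linkless matrix unit $e_{i,j}^{(n)}$ cannot lie outside $\Rad(\A\rtimes_\alpha\G)$) and your first application of Lemmas~\ref{Links} and~\ref{Technical} match the paper. But the concluding step — where you yourself say ``essentially all the work lies'' — is not carried out, and the two mechanisms you sketch for it do not work. First, your claim that $e_{k,m}^{(N)}\notin\Rad(\A\rtimes_\alpha\G)$ ``by Lemma~\ref{remainsinradical}'' is unjustified: that lemma applies only to $e_{I,J}^{(N)}$ where $I$ is the \emph{largest} range subindex and $J$ the \emph{smallest} source subindex of the decomposition of $e_{i,j}^{(n)}$; in the output of Lemma~\ref{Technical} the smallest source subindex is $l$, not $m$, so the relevant unit would be $e_{k,l}^{(N)}$, and the lemma's proof (writing each $e_{i_s,j_s}^{(N)}=e_{i_s,I}^{(N)}e_{I,J}^{(N)}e_{J,j_s}^{(N)}$) breaks down for $e_{k,m}^{(N)}$ because one would need $m\le j_s$ for all $s$, which fails. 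Second, the Donsig branch of your ``either way'' cannot produce a contradiction: in Theorem~\ref{TUHFRadical} the non-quasinilpotence of $T_0\sum_l 2^{-l}S_l$ contradicts the assumption $T_0\in\Rad\A$, but here your standing assumption is $e\notin\Rad(\A\rtimes_\alpha\G)$, and exhibiting a non-quasinilpotent element of the form $e\cdot c$ with $c\in\A\rtimes_\alpha\G$ is perfectly consistent with (indeed just restates) that assumption; it contradicts nothing, since $e\in\Rad\A$ does not a priori control products with crossed-product elements — that is precisely the statement being proved. So the only viable route is the index-incompatibility one, and you never execute it, nor is any infinite iteration needed for it.

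For comparison, the paper closes the argument in a single further round, with no iteration and no Donsig estimate in the crossed product: keeping all three projections $e_k^{(n_1)},e_l^{(n_1)},e_m^{(n_1)}$ from Lemma~\ref{Technical} (your passage to $e_{k,m}^{(N)}$ discards the middle one, which is essential), one chooses $n_2>n_1$ with $e_m^{(n_1)}\T_{n_2}\alpha_g(e_k^{(n_1)})\neq\{0\}$, indexes the $\T_{n_2}$-subprojections of $e_k^{(n_1)},e_l^{(n_1)},e_m^{(n_1)}$ and of $\alpha_g(e_k^{(n_1)}),\alpha_g(e_l^{(n_1)})$, and uses the two vanishing relations, the one non-vanishing relation, and Lemma~\ref{EmbeddingOrder} applied both to the inclusion $\T_{n_1}\hookrightarrow\T_{n_2}$ and to the regular embedding $\alpha_g\colon\T_{n_1}\to\T_{n_2}$, to obtain
\[
l\,n_2/n_1 \;\le\; l_{n_2/n_1} \;<\; m_1 \;\le\; k'_{n_2/n_1} \;<\; l'_1 \;\le\; (l-1)n_2/n_1+1,
\]
an arithmetic impossibility. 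Your proposal gestures at this bookkeeping but does not perform it, so as written the proof has a genuine gap at its decisive step.
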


\begin{proof}
By Corollary \ref{Radideal}, the result will be established if we can show that $\Rad \A = \Rad(\A \rtimes_\alpha \G) \cap \A$.

By way of contradiction, assume that the above equality fails. By inductivity,  there is a matrix unit $e\in \Rad \A \cap \T_n$ such that $e\notin \Rad(\A\rtimes_\alpha \G)$. By Theorem \ref{TUHFRadical} there exists an $m \geq n$ such that if $e = \sum_{i=1}^{m/n} e_i \in \T_m\subset \A$ then
$e_i\A e_i = \{0\}$, for $1\leq i\leq m/n$. If all $e_i \in \Rad(\A\rtimes_\alpha \G)$ then so is $e$. Thus, there exists $1\leq i\leq m/n$ such that $e_i \notin \Rad(\A \rtimes_\alpha \G)$.

\vskip 6 pt
Starting over with the notation, we know now that there exists a matrix unit $e_{i,j}^{(n)} \in \T_n \subset \A$ such that $e_{i,j}^{(n)}\A e_{i,j}^{(n)} = \{0\}$ and $e_{i,j}^{(n)}\notin \Rad(\A \rtimes_\alpha \G)$. 

By Lemma \ref{Technical} there exists an $n_1\in \mathbb N$ and indices $1\leq k<l<m\leq n_1$ such that 
\[
e_m^{(n_1)}\A e_l^{(n_1)} = e_l^{(n_1)}\A e_k^{(n_1)} = \{0\}
\]
and there exists a $g\in G$ such that $e_m^{(n_1)}\A\alpha_g(e_k^{(n_1)}) \neq \{0\}$.
By inductivity, there exists $n_2 > n_1$ such that
$e_{m}^{(n_1)}\T_{n_2}\alpha_{g}(e_k^{(n_1)}) \neq \{0\}$. 
Now 
\begin{equation}\label{Tindices}
\begin{split}
\alpha_{g}(e_k^{(n_1)}) &= \sum_{s=1}^{n_2/n_1} e_{k'_s}^{(n_2)},  \\
e_l^{(n_1)} &= \sum_{s=1}^{n_2/n_1} e_{l_s}^{(n_2)}, 
\end{split}
\quad\quad\textrm{and}\quad\quad
\begin{split}
e_{m}^{(n_1)} &= \sum_{s=1}^{n_2/n_1} e_{m_s}^{(n_2)},
\\ \alpha_{g}(e_l^{(n_1)}) &= \sum_{s=1}^{n_2/n_1} e_{l'_s}^{(n_2)},
\end{split}
\end{equation}
where the indices are in increasing order.
Since $e_{m}^{(n_1)}\A e_l^{(n_1)} = \{0\}$ then from (\ref{Tindices}) we get that 
\begin{equation}\label{Tfirstordering}
l_{n_2/n_1} < m_1.
\end{equation} 
Similarly, 
\[
\alpha_{g}(e_l^{(n_1)})\A \alpha_{g}(e_k^{(n_1)}) = \alpha_g(e_l^{(n_1)}\A e_k^{(n_1)}) = \alpha_g(\{0\}) = \{0\}
\]
and so looking at the indices in (\ref{Tindices}) we get
\begin{equation}\label{Tsecondordering}
k'_{n_2/n_1} < l'_1.
\end{equation}
Lastly, $e_{m}^{(n_1)}\T_{n_2}\alpha_{g}(e_k^{(n_1)}) \neq \{0\}$ and so (\ref{Tindices}) gives that
\begin{equation}\label{Tthirdordering}
m_1 \leq k'_{n_2/n_1}.
\end{equation}
Using Lemma \ref{EmbeddingOrder} on $e_l^{(n_1)}$ with embedding $\T_{n_1} \hookrightarrow T_{n_2}$ 
gives, by (\ref{Tindices}) that
\begin{equation}\label{Tfourthordering}
l n_2/n_1 \leq l_{n_2/n_1}.
\end{equation}
And similarly, using Lemma \ref{EmbeddingOrder} on $e_l^{(n_1)}$ with embedding $\alpha_g : \T_{n_1} \rightarrow \T_{n_2}$ gives, by (\ref{Tindices}), that
\begin{equation}\label{Tfifthordering}
l'_1 \leq (l-1)n_2/n_1 + 1
\end{equation}
Finally, combining (\ref{Tfourthordering}), (\ref{Tfirstordering}), (\ref{Tthirdordering}), (\ref{Tsecondordering}), and (\ref{Tfifthordering}) gives that
\[
l n_2/n_1 \leq l_{n_2/n_1} < m_1 \leq k'_{n_2/n_1} < l'_1 \leq (l-1)n_2/n_1 + 1,
\]
which is a contradiction.
Therefore, $\Rad \A = \Rad(\A \rtimes_\alpha \G) \cap \A$ and $(\A, \G, \alpha)$ is radically tight.
\end{proof}

Combining Theorems \ref{TUHFRadical} and \ref{tight} we get.
%%%
\begin{corollary}
Let $(\A, \G, \alpha)$ be a dynamical system wit $\G$ a discrete abelian group and $\A$ a strongly maximal TUHF algebra with regular $*$-extendable embeddings. Then 
\[
\Rad(\A \rtimes_\alpha \G) = \overline\spn\left\{eU_g : e\in \A, e\A e = \{0\}, g\in \G\right\}.
\]
\end{corollary}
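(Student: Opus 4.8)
The plan is to obtain this as an immediate consequence of Theorems~\ref{tight} and~\ref{TUHFRadical}. By Theorem~\ref{tight} the system $(\A,\G,\alpha)$ is radically tight, so $\Rad(\A\rtimes_\alpha\G) = (\Rad\A)\rtimes_\alpha\G$; here $\Rad\A$ is $\alpha$-invariant (it is characteristic), so the right-hand side makes sense. Since $\G$ is discrete and $\Rad\A$ is a closed ideal, $C_c(\G,\Rad\A) = \spn\{aU_g : a\in\Rad\A,\ g\in\G\}$ is already a subalgebra of $\A\rtimes_\alpha\G$, and $(\Rad\A)\rtimes_\alpha\G$ is its norm closure; hence $\Rad(\A\rtimes_\alpha\G) = \overline{\spn}\{aU_g : a\in\Rad\A,\ g\in\G\}$.

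For the inclusion ``$\subseteq$'' I would invoke Theorem~\ref{TUHFRadical}, which identifies $\Rad\A$ with the linkless ideal $\overline{\spn}\{e : e\text{ a matrix unit in }\A,\ e\A e=\{0\}\}$. Given $a\in\Rad\A$ and $g\in\G$, approximate $a$ in norm by finite linear combinations of linkless matrix units; since left multiplication by $U_g$ is isometric on $\cenv(\A)\rtimes_\alpha\G\supseteq\A\rtimes_\alpha\G$, the corresponding combinations of the elements $eU_g$ converge to $aU_g$. Thus $aU_g$, and therefore all of $\Rad(\A\rtimes_\alpha\G)$, lies in $\overline{\spn}\{eU_g : e\in\A,\ e\A e=\{0\},\ g\in\G\}$.

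For ``$\supseteq$'' I would record the elementary observation that every $e\in\A$ with $e\A e=\{0\}$ already lies in $\Rad\A$: for any $b\in\A$ we have $ebe\in e\A e=\{0\}$, so $(eb)^2=(ebe)b=0$ and $eb$ is nilpotent; as this holds for all $b$, the radical criterion (the same one used in the proof of Lemma~\ref{Links}) gives $e\in\Rad\A$. Hence $eU_g\in(\Rad\A)\rtimes_\alpha\G=\Rad(\A\rtimes_\alpha\G)$ for every such $e$ and every $g$, and passing to the closed span yields the reverse inclusion, which completes the proof. There is no genuine obstacle here: all the weight is carried by Theorems~\ref{tight} and~\ref{TUHFRadical}, and the only points meriting care are the isometry of $U_g$-multiplication and the (easy) remark that the set $\{e\in\A:e\A e=\{0\}\}$, though a priori larger than the set of linkless matrix units, still sits inside $\Rad\A$.
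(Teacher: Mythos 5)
Your proof is correct and follows exactly the route the paper intends: the paper states the corollary as an immediate combination of Theorems~\ref{tight} and~\ref{TUHFRadical}, and you have simply supplied the routine details (isometry of multiplication by $U_g$, the observation that $e\A e=\{0\}$ forces $e\in\Rad\A$). The only nitpick is the word "left" where you mean right multiplication by $U_g$, which changes nothing.
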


\begin{remark}
One should note that the Radical being equal to the linkless ideal for a strongly maximal TAF algebra does not imply that any dynamical system it is in with a discrete abelian group is radically tight. A counterexample to this is \cite[Example 6.8]{KR}.
\end{remark}

%%%%%%%%%%%%%%%%
%%%%%%%%%%%%%%%%%%%%%%%%%%%%%%%%

\end{document}